\newtheorem{theorem}{Theorem}[section]
\newtheorem{lemma}[theorem]{Lemma}
\newtheorem{corollary}[theorem]{Corollary}
\theoremstyle{definition}
\newtheorem{definition}[theorem]{Definition}
\theoremstyle{remark}
\newtheorem{remark}[theorem]{Remark}
\numberwithin{equation}{section}
\begin{document}

\title[Rectifiability of line defects]{Rectifiability of line defects in liquid crystals with variable degree of orientation}

\author{Onur Alper}
\address{Courant Institute of Mathematical Sciences, \newline \indent 251 Mercer Street, New York, NY 10012, USA}
\email{alper@cims.nyu.edu}

\subjclass[2000]{58E15, 58E20, 49Q20, 76A15}

%\date{...}

%\dedicatory{...}

%\keywords{... }

\begin{abstract}
In \cite{AHL} Hardt, Lin and the author proved that 
the defect set of minimizers of the modified Ericksen energy for nematic liquid crystals
consists locally of a finite union of isolated points and H\"older continuous curves with finitely many crossings.
In this article, we show that each H\"older continuous curve in the defect set is of finite length. Hence, locally the defect set is rectifiable. 
For the most part, the proof follows the work of De Lellis, Marchese, Spadaro and Valtorta \cite{DLMSV} on harmonic $\mathcal{Q}$-valued maps closely.
The blow-up analysis in \cite{AHL} allows us to simplify the covering arguments in \cite{DLMSV} and locally estimate the length of line defects in a geometric fashion.
\end{abstract}

\maketitle

\section{Introduction}

\subsection{The Singularities of Energy-Minimizing Maps}
The structure of singularities of energy-minimizing harmonic maps has been studied for almost thirty years.
In \cite{HardtLin90} Hardt and Lin proved that the singular set of energy minimizing maps from $\mathbf{B}^4$ to $\mathbf{S}^2$ consists locally of
a finite union of isolated points and H\"older continuous curves with finitely many crossings.
An important ingredient in their proof was Reifenberg's Topological Disk Theorem, which was introduced in \cite{Reifenberg60}.
In \cite{Simon95} L. Simon proved that when the target is an analytic Riemannian manifold, the singular set of an energy minimizing maps on an $n$-dimensional domain is locally
the union of a finite, pairwise disjoint collection of $(n-3)$-rectifiable locally compact sets. 
In \cite{NV} Naber and Valtorta not only proved the rectifiability of singular sets of energy-minimizing and stationary harmonic maps into smooth Riemannian manifolds,
but they also introduced a variety of novel techniques, which they utilized to estimate the Minkowski content of singular sets.
In particular, they formulated new versions of the classical Reifenberg Theorem, involving the Jones $\beta_2$-number, originally introduced in \cite{Jones}, 
and yielding stronger results such as measure bounds.
Most recently, in \cite{DLMSV} De Lellis, Marchese, Spadaro and Valtorta combined the techniques in \cite{NV} with
a refined version of a monotonicity estimate introduced by G. Weiss in \cite{Weiss}, as well as a related frequency pinching estimate.
As a result, they proved that the singular set of energy-minimizing harmonic $\mathcal{Q}$-valued maps on an $n$-dimensional domain is countably $(n-2)$-rectifiable.
In fact, they also gave upper bounds for the $(n-2)$-dimensional Minkowski content of the subset of singular points with highest multiplicity.
See also \cite{KW} for earlier rectifiability results for the singular set of harmonic $\mathcal{Q}$-valued maps.

In \cite{AHL} Hardt, Lin and the author considered the structure of defect set of energy-minimizing harmonic maps into 
positively curved, single-sheeted cones over the projective plane $\mathbf{RP}^2$.
The motivation for studying maps into this singular target comes from the Ericksen model for nematic liquid crystals. 
In \cite{Ericksen91} Ericksen introduced an energy for director fields with variable degree of orientation, 
in order to allow finite-energy configurations with line defects, which are observed experimentally. 
In the {\emph{one-constant approximation}} regime \eqref{OneConstantApprox}, 
Lin related the minimizers of this energy to energy-minimizing maps into cones over $\mathbf{S}^2$ in \cite{Lin89}, \cite{Lin91}.
In \cite{HardtLin93} Hardt and Lin refined the dimension reduction result in \cite{Lin89}, \cite{Lin91} 
by classifying homogeneous minimizers of two variables, and introduced the modified Ericksen
energy for maps into positively curved, single-sheeted cones over $\mathbf{RP}^2$, for which energy-minimizing maps admit line defects as desired.
Building upon the classification result in \cite{HardtLin93}, 
it was proved in \cite{AHL} that under the assumption \eqref{OneConstantApprox}, the defect set of minimizers of the modified Ericksen energy consists locally of
a finite union of isolated points and H\"older continuous curves with finitely many crossings.

In this article we adopt the approach in \cite{DLMSV} to strengthen the main result in \cite{AHL}.
Assuming \eqref{OneConstantApprox} once again, we consider the minimizers of the modified Ericksen energy in a domain $\Omega$.
We prove that each H\"{o}lder continuous curve in the defect set of minimizers has locally finite length. 
Hence, combining this result with the above-mentioned structure theorem in \cite{AHL}, we arrive at the following conclusion: 
Under the assumption \eqref{OneConstantApprox}, locally the defect set is rectifiable.
Namely, the intersection of the defect set and any compact subset of $\Omega$ is the union of finitely many Lipschitz curves and a set of $\mathcal{H}^1$-measure zero.

\subsection{The Ericksen Model} \label{IntroModel}
We give a brief introduction to the Ericksen model in the context of equilibrium theory of liquid crystals.
Our main motivation here is to emphasize that the modified Ericksen model is the simplest equilibrium model admitting energy-minimizing configurations with line defects, 
which are experimentally observed.
We refer to \cite{Lin89} and the references therein for further details on the origins of various models we introduce below.

Let $\mu_x$ be a probability measure on $\mathbf{S}^2$ for the direction of a symmetric, elongated liquid crystal molecule at a given point $x$ 
in a spatial domain $\Omega \subset \mathbf{R}^3$.
The lack of polarity implies that $\int_{\mathbf{S}^2} p \, \mathrm{d} \mu_x(p) = 0$, 
while the anisotropy of the liquid crystal sample in $\Omega$ is captured by its second moment, $M_2(x) = \int_{\mathbf{S}^2} p \otimes p \, \mathrm{d} \mu_x(p)$.
In the uniaxial regime $M_2 - \frac{1}{3} id$ has two equal eigenvalues, and we can write:
$$
M_2 - \frac{1}{3} id = s \left [ (n \otimes n ) - \frac{1}{3} id \right ],
$$
where $| n | = 1$, $s \in \left [ -1/2, 1 \right ]$.

Assuming $s$ to be constant, the Oseen-Frank energy is defined as $\int_{\Omega} W(n)) \, \mathrm{d}x$, where:
\begin{equation*}
 W(n) = \kappa_1 | \mathrm{div} n |^2 + \kappa_2 | n \cdot \mathrm{curl} n |^2 + \kappa_3 | n \times \mathrm{curl} n |^2 
+ \left ( \kappa_2 + \kappa_4 \right ) \left [ \mathrm{tr} ( \nabla n )^2 - ( \mathrm{div}n )^2 \right ],
\end{equation*}
$\kappa_1$, $\kappa_2$, $\kappa_3 > 0$. 
The Oseen-Frank model is phenomenological, and $W(n)$ is the most general integrand depending on $n$ and $\nabla n$, while
satisfying at most quadratic dependence on $\nabla n$, as well as invariance under rotations and reflections of the director field $n$, cf. \cite[Section 2]{Lin89}.
Hardt, Kinderlehrer and Lin showed in \cite{HKL86} that the singular set of $n \, :  \, \Omega \to \mathbf{S}^2$ minimizing 
the Oseen-Frank energy has Hausdorff dimension strictly less than $1$.
As a result, the minimizers of Oseen-Frank energy cannot have line defects.

In \cite{Ericksen91} and \cite{Maddocks}, the Ericksen energy $\int_{\Omega} X(s,n)) \, \mathrm{d}x$ is introduced, where:
\begin{equation}
X(s,n) = s^2 W(n) + \kappa_5 | \nabla s|^2 + \kappa_6 | \nabla s \cdot n |^2 + \psi(s), \label{FullEricksen}
\end{equation}
and $\psi$ is a $C^2$-potential that satisfies: 
\begin{enumerate}[(i)]
 \item $\lim_{s \to -1/2} \psi(s) = + \infty$,
 \item $\lim_{s \to 1} \psi(s) = + \infty$,
 \item $\psi'(0) = 0$,
 \item $\psi$ has a minimum at some $s_* \in (0,1)$,  
\end{enumerate}
and serves to confine $s$ to the range $(-1/2,1)$. See also \cite[Fig.1]{Maddocks}. We refer to \cite{Ericksen91} and \cite{Maddocks} for 
a discussion of the Ericksen model in full generality.

In the {\emph{one-constant approximation}} regime, we assume: 
\begin{equation}
\kappa_1 = \kappa_2 = \kappa_3 = 1, \quad \kappa_4 = \kappa_6 = 0, \quad \kappa_5 = \kappa,
\label{OneConstantApprox}
\end{equation}
Under the assumption \eqref{OneConstantApprox}, the Ericksen energy reduces to:
\begin{equation}
\int_{\Omega} \left [ \kappa |\nabla s|^2 + s^2 | \nabla n |^2 + \psi(s)  \right ] \, \mathrm{d}x.
\label{Ericksen}
\end{equation}
In \cite{Lin89}, \cite{Lin91}, Lin proved the following result: there exists a map $u = \left ( u_1, u_2 \right ) \, : \, \mathbf{\Omega}\to \mathbf{C}_\kappa$ minimizing 
$$
\int_{\Omega} \left [ | \nabla u |^2 + \psi \left ( \kappa^{-1/2} | u | \right ) \right ] \, \mathrm{d}x, 
$$
with respect to Dirichlet boundary data $g \in H^{1/2} \left ( \Omega \right )$, where:
$$
\mathbf{C}_\kappa = \left \{ (z,y) \in \mathbf{R} \times \mathbf{R}^3 \, : \, z = \sqrt{\kappa-1} |y| \right \}, \quad \kappa > 1;
$$
the minimizer $u$ is locally H\"older continuous in $\Omega$; and the Hausdorff dimension of $u^{-1} \{ 0 \}$ is at most $1$.
(These existence and regularity results remain valid for a double-cone over $\mathbf{S}^2$, corresponding to $s \in ( -1/2, 1 )$, or in the case $\kappa <1$.
However, since minimizers can have wall defects in these cases, cf. \cite{AmbrosioVirga91}, \cite{HardtLin93}, we restrict our attention to the case $\kappa > 1$, $s \geq 0$.)
Away from the set $u^{-1} \{ 0 \}$, we can define the pair $s = \kappa^{-1/2} |u|$, $n = \sqrt{\kappa} |u|^{-1} u_2$, which minimizes \eqref{Ericksen}.
Moreover, the singular set of director field $n$, $\mathrm{sing}(n) = s^{-1} \{ 0 \} = u^{-1} \{ 0 \}$, which we call the zero set or the {\emph{defect set}}. 
Finally, it is standard to show that $u$, and consequently $s$ and $n$, are smooth away from $u^{-1} \{ 0 \}$.
In other words, the problem of analyzing the minimizers of \eqref{Ericksen}, where $\kappa > 1$ and $s \geq 0$, 
can be recast as the problem of analyzing energy-minimizing harmonic maps (in the presence of a lower order perturbation term) into the cone $\mathbf{C}_\kappa$. 

In \cite{HardtLin93} Hardt and Lin proved that when the target for energy-minimizing $u$ is $\mathbf{C}_\kappa$, $u^{-1} \{ 0 \}$ is in fact locally discrete. 
(In particular, energy-minimizing maps into $\mathbf{C}_\kappa$ do not admit line defects.)
They also introduced the modified Ericksen model by replacing the target $\mathbf{C}_\kappa$ with its projectivised version $\mathbf{D}_\kappa$. 
That is, for $[y] = \left \{ y, -y \right \}$, the sign equivalence class for $y \in \mathbf{R}^3$: 
$$
\mathbf{D}_\kappa = \left \{ \left ( z, [y] \right ) \, : \, (z,y) \in \mathbf{C}_\kappa \right \}, \quad \kappa > 1.
$$
While the results in \cite{Lin89}, \cite{Lin91} hold for the modified Ericksen model as well,
the modified Ericksen model has the additional remarkable feature that it admits minimizing configurations with line defects, cf. \cite{HardtLin93}, \cite{AHL}. 
See also \cite{BZ} for a comprehensive discussion on modeling liquid crystals with line fields and its relation to understanding defects in $Q$-tensor theory. 

Finally, we remark that in \cite{LinPoon} the existence and regularity of minimizers of the Ericksen energy without the {\emph{one-constant} approximation} \eqref{OneConstantApprox} are established.
However, it is an open problem whether the estimates on the Hausdorff dimension of the singular set proved under the assumption \eqref{OneConstantApprox} 
in \cite{Lin89}, \cite{Lin91} and \cite{HardtLin93} 
are valid for general material constants $\kappa_i$, $i=1,2,...,6$. 
We point out that this problem is analogous to the following question: does the singular set of Oseen-Frank minimizers consist locally of isolated points?
Once again, under the assumption \eqref{OneConstantApprox}, the Oseen-Frank energy reduces to the Dirichlet energy for maps into $\mathbf{S}^2$ or $\mathbf{RP}^2$. 
Therefore, the celebrated result of Schoen and Uhlenbeck \cite{SU82} on energy-minimizing harmonics maps, 
which crucially relies on the monotonicity of renormalized Dirichlet energy and Federer's dimension reduction principle, gives a positive answer to this question.
Without the assumption \eqref{OneConstantApprox}, 
discovering a monotone quantity for the minimizers of Oseen-Frank energy and constructing a dimension reduction argument is a challenging open problem.
(Nevertheless, we point out the result in \cite{HKL86} that 
the singular set of $n \, :  \, \Omega \to \mathbf{S}^2$ minimizing the Oseen-Frank energy has Hausdorff dimension strictly less than $1$.)
In short, the monotonicity-based techniques in \cite{SU82}, \cite{Simon95}, \cite{NV}, \cite{DLMSV} and \cite{AHL} are not applicable to the Oseen-Frank or Ericksen models 
without the assumption \eqref{OneConstantApprox} at this stage. Hence, in this article we assume \eqref{OneConstantApprox} hereafter.

\subsection{The Main Result} \label{IntroResult}
As in \cite{AHL}, for simplicity we consider energy-minimizing harmonic maps $u \, : \, \Omega \to \mathbf{D}_\kappa$, where $\Omega \subset \mathbf{R}^3$.
We analyze the local structure of $u^{-1} \{ 0 \}$ in full generality in the following theorem, which is a strengthened version of \cite[Theorem 5.2]{AHL}: 

\begin{theorem} \label{LocalFiniteLength}
Let $u \, : \, \Omega \to \mathbf{D}_\kappa$ be an energy-minimizing harmonic map, where $\Omega \subset \mathbf{R}^3$.
Then each point $b \in u^{-1} \{ 0 \}$ has an open neighborhood $\mathcal{O}$ so that
$ \left ( \mathcal{O} \backslash \{ b \} \right ) \cap u^{-1} \{ 0 \}$ consists of 
an even number of disjoint, embedded H\"older continuous arcs $\Gamma_1, \Gamma_2, ..., \Gamma_{2m}$,
joining $\{b\}$ to a point in $\partial \mathcal{O}$, where $2m \leq K_0 \left ( N \left ( b; 0^+ \right ) \right )$, 
and for $1 \leq i < j \leq 2m$, $r$ sufficiently small and $d_0 = d_0 \left ( N \left ( b; 0^+ \right ) \right )$,
\begin{equation*}
\mathrm{dist} \left ( \Gamma_i \cap \partial B_r(a) , \Gamma_j \cap \partial B_r (a) \right ) \geq \frac{1}{2} d_0 r.
\end{equation*}
Moreover, $\mathcal{H}^1 \left ( \Gamma_i \right ) < L = L \left ( N \left ( b; 0^+ \right ), \mathrm{diam}(\mathcal{O}) \right )$ for each $i = 1, 2, ..., 2m$. 
\end{theorem}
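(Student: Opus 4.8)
The plan is to quote the qualitative conclusions of the statement — the existence, parity, and quantitative separation of the embedded H\"older arcs $\Gamma_1,\dots,\Gamma_{2m}$ joining $b$ to $\partial\mathcal{O}$ — verbatim from \cite[Theorem~5.2]{AHL}, so that the only new assertion to be proved is the length bound $\mathcal{H}^1(\Gamma_i)<L$. To establish it I would run a Reifenberg-type argument in the spirit of \cite{NV} and \cite{DLMSV}, but exploiting the arc structure already in hand together with the blow-up classification of \cite{HardtLin93}, \cite{AHL} to replace the delicate covering arguments of \cite{DLMSV} by an elementary decomposition into dyadic annuli. The standing tools are: (i) the Weiss-type almost-monotonicity of the perturbed frequency $N(x;\cdot)$ from \cite{AHL}, by which $r\mapsto N(x;r)$ is monotone up to an $O(r)$ error coming from $\psi(s)$, so that its total variation on $(0,r_0]$ is at most $N(x;0^+)-N_{\mathrm{min}}+\Lambda r_0$, where $N_{\mathrm{min}}>0$ is the least frequency attained on $u^{-1}\{0\}$; and (ii) upper semicontinuity of $x\mapsto N(x;0^+)$, so that on a sufficiently small neighborhood of $b$ one has $N(x;0^+)\le N(b;0^+)+\delta$ for all $x\in u^{-1}\{0\}$.

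The heart of the matter is a quantitative cone-splitting estimate. Using compactness of energy-minimizers, the Weiss monotonicity, and the classification of two-variable homogeneous minimizers of \cite{HardtLin93} (as invoked in \cite{AHL}), I claim: every tangent map at a defect point has defect set a line, and for every $\eta>0$ there are $\epsilon>0$ and $C\ge 2$ such that if $x\in u^{-1}\{0\}$ and $N(x;Cr)-N(x;C^{-1}r)<\epsilon$, then, after rescaling, $u$ is $L^2(B_r(x))$-close to a homogeneous-about-$x$ minimizer $\varphi$ whose defect set $\varphi^{-1}\{0\}$ is a single line $\ell\ni x$. Since $|\varphi|$ is $1$-homogeneous and bounded below away from any cone about $\ell$, the uniform local H\"older estimates of \cite{Lin89} then force $u^{-1}\{0\}\cap B_{r/2}(x)$, hence $\Gamma_i\cap B_{r/2}(x)$, into the $\eta r$-tube about $\ell$. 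Tracking the best-approximating line as the scale is halved — this is the one genuinely technical point, reproducing the frequency-pinching estimate of \cite{NV}, \cite{DLMSV} in our setting — upgrades this to the linear bound: denoting by $\beta_i(x,r)$ the normalized $L^\infty$-flatness of $\Gamma_i\cap B_r(x)$, one obtains $\beta_i(x,r)^2\lesssim N(x;Cr)-N(x;C^{-1}r)$.

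Granting this, the length bound follows geometrically. Fix $r_0 \le \mathrm{diam}(\mathcal{O})$ small enough that $B_{r_0}(b)\subset\mathcal{O}$, and set $A_k=B_{2^{-k}r_0}(b)\setminus B_{2^{-k-1}r_0}(b)$. Applying the cone-splitting at the single point $b$ and scales $2^{-k}r_0$, the arc $\Gamma_i\cap A_k$ lies in a tube of radius $\beta_k\,2^{-k}r_0$ about a line $\ell_k\ni b$ with $\beta_k^2\lesssim N(b;2^{-k+1}r_0)-N(b;2^{-k-1}r_0)$; since the quantitative separation of \cite[Theorem~5.2]{AHL} pins $\Gamma_i$ to one ray of the tangent cone at $b$, the set $\Gamma_i\cap A_k$ is in fact a Lipschitz graph over a fixed ray $\ell_b$ of slope $O(\beta_k)$, so
\begin{equation*}
\mathcal{H}^1\!\left(\Gamma_i\cap A_k\right)\ \le\ \bigl(1+C\beta_k^{2}\bigr)\,2^{-k-1}r_0 .
\end{equation*}
Summing over $k$, the leading terms give a geometric series summing to $r_0$, and the error is controlled by $\sum_k C\beta_k^2\,2^{-k-1}r_0\le C r_0\sum_k\beta_k^2\lesssim r_0\bigl(N(b;0^+)+\Lambda r_0\bigr)$, which is finite since the decrements $N(b;2^{-k+1}r_0)-N(b;2^{-k-1}r_0)$ telescope against the total variation of $N(b;\cdot)$. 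The part of $\Gamma_i$ outside a small ball $B_\rho(b)$ is a compact embedded arc, every point of which is a defect point of frequency at least $N_{\mathrm{min}}$; covering it by finitely many balls on each of which the cone-splitting makes $\Gamma_i$ a Lipschitz graph of slope $<1$ bounds its length by a constant depending on $\rho$ and the covering number, the latter controlled through $N(b;0^+)$ by the separation estimate. Adding the two contributions yields $\mathcal{H}^1(\Gamma_i)<L(N(b;0^+),\mathrm{diam}(\mathcal{O}))$.

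The principal obstacle is the quantitative cone-splitting of the second paragraph, specifically the passage from the qualitative statement ``small frequency decrement $\Rightarrow$ nearly conical with a one-dimensional defect'' to the \emph{square-summable} flatness bound $\beta_i(x,r)^2\lesssim N(x;Cr)-N(x;C^{-1}r)$. This requires a sharp Weiss-type almost-monotonicity in which the potential $\psi(s)$ is absorbed as an $O(r)$ perturbation — admissible at small scales — together with a frequency-pinching estimate bounding the drift of the approximating axis by the square root of the frequency decrement, exactly as in \cite{NV}, \cite{DLMSV} but adapted to the singular target $\mathbf{D}_\kappa$. Once this is available, the connectedness of the arcs and the separation estimate of \cite{AHL} make the remaining covering completely elementary, which is the promised simplification over \cite{DLMSV}.
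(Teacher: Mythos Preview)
Your reduction to the length bound and your instinct to iterate over dyadic annuli centered at $b$ are both sound, and your quantitative cone--splitting $\beta_i(x,r)^2\lesssim N(x;Cr)-N(x;C^{-1}r)$ is exactly the paper's distortion bound (Lemma~\ref{BetaEstimate}) in $L^\infty$ disguise. The gap is the sentence ``the set $\Gamma_i\cap A_k$ is in fact a Lipschitz graph over a fixed ray $\ell_b$ of slope $O(\beta_k)$.'' Containment of a H\"older arc in a tube of radius $\beta_k\,2^{-k}r_0$ about a ray, together with endpoints on the two boundary spheres, does \emph{not} prevent the arc from backtracking radially or oscillating longitudinally inside the tube; its length could be arbitrarily large. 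To promote tube containment to a Lipschitz graph one needs flatness at \emph{every} point of $\Gamma_i\cap A_k$ and at \emph{every} scale below $2^{-k}r_0$, with the square--summability condition you mention --- but your sum $\sum_k\beta_k^2$ only records flatness at the single center $b$. The frequency decrements $N(b;2^{-k+1}r_0)-N(b;2^{-k-1}r_0)$ telescope nicely, but they say nothing about the small--scale geometry of the arc at points $y\in A_k$ far from $b$.

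The paper closes this gap differently: it does not attempt to prove a Lipschitz graph structure directly. Instead, at each dyadic scale it picks a point $b_{a,\lambda^lR}$ \emph{on the arc} (not the center $b$) where, by the blow--up analysis of Lemmas~\ref{3DHomogeneous} and~\ref{nonisolated2}, the frequency is uniformly pinched to $\frac{1}{2\sqrt\kappa}$; this propagates (via Lemma~\ref{StructureLemma}) to the bound $N_\phi(z,\gamma\lambda^lR)<\frac{1}{2\sqrt\kappa}+\delta$ for \emph{every} $z\in u^{-1}\{0\}\cap B_{\gamma\lambda^lR}(b_{a,\lambda^lR})$. That uniform smallness is the hypothesis~\eqref{SmallnessAssumption} of Lemma~\ref{ConditionalEstimate}, whose proof feeds the $\beta_2$--distortion bound at \emph{all} points of the zero set into the Naber--Valtorta discrete Reifenberg theorem (Theorem~\ref{NaberValtorta}) to get a Minkowski content --- hence $\mathcal H^1$ --- bound $\mathcal H^1\bigl(u^{-1}\{0\}\cap B_{\gamma\lambda^lR}(b_{a,\lambda^lR})\bigr)\le \hat C\gamma\lambda^lR$. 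Summing the geometric series in $l$ gives~\eqref{ArcLength}. In short: the paper iterates on balls centered \emph{along the arc}, where the frequency is near its minimum and the pinching is automatically small at every point, and lets Naber--Valtorta do the work your Lipschitz--graph claim was meant to do.
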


From Theorem \ref{LocalFiniteLength}, we easily obtain the main result of this article: 

\begin{theorem} \label{Strengthened}
Let $u \, : \, \Omega \to \mathbf{D}_\kappa$ be an energy-minimizing harmonic map and let $K$ be any compact subset of $\Omega \subset \mathbf{R}^3$. 
Then $u^{-1} \{ 0 \} \cap K$ consists of a finite union of isolated points and H\"older continuous curves of finite length and with finitely many crossings.
In particular, $u^{-1} \{ 0 \} \cap K$ is rectifiable, namely it is the union of finitely many Lipschitz curves and a set of $\mathcal{H}^1$-measure zero. 
\end{theorem}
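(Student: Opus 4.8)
The plan is to derive Theorem \ref{Strengthened} as an essentially immediate consequence of Theorem \ref{LocalFiniteLength} together with a compactness argument. First I would recall from \cite{AHL} that $u^{-1}\{0\}$ is closed in $\Omega$, since $u$ is continuous; hence $u^{-1}\{0\}\cap K$ is compact. The strategy is to produce a finite cover of this compact set by the local neighborhoods furnished by Theorem \ref{LocalFiniteLength}, and then piece together the resulting local descriptions. The one structural subtlety is that Theorem \ref{LocalFiniteLength} describes only \emph{punctured} neighborhoods $\mathcal{O}\setminus\{b\}$, so I must first isolate the behavior at the distinguished points $b$ themselves.

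The key steps, in order, would be as follows. (1) Apply Theorem \ref{LocalFiniteLength} at every $b\in u^{-1}\{0\}\cap K$ to get an open neighborhood $\mathcal{O}_b$ in which the punctured defect set is a union of at most $K_0(N(b;0^+))$ embedded H\"older arcs of length at most $L(N(b;0^+),\mathrm{diam}(\mathcal{O}_b))$, emanating from $b$ and hitting $\partial\mathcal{O}_b$, with the quantitative separation $\mathrm{dist}(\Gamma_i\cap\partial B_r(b),\Gamma_j\cap\partial B_r(b))\ge \tfrac12 d_0 r$ for small $r$. (2) Observe that by upper semicontinuity of the frequency $b\mapsto N(b;0^+)$ (a property established in \cite{AHL} and underlying the stratification there) together with the discreteness of the set of attained frequency values, the quantities $K_0(N(b;0^+))$ and $L(N(b;0^+),\cdot)$ are locally uniformly bounded; in particular, after shrinking, each $\mathcal{O}_b$ can be taken so that it contains no other point of $u^{-1}\{0\}$ having strictly larger frequency, and so that the only point of $u^{-1}\{0\}$ in $\mathcal{O}_b$ of maximal frequency within $\mathcal{O}_b$ is $b$ itself. (3) Cover $u^{-1}\{0\}\cap K$ by finitely many $\mathcal{O}_{b_1},\dots,\mathcal{O}_{b_M}$ by compactness. (4) On the overlaps, the H\"older arcs from different charts either coincide as sets or cross transversally at finitely many points; here one invokes the local structure theorem \cite[Theorem 5.2]{AHL} directly, which already guarantees that in any compact piece the defect set is a finite union of H\"older curves with finitely many crossings, so the finiteness of the number of curves and crossings in $K$ is inherited. (5) Glue: the union over the finite cover of the finitely many arcs, each of finite $\mathcal{H}^1$-measure by Theorem \ref{LocalFiniteLength}, is a finite union of H\"older curves of finite length, plus the finite set $\{b_1,\dots,b_M\}$ of exceptional points, which has $\mathcal{H}^1$-measure zero. (6) Finally, a H\"older curve of finite $\mathcal{H}^1$-measure admits a Lipschitz (indeed arclength) reparametrization up to an $\mathcal{H}^1$-null set: parametrize $\Gamma_i$ by arclength on $[0,\mathcal{H}^1(\Gamma_i)]$, which is $1$-Lipschitz, and note the parametrization is onto $\Gamma_i$; this yields the asserted rectifiability, $u^{-1}\{0\}\cap K = \bigcup(\text{finitely many Lipschitz curves})\cup(\text{an }\mathcal{H}^1\text{-null set})$.

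The main obstacle, such as it is, lies in step (2): making rigorous that the constants $K_0(N(b;0^+))$, $d_0(N(b;0^+))$ and $L(N(b;0^+),\mathrm{diam}(\mathcal{O}_b))$ do not blow up as $b$ ranges over the compact set $u^{-1}\{0\}\cap K$. This is handled by the fact, already available from \cite{AHL}, that the frequency function $N(\,\cdot\,;0^+)$ is upper semicontinuous and takes values in a discrete set bounded on compacta (a consequence of the classification of homogeneous minimizers in \cite{HardtLin93} combined with the monotonicity formula); hence $\sup_{b\in u^{-1}\{0\}\cap K} N(b;0^+)<\infty$, and the constants in Theorem \ref{LocalFiniteLength} are uniformly controlled. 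Everything else is bookkeeping: the counting of curves and crossings is already in \cite[Theorem 5.2]{AHL}, the finite-length bound is the new content supplied by Theorem \ref{LocalFiniteLength}, and the passage from finite-length H\"older curve to Lipschitz curve modulo $\mathcal{H}^1$-null sets is the standard arclength reparametrization. I would therefore present the proof compactly, emphasizing the covering argument and the uniform control of the geometric constants, and leaving the reparametrization remark as a one-line observation.
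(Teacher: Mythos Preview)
Your proposal is correct and follows essentially the same route as the paper: cover $u^{-1}\{0\}\cap K$ by the neighborhoods $\mathcal{O}_b$ from Theorem~\ref{LocalFiniteLength}, extract a finite subcover by compactness, and then invoke a standard result (the paper cites \cite[Theorem~I.1.8]{DavidSemmes}, which is your arclength reparametrization) to pass from finite-length H\"older curves to Lipschitz curves. Your step~(2) about uniform control of $K_0$, $d_0$, $L$ via upper semicontinuity of the frequency is unnecessary: once the subcover is finite, each of the finitely many $\mathcal{O}_{b_i}$ contributes finitely many arcs of finite length, so the total is automatically finite without any uniformity in~$b$.
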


Noting that the potential term $\psi$ in \eqref{FullEricksen} has $C^2$-dependence on $|u|$ only, and $\psi'(0)=0$, $\psi'(s)s \leq M s^2$ holds for a suitably large constant $M > 0$, 
depending on $\psi$.  
Consequently, we can generalize the variational formulas in Section \ref{VariationalSection}, as in \cite{Lin89}, \cite[Section3]{Lin91}.
More precisely, one can replace the classical Dirichlet energy, $D(x,r)$, in the variational formulas with:
\begin{equation}
\tilde{D}(x,r) = \int_{B_r(x)} \left [ \left | \nabla u \right |^2 + \psi' \left ( \kappa^{-1/2} |u| \right ) \kappa^{-1/2} |u| \right ] \, \mathrm{d}x.
\label{ModifiedDirichlet}
\end{equation}
Likewise, defining the height function, $H(x,r) = \int_{\partial B_r(x)} |u|^2 \, \mathrm{d}x$, the Almgren frequency function $N(x,r) = \frac{r D(r)}{H(r)}$,  can be replaced with:
\begin{equation}
 \tilde{N}(x,r) = e^{\Lambda r} \frac{r \tilde{D}(x,r)}{H(x,r)}, \label{GeneralizedFrequency}
\end{equation}
for a suitably large $\Lambda$, depending on the subcritical potential term $\psi$ and the constant $\kappa$ only.
By \cite[Lemma 7.1]{Lin91}, $\tilde{N}(x,r)$ is monotone increasing in $r$. 
In short, one can adapt the smoothed quantities introduce in Section \ref{VariationalSection} analogously.
Therefore, the conclusions we draw for energy-minimizing harmonic maps are also valid in the presence of a subcritical potential as described below the equation \eqref{FullEricksen}.
In particular, the conclusion of Theorem \ref{Strengthened} holds for the minimizing configurations of the modified Ericksen energy under the assumption \eqref{OneConstantApprox}.

\subsection{The Strategy of the Proof} \label{IntroProof}

We remark that the framework introduced in \cite{DLMSV} is extremely robust and makes very economical use of the properties of harmonic $\mathcal{Q}$-valued maps,
other than the variational structure captured in the identities in \cite[Proposition 3.1]{DLMSV}.
Similar ideas, as well as those borrowed from \cite{NV}, have recently found application in the work of Focardi and Spadaro \cite{FS} on the lower dimensional obstacle problem, 
which enjoys the same variational structure.  
In fact, the variational identities recalled in Lemma \ref{VarIdentities} are satisfied by any stationary map into a target that is a cone.
In a future work we will apply this framework to stationary harmonic maps into $k$-trees, as such maps naturally arise in problems concerning the optimal partition of domains, 
cf. \cite{CL07}, \cite{CL08}, \cite{CL10}.

We will use the basic compactness, regularity and dimension reduction results, which originally appeared in \cite{Lin89}, \cite{Lin91}, 
(cf. \cite[Lemma 2.7]{AHL}, \cite[Lemma 2.8]{AHL} and \cite[Lemma 2.11]{AHL} respectively.) 
Furthermore, a useful corollary of these ingredients is the unique continuation result for energy-minimizing maps into $\mathbf{D}_\kappa$ in Lemma \ref{UniqueContinuation}. 
Otherwise, Sections \ref{VariationalSection} and \ref{GeometricSection} rely heavily on variational ingredients, and therefore, follow \cite{DLMSV} very closely.
In particular, in Section \ref{VariationalSection} we define the smoothed quantities that play a central role in our analysis and
prove the basic identities they satisfy, as well as several related results, the most important of which are
a refined Weiss-type estimate and a consequent frequency pinching estimate, both originally introduced in \cite{DLMSV}. 
In Section \ref{GeometricSection} we recall the Jones $\beta_2$-number, its linear algebraic characterization, and a distortion bound in \cite{DLMSV},
which states that the Jones $\beta_2$-number can be controlled by the renormalized, averaged frequency pinching.

In our proof we also rely on the analysis of the zero set in \cite{AHL} by Hardt, Lin and the author.
We restate the most crucial lemmas from \cite{AHL} in Section \ref{TopologicalSection} for the reader's convenience. 
The key ideas in \cite{AHL} are the following: 
The zero set for homogeneous local minimizers in $\mathbf{R}^3$ was characterized and frequency bounds near the zero set were derived. 
(See Lemma \ref{3DHomogeneous} in Section \ref{TopologicalSection} for a precise statement.)
These were in turn used to analyze the zero set of general minimizers, as homogeneous blow-ups serve as good approximations of them at appropriate scales.
This strategy was combined with the following observation:
Since the vanishing order at any point with a cylindrical blow-up is $1/2\sqrt{\kappa}$ by the classification result \cite[Lemma 3.1]{AHL},
when the frequency at scale $2$ is sufficiently close to $1/2\sqrt{\kappa}$, the Hausdorff distance between the zero set and a line has to be small at scale $1$.
The combination of these two results enables one to locally verify at an appropriate scale the hypothesis of Reifenberg's Topological Disk Theorem, cf. \cite{Reifenberg60},
which yields a bi-H\"{o}lder homeomorphism. 

But even more crucially for the application in this article, the same combination also gives the frequency bound \eqref{SmallnessAssumption} locally and at an appropriate scale.
(While the bounds in \cite{AHL} are for the classical Almgren frequency function, since they depend on compactness arguments and the classification of homogeneous blow-ups only,
they remain valid for the smoothed frequency we will use in this article.)
Thanks to the bound \eqref{SmallnessAssumption}, instead of building intermediate and efficient covers as in \cite[Lemma 7.3, Proposition 7.2]{DLMSV},
we can directly build a cover that gives a local estimate on the Minkowski content of $u^{-1} \{ 0 \}$. 
Hence, even though the discrete Reifenberg theorem introduced in \cite{NV} still plays the central role in obtaining a Minkowski content estimate,
Section \ref{MeasureSection} is much simpler than its counterpart in \cite{DLMSV}.

Following the strategy of \cite{AHL} in Section \ref{FinalSection}, given an arbitrary point in the zero set, we are able to find a neighborhood of it, 
in which the zero set is close enough to that of a homogeneous map to start with. 
Then working at successively finer scales we verify the bound \eqref{SmallnessAssumption} locally on each H\"{o}lder arc emanating from this point. 
In particular, the length of the arc converges as a geometric sum, as we approach the focal point. 
While it would also be possible to follow the covering scheme in \cite{DLMSV} more closely, our proof is more attuned to the structure of the zero set, in the spirit of \cite{AHL},
and therefore, it is somewhat more geometric.

Finally, the topological persistence of zeros was observed in \cite[Lemma 4.2]{AHL}. In other words, the zero set has no holes. 
This observation played a crucial role in proving the main result of \cite{AHL}, which is contained in Theorem \ref{LocalFiniteLength}.
Unlike \cite{DLMSV}, we do not rely on the rectifiability result of Azzam and Tolsa in \cite{AT15}.
Since we already know that the zero set is a collection of continuous curves (modulo a discrete set of isolated zeros),
by a standard result in geometric measure theory, we obtain a Lipschitz parametrization for each of the finitely many curves in the zero set, cf. \cite[Theorem I.1.8]{DavidSemmes}.

\section{Variational formulas and related estimates} \label{VariationalSection}

We introduce the smoothed versions of the quantities that played a central role in the analysis in \cite{AHL}.
These smoothed quantities were defined for the first time in \cite{DLS}, and they were utilized in \cite{DLMSV},
which we follow closely in this section.

\begin{definition} \label{SmoothedFunctions}
Let $\phi$ be the following Lipschitz function:
$$
\phi(r) =
\begin{cases}
1, \quad & 0 \leq r \leq \frac{1}{2}, \\
2 - 2r, \quad & \frac{1}{2} \leq r \leq 1, \\
0, \quad & 1 \leq r.
\end{cases}
$$
We also let $\nu_x(y) = |y-x|^{-1} \left ( y - x \right )$, a unit vector field. 

We define the smoothed Dirichlet, height, frequency, remainder and frequency pinching functions respectively:
\begin{equation}
D_\phi (x,r) = \int_{\mathbf{R}^3} | \nabla u |^2 (y) \phi \left ( \frac{|x-y|}{r} \right ) \, \mathrm{d}y,
\label{SDirichlet}
\end{equation}
\begin{equation}
H_\phi (x,r) = - \int_{\mathbf{R}^3} |u|^2(y) |y-x|^{-1} \phi ' \left ( \frac{|x-y|}{r} \right ) \, \mathrm{d}y,
\label{SHeight}
\end{equation}
\begin{equation}
N_\phi (x,r) = \frac{r D_\phi (x,r)}{H_\phi (x,r)},
\label{SAlmgren}
\end{equation}
\begin{equation}
E_\phi(x,r) = - \int_{\mathbf{R}^3} \left | \partial_{\nu_x} u \right |^2(y) |y-x| \phi ' \left ( \frac{|x-y|}{r} \right ) \, \mathrm{d}y,
\end{equation}
\begin{equation}
W_s^r (x) = N_\phi (x,r) - N_\phi (x,s), \quad 0 < s \leq r.
\end{equation}
\end{definition}

In this section we consider $u \, : \, B_{64}(0) \to \mathbf{D}_\kappa$, a map minimizing the Dirichlet energy. 
Since our analysis is local in nature, we assume that $B_{64}(0) \subset \Omega$ and $N_{\phi}(0,64) \leq \overline{N}$. 
We will always apply the appropriately scaled versions of the variational identities and estimates in the following sections.

Note that if we replace $\phi$ with a sequence of smooth functions $\left \{ \phi_i \right \}$ approximating the characteristic function of $[0,1]$, 
then the corresponding quantities converge to the classical Dirichlet, height and Almgren frequency functions as introduced in \cite{Almgren83}.
Our motivations in working with smoothed quantities are twofold: 
Firstly, we would like to have differential identities analogous to those in \cite[Lemma 2.2]{AHL}, yet valid at every $r \in (0,\infty)$, not only at almost every $r$.
Secondly, we would like to consider directional derivatives of $D_\phi (x,r)$, $H_\phi (x,r)$ and $N_\phi (x,r)$ as well.
We state and prove these identities in the next lemma:

\begin{lemma} \label{VarIdentities}
Let $u \, : \, B_{64}(0) \to \mathbf{D}_\kappa$ be a map minimizing the Dirichlet energy.
The functions $D_\phi$, $H_\phi$ and $N_\phi$ are $C^1$ in spatial variables, as well as in $r \in (0, \infty)$.
Moreover, for any $r \in (0, + \infty )$, $x \in \mathbf{R}^3$, $v \in \mathbf{R}^3$, 
the following identities hold:
\begin{equation}
D_\phi(x,r) = - \frac{1}{r} \int_{\mathbf{R}^3} \partial_{\nu_x} u (y) \cdot u (y)  \phi' \left ( \frac{|x-y|}{r} \right ) \, \mathrm{d}y,
\label{IBP}
\end{equation}
\begin{equation}
\partial_r D_\phi (x,r) = \frac{1}{r} D_\phi (x,r) + \frac{2}{r^2} E_\phi (x,r),
\label{DirDerivScale}
\end{equation}
\begin{equation}
\partial_{v} D_\phi (x,r) = - \frac{2}{r} \int_{\mathbf{R}^3} \partial_{\nu_x} u (y) \cdot \partial_{v} u (y)  \phi' \left ( \frac{|x-y|}{r} \right ) \, \mathrm{d}y,
\label{DirDerivSpace}
\end{equation}
\begin{equation}
\partial_r H_\phi (x,r) = \frac{2}{r} H_\phi (x,r) + 2 D_\phi (x,r),  
\label{HeightDerivScale}
\end{equation}
\begin{equation}
\partial_{v} H_\phi (x,r) = - 2 \int_{\mathbf{R}^3} \partial_v u (y) \cdot u(y) | y - x |^{-1} \phi' \left ( \frac{|x-y|}{r} \right ) \, \mathrm{d}y.
\label{HeightDerivSpace}
\end{equation}
Moreover, $N_\phi (x,r)$ and $r^{-2} H_\phi (x,r)$ are nondecreasing functions of $r$, and they satisfy:
\begin{equation}
\partial_r N_\phi (x,r) = \frac{2}{r H_\phi (x,r)^2} \left ( H_\phi (x,r) E_\phi (x,r) - r^2 D_\phi (x,r)^2 \right ) \geq 0,
\label{FreqDerivScale}
\end{equation}
\begin{equation}
\partial_r \log \left ( r^{-2} H_\phi (x,r) \right ) = \frac{2 N_\phi (x,r)}{r}.
\label{LogHeight}
\end{equation}
\end{lemma}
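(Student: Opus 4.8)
\textbf{Proof proposal for Lemma \ref{VarIdentities}.}

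The plan is to establish the identities by direct differentiation under the integral sign, using only the variational structure of $u$ (namely, that it is stationary for the Dirichlet energy, hence harmonic away from $u^{-1}\{0\}$ and satisfying the inner-variation/Pohozaev identities), together with the Lipschitz cutoff $\phi$ which confines all integrals to the compact annulus $\{r/2 \le |y-x| \le r\}$ where $u$ is smooth. First I would record that $D_\phi$, $H_\phi$, $E_\phi$ are $C^1$ in $(x,r)$: after the change of variables $y = x + rz$ the cutoff becomes $\phi(|z|)$, independent of $(x,r)$, and the integrand depends smoothly on $(x,r)$ since $u$ is smooth on the relevant compact set (recall $\phi'$ is supported in $[1/2,1]$, which stays away from the singular set at the scales under consideration); the $C^1$ regularity of $N_\phi = r D_\phi/H_\phi$ then follows once we know $H_\phi > 0$, which is the statement that $r^{-2}H_\phi$ is nondecreasing together with $H_\phi(x,r) \to 0$ being impossible at a non-removable scale — more simply, $H_\phi > 0$ unless $u \equiv 0$ on the annulus, which is excluded by unique continuation (Lemma \ref{UniqueContinuation}).

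Next I would prove the two ``spatial'' identities \eqref{DirDerivSpace} and \eqref{HeightDerivSpace} and the height scaling identity \eqref{HeightDerivScale}, as these are the most elementary: differentiating $D_\phi(x,r) = \int |\nabla u|^2(y)\,\phi(|x-y|/r)\,\mathrm{d}y$ in $x$ along $v$, integrating by parts to move the derivative off $\phi$ onto $\nabla u$ (no boundary term, since $\phi$ is compactly supported), and using $\Delta u \cdot \partial_v u = 0$ a.e.\ (harmonicity, valid since the integration is over a region where $u$ is smooth) to collapse the resulting expression; \eqref{HeightDerivSpace} and \eqref{HeightDerivScale} follow by the analogous manipulation on $H_\phi$, with \eqref{HeightDerivScale} coming from $\partial_r \phi(|x-y|/r) = -|x-y|r^{-2}\phi'(|x-y|/r)$ and a single integration by parts converting $|x-y|^{-1}\phi'$ terms into $|\nabla u|^2$ plus the $2r^{-1}H_\phi$ term. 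The integration-by-parts identity \eqref{IBP} is the radial case of this computation: write $|\nabla u|^2\phi = \nabla u \cdot \nabla(u\,\phi) - u\cdot\nabla\phi\,\partial_{\nu_x}u$-type rearrangement, use harmonicity to kill the first term's divergence, and identify $\nabla_y \phi(|x-y|/r) = -r^{-1}\phi'(|x-y|/r)\nu_x(y)$.

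The main work — and the step I expect to be the genuine obstacle — is the Dirichlet scaling identity \eqref{DirDerivScale}, equivalently (combined with \eqref{HeightDerivScale}) the frequency monotonicity \eqref{FreqDerivScale}. Here one cannot avoid the \emph{inner} variation: differentiating $D_\phi$ in $r$ produces $\int |\nabla u|^2 (-|x-y|r^{-2})\phi'\,\mathrm{d}y$, and to rewrite this in terms of $E_\phi$ (which involves only the radial derivative $|\partial_{\nu_x}u|^2$, not the full gradient) one tests the stationarity of $u$ against the radial vector field $y \mapsto |x-y|\,\phi'(|x-y|/r)\,\nu_x(y)$, or more precisely against the domain deformation generated by it; this is exactly the Hardt–Lin–Almgren computation underlying the classical frequency monotonicity, now carried out with the smooth cutoff in place of a sharp one, and it is legitimate because the relevant vector field is Lipschitz and compactly supported in the smooth region. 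The outcome is \eqref{DirDerivScale}; then \eqref{LogHeight} is immediate from \eqref{HeightDerivScale} divided by $H_\phi$, and \eqref{FreqDerivScale} follows by the quotient rule from \eqref{DirDerivScale}, \eqref{HeightDerivScale} and \eqref{IBP}, with the sign $\partial_r N_\phi \ge 0$ being the Cauchy–Schwarz inequality $\big(\int \partial_{\nu_x}u\cdot u\,(-|x-y|^{-1}\phi')\big)^2 \le \big(\int|\partial_{\nu_x}u|^2(-|x-y|\phi')\big)\big(\int|u|^2(-|x-y|^{-1}\phi')\big)$, i.e.\ $r^2 D_\phi^2 \le E_\phi H_\phi$; monotonicity of $r^{-2}H_\phi$ drops out of \eqref{HeightDerivScale} and $N_\phi \ge 0$.
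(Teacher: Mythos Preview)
Your proposal follows essentially the same route as the paper: the target (outer) variation for \eqref{IBP}, the domain (inner) variation for \eqref{DirDerivScale} and \eqref{DirDerivSpace}, direct differentiation after the change of variables $y=x+rz$ for the height identities, and Cauchy--Schwarz applied to \eqref{IBP} for the sign in \eqref{FreqDerivScale}.

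There is, however, one recurring misstatement that you should correct. You repeatedly assert that the annulus $\{r/2 \le |y-x| \le r\}$ ``stays away from the singular set,'' so that $u$ is smooth there and pointwise integration by parts is justified. This is false: the defect set $u^{-1}\{0\}$ is one-dimensional and will generically cut through such annuli --- indeed, for $x$ lying on a line defect it does so for \emph{every} $r>0$. Fortunately your argument does not actually need this claim. The weak outer and inner variation identities (which you already cite at the outset as ``the inner-variation/Pohozaev identities'') hold for any $H^1$ energy minimizer against any Lipschitz, compactly supported test function or vector field in $\Omega$, with no reference to where the singular set lies; this is exactly how the paper proceeds. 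So replace each appeal to ``$u$ is smooth on the annulus'' by a direct appeal to stationarity with the appropriate test field ($\varphi(y)=\phi(|y-x|/r)(y-x)$ for \eqref{DirDerivScale}, $\varphi(y)=\phi(|y-x|/r)\,v$ for \eqref{DirDerivSpace}, and the scalar test $\phi(|y-x|/r)$ in the Euler--Lagrange identity for \eqref{IBP}), and your outline coincides with the paper's proof.
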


\begin{proof}
We note that we can approximate $\phi$ by compactly supported, smooth functions $\phi_k$ in $W^{1,p}$ for every $p < \infty$, prove the above identities with $\phi_k$ in place of $\phi$,
and finally pass to the limit. Hence, it suffices to show the above identities for smooth $\phi$ with $\phi' \leq 0$. 
Since $u$ is a minimizer, we obtain the following first variation formulas by considering:
\begin{enumerate}[(a)]
 \item the target variation $u(y) \to \left ( 1 + t \phi(y) \right ) u(y)$, resulting in the Euler-Lagrange equation:
\begin{equation}
\int_{\mathbf{R}^3} \left [ \left \langle \nabla u \cdot u, \nabla \phi \right \rangle + \left | \nabla u \right |^2 \phi \right ] \, \mathrm{d}y = 0,
\label{TargetV}
\end{equation}
 \item the domain variation $u(y) \to u \left ( y + t \varphi(y)y \right )$, leading to the system of conservation laws:
\begin{equation}
\int_{\mathbf{R}^3} \left [ 2 \left \langle \nabla u \otimes \nabla u , \nabla \varphi \right \rangle - \left | \nabla u \right |^2 \mathrm{div} \varphi \right ] \, \mathrm{d}y = 0.
\label{DomainV}
\end{equation}
\end{enumerate}

Setting $\phi(y) = \phi \left ( \frac{|y-x|}{r} \right )$ in \eqref{TargetV} gives \eqref{IBP}.
Differentiating \eqref{SDirichlet} with respect to $r$ and setting $\varphi(y) = \phi \left ( \frac{|y-x|}{r} \right ) (y-x)$ in \eqref{DomainV} yields \eqref{DirDerivScale}.
Likewise, differentiating \eqref{SDirichlet} with respect to $x$, in the direction $v$, 
and setting $\varphi(y) = \phi \left ( \frac{|y-x|}{r} \right ) v$ in \eqref{DomainV} yields \eqref{DirDerivSpace}.

We can rewrite \eqref{SHeight} via a change of variables as:
\begin{equation}
H_\phi(x,r) = - \frac{1}{r^2} \int_{\mathbf{R}^3} \left | u(x+rz) \right |^2 |z|^{-1} \phi' \left ( |z| \right ) \, \mathrm{d}z. 
\label{SHeight2}
\end{equation}
Differentiating \eqref{SHeight2} with respect to $r$ and by changing back to the variable $y$, we obtain:
$$
\partial_r H_\phi(x,r) = \frac{2}{r} H_\phi (x,r) - 2 \int_{\mathbf{R}^3} \phi' \left ( \frac{|y-x|}{r} \right ) \partial_{\nu_x} u \cdot u \, \mathrm{d}y, 
$$
which combined with \eqref{IBP} implies \eqref{HeightDerivScale}.
Similary, we can rewrite \eqref{SHeight} via a change of variables as:
\begin{equation}
H_\phi(x,r) = - \frac{1}{r^2} \int_{\mathbf{R}^3} \left | u(x+z) \right |^2 |z|^{-1} \phi' \left ( \frac{|z|}{r} \right ) \, \mathrm{d}z. 
\label{SHeight3}
\end{equation}
Differentiating \eqref{SHeight3} with respect to $x$, in the direction $v$, and changing back to the variable $y$ yields \eqref{HeightDerivSpace}.

We observe that \eqref{FreqDerivScale} follows from \eqref{DirDerivScale} and \eqref{HeightDerivScale}, 
and the nonnegativity of $\partial_r N_\phi(x,r)$ can be seen using \eqref{IBP} and the Cauchy-Schwartz inequality.
Finally, \eqref{LogHeight} follows immediately from \eqref{HeightDerivScale}.
\end{proof}

Next we prove local bounds for $H_\phi$ and $N_\phi$ that are analogous to those proved in \cite[Lemma 2.3]{AHL} for the classical height and frequency functions:

\begin{lemma} \label{HFBounds}
There exists a constant $C(\phi) > 0$ such that for $u$ and $\phi$ as above, we have the following local bounds:
\begin{equation}
H_\phi (y, \rho ) \leq C H_\phi (x, 4 \rho),
\label{LocalHeight}
\end{equation}
whenever $y \in B_\rho(x)$ and $\mathrm{dist}(x, \partial \Omega) > 4 \rho$, and
\begin{equation}
N_\phi(y,r) \leq C \left ( N_\phi (x, 16r) + 1 \right ),
\label{LocalFreq} 
\end{equation}
whenever $y \in B_{r/4} (x)$ and $\mathrm{dist}(x, \partial \Omega) > 16r$.
\end{lemma}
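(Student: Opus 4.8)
The plan is to prove the two bounds in Lemma \ref{HFBounds} by exploiting the scaling and monotonicity identities established in Lemma \ref{VarIdentities}, together with the basic observation that the smoothed height and Dirichlet functions, despite being integrals over annular regions with a fixed Lipschitz weight $\phi$, are comparable to ordinary averages over balls. More precisely, since $-\phi'$ is supported in $[1/2,1]$ and bounded, one has
\begin{equation*}
H_\phi(x,r) \sim \frac{1}{r}\int_{B_r(x)\setminus B_{r/2}(x)} |u|^2 \, \mathrm{d}y
\quad\text{and}\quad
D_\phi(x,r) \sim \int_{B_r(x)} |\nabla u|^2 \, \mathrm{d}y,
\end{equation*}
with constants depending only on $\phi$; these comparisons will be the workhorse throughout.

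For the height bound \eqref{LocalHeight}: first I would reduce, by the monotonicity of $r^{-2}H_\phi(x,r)$ from \eqref{LogHeight} (equivalently the nonnegativity of $N_\phi$), to comparing $H_\phi(y,\rho)$ with $H_\phi(y,\text{something})$ and then translating the center. The inclusion $B_\rho(y) \subset B_{2\rho}(x) \subset B_{4\rho}(x)$ for $y\in B_\rho(x)$ lets me bound the annular integral defining $H_\phi(y,\rho)$ by the full ball integral $\int_{B_{2\rho}(x)}|u|^2$. To convert that ball integral back into an \emph{annular} one centered at $x$ at the larger scale $4\rho$, I would use a sub-mean-value / Caccioppoli-type argument: because $u$ is energy-minimizing hence (weakly sub-)harmonic in the relevant sense, $|u|^2$ satisfies a sub-mean value inequality, so $\int_{B_{2\rho}(x)}|u|^2 \lesssim \rho \int_{B_{4\rho}(x)\setminus B_{2\rho}(x)}|u|^2 \lesssim \rho^2 H_\phi(x,4\rho)/\rho \cdot \rho$; chasing the powers of $\rho$ and using $H_\phi(x,4\rho)\sim \rho^{-1}\int_{\text{annulus}}|u|^2$ gives \eqref{LocalHeight}. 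This is essentially the argument of \cite[Lemma 2.3]{AHL} transported to the smoothed setting, and the only care needed is that $u$ maps into the cone $\mathbf{D}_\kappa$ rather than Euclidean space, which does not affect sub-mean-value properties of $|u|^2$.

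For the frequency bound \eqref{LocalFreq}: writing $N_\phi(y,r) = r D_\phi(y,r)/H_\phi(y,r)$, I would bound the numerator and denominator separately. For the numerator, the inclusion $B_r(y)\subset B_{2r}(x)$ and the comparison $D_\phi(y,r)\sim \int_{B_r(y)}|\nabla u|^2 \le \int_{B_{2r}(x)}|\nabla u|^2$ reduces matters to estimating the Dirichlet energy of $u$ on $B_{2r}(x)$ in terms of $D_\phi(x,16r)$ and $H_\phi(x,16r)$ — here one uses that $N_\phi(x,16r) = 16r\,D_\phi(x,16r)/H_\phi(x,16r)$ controls $\int_{B_{2r}(x)}|\nabla u|^2$ up to constants and a factor $H_\phi(x,16r)/r$, possibly picking up the additive $+1$ when the frequency is very small. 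For the denominator, I need a \emph{lower} bound $H_\phi(y,r) \gtrsim H_\phi(x,16r)$-type control; this is the delicate direction, since $|u|$ could in principle be small near $y$. This is handled exactly as in \cite{AHL}: one uses that the doubling/almost-monotonicity of $H_\phi$ gives $H_\phi(y,r)\gtrsim H_\phi(y,2r)\cdot$(const) $\gtrsim \rho$-scaled versions, combined with \eqref{LocalHeight} applied at a suitable pair of centers and scales, to compare $H_\phi(y,r)$ from below with a height at center $x$; one then divides and collects constants. The main obstacle is precisely this last step — obtaining a two-sided comparison of heights at shifted centers — and it is resolved by iterating \eqref{LocalHeight} together with the monotonicity of $r^{-2}H_\phi$ (which yields, conversely, a bound $H_\phi(x,16r)\lesssim H_\phi(y, cr)$ for an appropriate $c$ by reversing the roles of the points), after which \eqref{LocalFreq} follows by taking the quotient. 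I expect everything else to be bookkeeping with the explicit constants in $\phi$ and the factors $4$, $16$ in the radii.
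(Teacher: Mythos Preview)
Your treatment of \eqref{LocalHeight} is essentially the paper's: bound $H_\phi(y,\rho)$ by $\int_{B_{2\rho}(x)}|u|^2$, then push the ball integral into the annulus $B_{4\rho}(x)\setminus B_{2\rho}(x)$ using the monotonicity of the classical $r^{-2}H(x,r)$ (which is what your ``sub-mean-value'' step amounts to), and finally reweight against $-s^{-1}\phi'(s/4\rho)$. That part is fine.

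The frequency bound \eqref{LocalFreq}, however, has a genuine gap in your plan. Bounding numerator and denominator of $N_\phi(y,r)=rD_\phi(y,r)/H_\phi(y,r)$ separately gives only an \emph{exponential} estimate, not the linear one stated. Concretely: $rD_\phi(y,r)\le C\,rD_\phi(x,16r)=C\,N_\phi(x,16r)H_\phi(x,16r)$, while reversing roles in \eqref{LocalHeight} yields $H_\phi(y,r)\ge C^{-1}H_\phi(x,r/4)$. Dividing, you are left with the ratio $H_\phi(x,16r)/H_\phi(x,r/4)$, and by \eqref{LogHeight} this ratio equals $64^{2}\exp\bigl(\int_{r/4}^{16r}2N_\phi(x,t)\,dt/t\bigr)\le C\,64^{2N_\phi(x,16r)}$. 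So your quotient argument produces $N_\phi(y,r)\le C\,N_\phi(x,16r)\cdot 64^{2N_\phi(x,16r)}$, which does not prove the lemma. Your remark that the ``doubling/almost-monotonicity of $H_\phi$ gives $H_\phi(y,r)\gtrsim H_\phi(y,2r)$'' is also backwards: monotonicity of $r^{-2}H_\phi$ gives an \emph{upper} bound for $H_\phi$ at the smaller radius, not a lower one.

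The paper's device circumvents this entirely by never splitting $N_\phi(y,r)$ into numerator and denominator. Instead it bounds the height \emph{ratio at the shifted center} $y$: combining \eqref{LocalHeight}, the integrated form of \eqref{LogHeight} at center $x$, and \eqref{LocalHeight} again with roles reversed, one gets $H_\phi(y,4r)\le C\,e^{CN_\phi(x,16r)}H_\phi(y,r)$. But \eqref{LogHeight} at center $y$ says $\log\bigl(H_\phi(y,4r)/H_\phi(y,r)\bigr)=\int_r^{4r}\frac{2+2N_\phi(y,t)}{t}\,dt\ge (2\log 4)N_\phi(y,r)$ by monotonicity of $N_\phi(y,\cdot)$. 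Taking logarithms thus extracts $N_\phi(y,r)$ \emph{linearly}. The missing idea in your proposal is precisely this use of \eqref{LogHeight} at the point $y$ to turn an exponential height comparison into a linear frequency bound.
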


\begin{proof}
Without loss of generality we can let $x = 0$ and $\rho = r =1$: 
Since $y \in B_1(x)$, $B_1(y) \subset B_2(0)$, and we have:
\begin{equation}
H_\phi(y,1) \leq C \int_{B_2(0)} |u|^2 \, \mathrm{d}z \leq \tilde{C} \int_{\partial B_s} |u|^2 \, \mathrm{d}S,
\label{SemiclassicalComparison}
\end{equation}
for all $s \in (2,4)$. Here the second inequality easily follows from the classical version of \eqref{LogHeight}, namely:
\begin{equation*}
\partial_r \log \left ( r^{-2} H(x,r) \right ) = \frac{2 N(x,r)}{r},
\end{equation*}
and the monotonicity of the classical frequency function $N(x,r) = \frac{r D(x,r)}{H(x,r)}$, where $H(x,r) = \int_{\partial B_r(x)} |u|^2 \, \mathrm{d}S$.
(The classical formulas can be obtained by replacing $\phi$ in Lemma \ref{VarIdentities}
with a sequence of smooth functions $\left \{ \phi_i \right \}$ approximating the characteristic function of $[0,1]$ and passing to the limit.)
Integrating the right-hand side of \eqref{SemiclassicalComparison} on $(0,4)$ with respect to the weight $-s^{-1} \phi'(s/4)$ yields: 
\begin{equation*}
H_\phi(y,1) \leq C H_\phi(0,4),
\end{equation*} 
which is \eqref{LocalHeight} after the normalizations $x=0$, $\rho =1$.

Applying \eqref{LocalHeight}, the integral version of \eqref{LogHeight}, and \eqref{FreqDerivScale} respectively, we have:
\begin{equation}
H_\phi(y,4) \leq C H_\phi (0,16)  \leq C e^{C N_\phi (0,16)} H_{\phi} (0,1/4) \leq C e^{C N_\phi (0,16)} H_{\phi} (y,1),
\label{NewComparison1}
\end{equation}
since $y \in B_{1/4}(0)$, whereas by \eqref{LogHeight} again:
\begin{equation}
H_\phi(y,1) = H_\phi(y,4) \exp \left [ - \int_1^4 \frac{2 N_\phi(y,t)}{t} \, \mathrm{d}t \right ].
\label{NewComparison2}
\end{equation}
As $H_\phi(y,4) > 0$, \eqref{NewComparison1} and \eqref{NewComparison2} together imply:
$$
\exp \left [ \int_1^4 \frac{2 N_\phi(y,t)}{t} \, \mathrm{d}t \right ] \leq C e^{C N_\phi (0,16) }.
$$
Taking the logarithm of both sides and using the monotonicity of $N_\phi(y,t)$ to estimate the integral on the left-hand side from below, we obtain:
$$
N_\phi (y,1) \leq C N_\phi(0,16) + \log C, 
$$
which yields \eqref{LocalFreq}.
\end{proof}

We recall the classical Weiss monotonicity formula introduced in \cite{Weiss} in the context of free boundary theory.
Defining:
$$
\mathcal{W}(x,r) = \frac{1}{r^{1+ 2 \alpha}} D \left ( x,r \right ) - \frac{\alpha}{r^{2 + 2 \alpha}} H \left ( x ,r \right ),
$$
where $N \left ( x ; 0^+ \ \right ) = \alpha$, for almost every $r > 0$:
\begin{equation}
\begin{aligned}
\frac{d}{dr} \mathcal{W}\left ( x_0 , r \right ) = 
\frac{2}{r^{3 + 2 \alpha}} \int_{\partial B_r\left ( x_0 \right )} \left | \left ( x - x_0 \right ) \cdot \nabla u - \alpha u \right |^2 \, \mathrm{d}S.
\label{ClassicalWeissDiff}
\end{aligned}
\end{equation}
The proof follows immediately from the classical versions of \eqref{DirDerivScale} and \eqref{HeightDerivScale}. 
Integrating \eqref{ClassicalWeissDiff} on $[r,R]$, we obtain the following classical estimate:
\begin{equation}
\begin{aligned}
\int_r^R \frac{2}{r^{3 + 2 \alpha }} \int_{\partial B_r\left ( x_0 \right )} \left | \left ( x - x_0 \right ) \cdot \nabla u - \alpha u \right |^2 \, 
\mathrm{d}S \, \mathrm{d}r
= \mathcal{W}\left ( x_0 , R \right ) - \mathcal{W}\left ( x_0 , r \right ).
\label{ClassicalWeissIntegral}
\end{aligned}
\end{equation}
We remark that the left-hand side vanishes, if and only if $u$ is homogeneous of order $\alpha$ on the annulus $B_R \left ( x_0 \right ) \backslash B_r \left ( x_0 \right )$.

The next lemma is a refined version of \eqref{ClassicalWeissIntegral}:

\begin{lemma} \label{QuantWeiss}
There exists a $C = C \left ( \overline{N} \right ) > 0$ such that for $u$ and $\phi$ as above, for every $x \in B_{1/8} (0)$:
\begin{equation}
\int_{B_2 ( x ) \backslash B_{1/4} ( x  )} 
\left | \left ( y - x\right ) \cdot \nabla u - N_\phi \left ( x , \left | y-x \right | \right ) u \right |^2 \, \mathrm{d}y \leq C W_{1/8}^4 \left ( x \right ).
\label{QuantWeissEstimate}
\end{equation}
\end{lemma}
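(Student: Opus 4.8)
The plan is to bootstrap from the smoothed differential identities in Lemma \ref{VarIdentities} to a pointwise-in-$r$ Weiss-type monotonicity, then integrate. First I would introduce the smoothed Weiss function
\begin{equation*}
\mathcal{W}_\phi(x,r) = \frac{1}{r^{1+2N_\phi(x,r)}} \left ( D_\phi(x,r) - \frac{N_\phi(x,r)}{r} H_\phi(x,r) \right ),
\end{equation*}
noting that by the definition \eqref{SAlmgren} of $N_\phi$ the parenthesized quantity vanishes identically, so in the smoothed setting the "naive" analogue of $\mathcal{W}$ is degenerate. The correct object is instead the integrand appearing on the left of \eqref{QuantWeissEstimate}: the quantity $\big|(y-x)\cdot\nabla u - N_\phi(x,|y-x|)\,u\big|^2$ measures the failure of $u$ to be homogeneous of the (variable) order $N_\phi(x,r)$ on each sphere. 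The key computation is to expand this square, integrate against the weight $-\,|y-x|^{-1}\phi'(|y-x|/r)$ (or a related weight supported in the annulus $B_2(x)\setminus B_{1/4}(x)$), and recognize the cross terms via \eqref{IBP} and \eqref{DirDerivScale}. Concretely, expanding gives three contributions: $\int |(y-x)\cdot\nabla u|^2$, which is exactly an $E_\phi$-type term by the definition of $E_\phi$; $-2N_\phi\int (y-x)\cdot\nabla u \cdot u = -2N_\phi\,\partial_{\nu_x}u\cdot u\,|y-x|$, which by \eqref{IBP} reduces to a multiple of $r D_\phi$; and $N_\phi^2 \int |u|^2$, which by \eqref{SHeight} is a multiple of $N_\phi^2 H_\phi$. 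After collecting, the bracketed combination $H_\phi E_\phi - r^2 D_\phi^2$ reappears, which by \eqref{FreqDerivScale} equals $\tfrac{r}{2} H_\phi^2 \,\partial_r N_\phi$.

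The second step is therefore to show that, up to a multiplicative constant depending only on $\overline{N}$, the left-hand side of \eqref{QuantWeissEstimate} is bounded by $\int_{1/4}^{2} H_\phi(x,r)\,\partial_r N_\phi(x,r)\,\frac{dr}{r}$ — possibly after first passing through the classical (sharp-cutoff) Weiss integral identity \eqref{ClassicalWeissIntegral} on dyadic annuli and then smoothing, since the excerpt already records \eqref{ClassicalWeissIntegral} and remarks that its left-hand side controls exactly the homogeneity defect. Using the height bound \eqref{LocalHeight} from Lemma \ref{HFBounds} (to compare $H_\phi(x,r)$ for $r\in[1/4,2]$ with, say, $H_\phi(0,8)$, which we normalize) and the monotonicity $\partial_r N_\phi \ge 0$, we can pull $H_\phi$ out of the integral at the cost of a constant $C(\overline N)$, leaving $\int_{1/4}^{2}\partial_r N_\phi(x,r)\,\frac{dr}{r} \le 4\,[N_\phi(x,2) - N_\phi(x,1/4)]$. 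It remains to absorb the endpoints into $W_{1/8}^4(x) = N_\phi(x,4) - N_\phi(x,1/8)$: since $1/8 \le 1/4 \le 2 \le 4$ and $N_\phi(x,\cdot)$ is monotone by Lemma \ref{VarIdentities}, we have $N_\phi(x,2) - N_\phi(x,1/4) \le N_\phi(x,4) - N_\phi(x,1/8) = W_{1/8}^4(x)$ directly. Finally, the hypothesis $x \in B_{1/8}(0)$ together with $N_\phi(0,64)\le\overline N$ and Lemma \ref{HFBounds} guarantees that all the frequencies and heights invoked are comparable to universal constants times those at the base point, so that $C$ depends on $\overline N$ alone.

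I expect the main obstacle to be the bookkeeping in the first step: the smoothed weight $\phi'(|y-x|/r)$ does not have a clean sphere-by-sphere meaning, so the expansion of $|(y-x)\cdot\nabla u - N_\phi(x,|y-x|)\,u|^2$ must be done carefully, keeping track of the fact that $N_\phi(x,|y-x|)$ is itself a function of the radial variable. The cleanest route is probably to first establish the estimate with sharp cutoffs — i.e. prove $\int_{1/4}^{2}\!\int_{\partial B_r(x)} |(y-x)\cdot\nabla u - N(x,r)u|^2\,dS\,dr \le C\,[N(x,4)-N(x,1/8)]$ using \eqref{ClassicalWeissIntegral} applied with the variable exponent replaced piecewise, a Grönwall/comparison argument between $\mathcal{W}(x,\cdot)$ and $N(x,\cdot)$, and the height bounds — and only then replace the classical quantities by their smoothed counterparts via the approximation $\phi_k \to \chi_{[0,1]}$ already used repeatedly in the excerpt, noting $W_{1/8}^4(x)$ differs from $N(x,4)-N(x,1/8)$ by controlled error. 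A secondary subtlety is that the estimate must hold uniformly for all $x\in B_{1/8}(0)$, not just $x=0$; this is handled by the translation-covariance of all the smoothed quantities together with the local comparison estimates \eqref{LocalHeight} and \eqref{LocalFreq}, so it adds no real difficulty beyond tracking the domain of definition $B_{64}(0)$.
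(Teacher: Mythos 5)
Your proposal identifies most of the right ingredients: the expansion of the square, the recognition of $E_\phi$, $D_\phi$, and $H_\phi$ terms via \eqref{IBP}, the reappearance of $H_\phi E_\phi - r^2 D_\phi^2$ and its link to $\partial_r N_\phi$ through \eqref{FreqDerivScale}, the monotonicity of $N_\phi$, and the uniform comparability of heights and frequencies via Lemma \ref{HFBounds}. You also correctly flag that the central difficulty is the variable coefficient $N_\phi(x,|y-x|)$. The gap is that your proposal never actually resolves that difficulty. In the passage ``expanding gives three contributions\dots which by \eqref{IBP} reduces to a multiple of $rD_\phi$'' you pull $N_\phi$ out of the integral as if it were constant; \eqref{IBP} does not apply to $\int N_\phi(x,|y-x|)\,\partial_{\nu_x}u\cdot u\,\phi'(|y-x|/r)\,\mathrm{d}y$ with a variable radial coefficient. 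Your proposed remedy---first prove the estimate with sharp cutoffs, then approximate $\phi_k\to\chi_{[0,1]}$---does not touch this: the coefficient $N(x,|y-x|)$ is just as variable for a sharp cutoff as for $\phi$. (There is also a secondary mismatch you wave at but do not close: the left-hand side of \eqref{QuantWeissEstimate} is an \emph{unweighted} annulus integral, not an integral against the weight $-|y-x|^{-1}\phi'(|y-x|/r)$.)

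The paper resolves both issues simultaneously by a two-step decomposition. First, a Fubini argument \eqref{FubiniReduction} bounds the unweighted integral over $B_2(x)\setminus B_{1/4}(x)$ by $\int_{1/4}^{4}\int_{B_\tau(x)\setminus B_{\tau/2}(x)}\cdots\,\mathrm{d}y\,\mathrm{d}\tau$, which introduces the correct outer scale variable and matches the support of $\phi'(\cdot/\tau)$. Second---and this is the genuinely new idea relative to the classical Weiss identity---a triangle inequality at each \emph{fixed} outer scale $\tau$ splits the inner integral into a piece where $N_\phi(x,|y-x|)$ is replaced by the constant $N_\phi(x,\tau)$ (to which your expansion and \eqref{IBP}, \eqref{FreqDerivScale} do apply, producing $\int_{1/4}^4\partial_\tau N_\phi\,\mathrm{d}\tau=W_{1/4}^4(x)$) and an oscillation piece weighted by $|N_\phi(x,\tau)-N_\phi(x,|y-x|)|^2\,|u|^2$, bounded via monotonicity by $W_{1/8}^4(x)^2\,H_\phi(0,1)\le C(\overline N)\,W_{1/8}^4(x)$. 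Without this decomposition the computation you sketch does not apply to the integrand that actually appears in \eqref{QuantWeissEstimate}, so the argument does not close.
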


\begin{proof}
Without loss of generality we can assume that $H_\phi(0,1) = 1$.
Firstly, we claim:
\begin{equation}
\begin{aligned}
\int_{B_2(x) \backslash B_{1/4}(x)} \left | (y-x) \cdot \nabla u - N_{\phi} \left ( x, |y-x| \right ) u \right |^2 \, \mathrm{d}y \\
\leq 4 \int_{1/4}^4 \int_{B_\tau(x) \backslash B_{\tau/2}(x)} \left | (y-x) \cdot \nabla u - N_{\phi} \left ( x, |y-x| \right ) u   \right |^2 \, \mathrm{d}y \, \mathrm{d} \tau.
\label{FubiniReduction}
\end{aligned}
\end{equation}
Setting:
$$
F(s) = \int_{\partial B_s(x)} \left | (y-x) \cdot \nabla u - N_{\phi} \left ( x, |y-x| \right ) u \right |^2 \, \mathrm{d}S,
$$
by Fubini's Theorem and the nonnegativity of $F(s)$, we have:
\begin{equation*}
\begin{aligned}
\int_{\frac{1}{4}}^4 \int_{\frac{\tau}{2}}^\tau F(s) \, \mathrm{d}s \, \mathrm{d}\tau & = 
\int_{\frac{1}{8}}^{\frac{1}{4}} \int_{1/4}^{2s} F(s) \, \mathrm{d}\tau \, \mathrm{d}s + \int_{\frac{1}{4}}^2 \int_s^{2s} F(s) \, \mathrm{d}\tau \, \mathrm{d}s + 
\int_2^4 \int_s^{4} F(s) \, \mathrm{d}\tau \, \mathrm{d}s \\
& \geq \int_{\frac{1}{4}}^2 \int_s^{2s} F(s) \, \mathrm{d}\tau \, \mathrm{d}s \geq \int_{\frac{1}{4}}^2 s F(s) \, \mathrm{d}s \geq \frac{1}{4} \int_{\frac{1}{4}}^2 F(s) \, \mathrm{d}s.
\end{aligned}
\end{equation*}
Therefore, \eqref{FubiniReduction} follows from multiplying both sides with $4$ and invoking Fubini's Theorem once again.

From \eqref{FubiniReduction} we get:
\begin{equation}
\begin{aligned}
\int_{B_2(x) \backslash B_{1/4}(x)} \left | (y-x) \cdot \nabla u - N_{\phi} \left ( x, |y-x| \right ) u \right |^2 \, \mathrm{d}y \\
\leq 8 \int_{1/4}^4 \int_{B_\tau(x) \backslash B_{\tau/2}(x)} \left | (y-x) \cdot \nabla u - N_{\phi} \left ( x, \tau \right ) u   \right |^2 \, \mathrm{d}y \, \mathrm{d} \tau + \\
     8 \int_{1/4}^4 \int_{B_\tau(x) \backslash B_{\tau/2}(x)} 
     \left | N_{\phi} \left ( x, \tau \right ) - N_{\phi} \left ( x, |y-x| \right ) \right |^2 \left | u   \right |^2 \, \mathrm{d}y \, \mathrm{d} \tau.
\end{aligned}
\label{TwoTerms}
\end{equation}

We observe that for $\eta_x(y) = y-x = |y-x| \nu_x(y)$, the first term on the right-hand side of \eqref{TwoTerms} can be estimated by a constant multiple of the following:
\begin{equation}
\begin{aligned}
\int_{\frac{1}{4}}^4 \int_{\mathbf{R}^3} \frac{-1}{|y-x|} \phi' \left ( \frac{|y-x|}{\tau} \right )
\left [ \left | \partial_{\eta_x} u \right |^2 - 2 N_\phi (x, \tau) \partial_{\eta_x} u \cdot u + N_\phi(x,\tau)^2 |u|^2 \right ] \, \mathrm{d}y \, \mathrm{d} \tau \\
= \int_{1/4}^4 \left [ E_\phi(x,\tau) - 2 \tau N_\phi(x,\tau) D_\phi(x,\tau) + N_\phi(x,\tau)^2 H_\phi(x,\tau) \right ] \, \mathrm{d}\tau \\
= \int_{1/4}^4 \left [ E_\phi(x,\tau) - \tau N_\phi(x,\tau) D_\phi(x,\tau) \right ] \, \mathrm{d}\tau .
\end{aligned}
\label{FirstTermRe}
\end{equation}
Since $H_\phi(0,1) = 1$, by \eqref{LogHeight} and \eqref{LocalHeight}, we can bound $H_\phi(x,\tau)$ uniformly 
from above by a constant depending on $\overline{N}$, whenever $\tau \in [1/4,4]$.
As a result, we can estimate the right-hand side of \eqref{FirstTermRe} by a constant multiple of the following:
\begin{equation}
\begin{aligned}
\int_{1/4}^4 \frac{2}{\tau H_\phi (x, \tau)} \left [ E_\phi(x,\tau) - \tau N_\phi(x,\tau) D_\phi(x,\tau) \right ] \, \mathrm{d}\tau & =
\int_{1/4}^4 \partial_\tau N_\phi(x,\tau) \, \mathrm{d} \tau \\ & = W_{1/4}^4(x) \leq W_{1/8}^4(x),
\end{aligned}
\label{FirstTermEst}
\end{equation}
where the last two inequalities are due to \eqref{FreqDerivScale} and the monotonicity of $N_\phi(x,r)$ respectively. 
Hence, from \eqref{FirstTermRe} and \eqref{FirstTermEst},
we conclude that the first term on the right-hand side of \eqref{TwoTerms} is bounded from above by $C W_{1/4}^4(x) $.

Finally, we can estimate the second term on the right-hand side of \eqref{TwoTerms} by a constant multiple of:
\begin{equation}
\left [ N_{\phi} \left ( x, 4 \right ) - N_{\phi} \left ( x, 1/8 \right )  \right ]^2 H_\phi(0,1) \leq C \left ( \overline{N} \right ) W_{1/8}^4(x),
\end{equation}
by the monotonicity and positivity of $N_\phi(x,r)$, \eqref{LogHeight}, \eqref{LocalHeight} and the normalization $H_\phi(0,1) = 1$.
Consequently, the proof of \eqref{QuantWeissEstimate} is complete.
\end{proof}

An important consequence of the quantitative Weiss-type estimate \eqref{QuantWeissEstimate} is the following frequency pinching estimate introduced in \cite{DLMSV}.

\begin{lemma} \label{Pinching}
There exists a $C = C \left ( \overline{N} \right ) > 0$ such that for $u$ and $\phi$ as above, $x_1$, $x_2 \in B_{1/8} (0)$ and $ \left | x_1 - x_2 \right | \leq r/4$, 
whenever $y$ and $z$ lie on the segment joining $x_1$ and $x_2$, we have:
\begin{equation}
\left | N_\phi ( y, r ) - N_\phi (z, r) \right | \leq 
C \left [ \left ( W_{r/8}^{4r} \left ( x_1 \right ) \right )^{1/2} + \left ( W_{r/8}^{4r} \left ( x_2 \right ) \right )^{1/2} \right ] |y-z|/r.
\label{PinchingEstimate}
\end{equation}
\end{lemma}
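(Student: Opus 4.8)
The plan is to express the difference $N_\phi(y,r) - N_\phi(z,r)$ along the segment from $x_1$ to $x_2$ as an integral of a directional derivative, and then bound that derivative using the variational identities of Lemma \ref{VarIdentities} together with the quantitative Weiss estimate \eqref{QuantWeissEstimate}. By rescaling (replacing $u(w)$ with $u(x_1 + rw)$, which multiplies frequencies by $1$ and only rescales the radii), we may assume $r = 4$, so that $x_1, x_2 \in B_{1/8}(0)$, $|x_1 - x_2| \leq 1$, and we must control $|N_\phi(y,4) - N_\phi(z,4)|$ by $C[(W_{1/2}^{16}(x_1))^{1/2} + (W_{1/2}^{16}(x_2))^{1/2}]|y-z|/4$; after the intended normalizations this is exactly the regime of Lemma \ref{QuantWeiss} (applied at the appropriate dyadic scales). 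Since $N_\phi = r D_\phi / H_\phi$, for a unit vector $v$ in the direction of the segment we compute, using \eqref{DirDerivSpace} and \eqref{HeightDerivSpace},
\begin{equation*}
\partial_v N_\phi(w,r) = \frac{r \, \partial_v D_\phi(w,r)}{H_\phi(w,r)} - \frac{r D_\phi(w,r) \, \partial_v H_\phi(w,r)}{H_\phi(w,r)^2},
\end{equation*}
and the key algebraic point is that both numerator terms can be rewritten so that the factor $\partial_v u$ appears paired against the \emph{Weiss defect} $(y-w)\cdot \nabla u - N_\phi(w,|y-w|) u$ rather than against $\nabla u$ or $u$ separately; this is the same cancellation used in deriving \eqref{FreqDerivScale}, now carried out in the spatial direction.

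Concretely, I would first establish a pointwise identity of the form
\begin{equation*}
\partial_v N_\phi(w,r) = \frac{-2}{r H_\phi(w,r)} \int_{\mathbf{R}^3} \phi'\!\left(\tfrac{|w-y|}{r}\right) \partial_v u(y) \cdot \Big[ \big( (y-w)\cdot \nabla u - N_\phi(w,|y-w|) u \big) |y-w|^{-1} + (\text{lower order}) \Big] \, \mathrm{d}y,
\end{equation*}
where the ``lower order'' piece collects terms involving the difference $N_\phi(w,|y-w|) - N_\phi(w,r)$ on the annular support of $\phi'(\,\cdot/r\,)$, which is itself controllable by $W_{r/2}^{r}(w) \leq W_{r/8}^{4r}(w)$ via \eqref{FreqDerivScale} and monotonicity. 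Applying the Cauchy–Schwarz inequality on the support $B_r(w)\setminus B_{r/2}(w)$, using $|\partial_v u| \leq |\nabla u|$ and the bound $D_\phi(w,r) \lesssim H_\phi(w,r)$ coming from $N_\phi \leq \overline{N}$ plus the local comparison \eqref{LocalFreq}, the factor $\|\nabla u\|_{L^2}$ over that annulus is bounded (it is comparable to $H_\phi$ up to the uniform constants from Lemma \ref{HFBounds}), while the factor $\|(y-w)\cdot\nabla u - N_\phi(w,|y-w|)u\|_{L^2(B_{2}(w)\setminus B_{1/4}(w))}$ (after rescaling, over an annulus on which $\phi'(\cdot/r)$ is supported for all $w$ on the segment, since $|x_1 - x_2|$ is comparable to at most $r/4$) is bounded by $C (W_{r/8}^{4r}(w))^{1/2}$ by Lemma \ref{QuantWeiss}. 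This gives $|\partial_v N_\phi(w,r)| \leq C[(W_{r/8}^{4r}(x_1))^{1/2} + (W_{r/8}^{4r}(x_2))^{1/2}]/r$ uniformly for $w$ on the segment — here I use that $W_{r/8}^{4r}(w)$ for $w$ between $x_1$ and $x_2$ is itself dominated by the sum at the endpoints, which follows from enlarging the annulus of integration in \eqref{QuantWeissEstimate} and the fact that $H_\phi$, $D_\phi$, $E_\phi$ at nearby centers are mutually comparable by Lemma \ref{HFBounds}. Integrating $\partial_v N_\phi$ along the segment from $y$ to $z$ then yields \eqref{PinchingEstimate}.

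The main obstacle I anticipate is the bookkeeping of the ``lower order'' terms arising because $N_\phi(w,|y-w|)$ — the radius-dependent frequency appearing in the Weiss defect — differs from the constant $N_\phi(w,r)$ that one would like to factor out; one must verify that this mismatch contributes only a term of size $W_{r/8}^{4r}(w)$ (not merely its square root), so that after Cauchy–Schwarz it is absorbed. A secondary technical point is ensuring all the uniform constants genuinely depend only on $\overline{N}$ and $\phi$: this requires that as $w$ ranges over the segment joining $x_1$ and $x_2$, the quantities $H_\phi(w,\cdot)$, $D_\phi(w,\cdot)$, $E_\phi(w,\cdot)$ on the relevant annuli stay comparable to their values at $x_1$, which is precisely what the local bounds \eqref{LocalHeight}, \eqref{LocalFreq} provide once $|x_1 - x_2| \leq r/4$. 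Modulo this, the argument is a direct spatial analogue of the radial monotonicity computation, and the quantitative Weiss inequality does all the real work.
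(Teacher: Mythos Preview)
Your overall strategy --- compute $\partial_v N_\phi(w,r)$ via the spatial variational identities, rewrite the integrand so that the Weiss defect appears, apply Cauchy--Schwarz, and integrate along the segment --- is exactly the paper's strategy, and your handling of the ``lower order'' mismatch $N_\phi(w,|y-w|)-N_\phi(w,r)$ is fine. But there is a genuine gap in one step.

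The issue is that your pairing produces the Weiss defect centered at the \emph{running point} $w$, and after Cauchy--Schwarz you arrive at a bound of the form $|\partial_v N_\phi(w,r)| \leq C\big(W_{r/8}^{4r}(w)\big)^{1/2}/r$. To finish you then assert that $W_{r/8}^{4r}(w)$ is dominated by $W_{r/8}^{4r}(x_1)+W_{r/8}^{4r}(x_2)$, justified by ``enlarging the annulus in \eqref{QuantWeissEstimate}'' and the local comparability of $H_\phi,D_\phi,E_\phi$. This does not work: \eqref{QuantWeissEstimate} is an upper bound for the annular integral in terms of $W$, so enlarging the annulus gives you nothing about $W$ itself at a shifted center; and comparing $W_{r/8}^{4r}(w)=N_\phi(w,4r)-N_\phi(w,r/8)$ to its values at $x_1,x_2$ amounts to controlling $|N_\phi(w,s)-N_\phi(x_i,s)|$ at two radii, which is precisely the pinching estimate you are trying to prove. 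So the argument, as written, is circular.

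The paper avoids this by a simple but decisive algebraic trick you are missing: since $v=x_2-x_1$, for every integration point $z$ one has
\[
\partial_v u(z)=(z-x_1)\cdot\nabla u(z)-(z-x_2)\cdot\nabla u(z)=\mathcal{E}_1(z)-\mathcal{E}_2(z)+\Xi(z)\,u(z),
\]
where $\mathcal{E}_i(z)=(z-x_i)\cdot\nabla u(z)-N_\phi(x_i,|z-x_i|)\,u(z)$ is the Weiss defect \emph{centered at the endpoint} $x_i$, and $\Xi(z)=N_\phi(x_1,|z-x_1|)-N_\phi(x_2,|z-x_2|)$. Substituting this into \eqref{PinchingDeriv1} produces three terms. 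The first two are handled by Cauchy--Schwarz and Lemma~\ref{QuantWeiss} \emph{applied directly at $x_1$ and $x_2$}, because $|w-x_i|\leq r/4$ forces the support of $\phi'(|\cdot-w|/r)$ to sit inside the annulus $B_{2r}(x_i)\setminus B_{r/4}(x_i)$ where \eqref{QuantWeissEstimate} applies. For the third term one writes $\Xi=A+B+C$ with $A=N_\phi(x_1,r)-N_\phi(x_2,r)$ constant; the $A$-contribution vanishes identically by \eqref{IBP}, and $|B|+|C|\leq W_{r/8}^{4r}(x_1)+W_{r/8}^{4r}(x_2)$ on the support. Thus the endpoint pinchings appear directly, with no need to control $W$ at intermediate points.
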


\begin{proof}
We can assume without loss of generality that $r = 1$ and $H_\phi(0,1) = 1$.
We would like to bound $\left | \partial_v N_\phi (x,1) \right |$ uniformly in $x$, where $v = x_2 - x_1$ and $x$ lies on the segment joining $x_1$ and $x_2$. 

By \eqref{DirDerivSpace} and \eqref{HeightDerivSpace}, we have:
\begin{equation}
\partial_v N_\phi (x,1) = \frac{2}{H_\phi (x,1)} \int_{\mathbf{R}^3} \left [ \partial_v u \cdot  \partial_{\eta_x} u - N_\phi (x,1) \partial_v u \cdot u \right ] \, \mathrm{d}\mu_x,
\label{PinchingDeriv1}
\end{equation}
where $\eta_x(y) = y - x = |y-x| \nu_x (y)$ and $\mu_x = - |y-x|^{-1} \phi' \left ( |y-x| \right ) \, \mathrm{d}y$.
We can write:
$$
\partial_v u(z) = \partial_{\eta_{x_1}} u(z) - \partial_{\eta_{x_2}} u(z) = \mathcal{E}_1(z) - \mathcal{E}_2(z) + \Xi (z) u(z),
$$
where:
\begin{equation*}
\begin{aligned}
\mathcal{E}_1(z) = \partial_{\eta_{x_1}} u(z) - N_\phi \left ( x_1 , \left | z - x_1 \right | \right ) u(z), \\
\mathcal{E}_2(z) = \partial_{\eta_{x_2}} u(z) - N_\phi \left ( x_2 , \left | z - x_2 \right | \right ) u(z), \\
\Xi (z) =  N_\phi \left ( x_1, \left | z - x_1 \right | \right ) - N_\phi \left ( x_2, \left | z - x_2 \right | \right ).
\end{aligned}
\end{equation*}
Substituting the expression for $\partial_v u$ in \eqref{PinchingDeriv1}, we obtain:
$$
\partial_v N_\phi (x,1) = I + II + III,
$$
where:
\begin{equation*}
\begin{aligned}
I = \frac{2}{H_\phi (x,1)} \int_{\mathbf{R}^3} \left ( \mathcal{E}_1 - \mathcal{E}_2 \right ) \cdot \partial_{\eta_x} u \, \mathrm{d} \mu_x, \\
II = \frac{2 D_\phi (x,1)}{H_\phi (x,1)^2} \int_{\mathbf{R}^3} \left ( \mathcal{E}_2 - \mathcal{E}_1 \right ) \cdot u \, \mathrm{d} \mu_x, \\
III = \frac{2}{H_\phi (x,1)} \int_{\mathbf{R}^3} \left [ \partial_{\eta_x} u \cdot u - N_\phi(x,1) |u|^2 \right ] \Xi(y) \, \mathrm{d} \mu_x. 
\end{aligned}
\end{equation*}

By the Cauchy-Schwartz inequality:
\begin{equation}
I^2 \leq \frac{4}{H_\phi (x,1)^2} 
\int_{\mathbf{R}^3} \left | \mathcal{E}_1 - \mathcal{E}_2 \right |^2 \, \mathrm{d}\mu_x \int_{\mathbf{R}^3} \left | \nabla u \right |^2 \, \mathrm{d}\mu_x.
\label{PinchingTerm1}
\end{equation}
Moreover, by \eqref{LocalFreq}, \eqref{LocalHeight}, \eqref{LogHeight} and $H_\phi(0,1) = 1$, 
we can bound $H_\phi(x,\tau)$ from below and $N_\phi(x,\tau)$ from above by uniform constants depending on $\overline{N}$, whenever $\tau \in [1/4,4]$.
As a result, \eqref{PinchingTerm1} implies: 
\begin{equation}
|I| \leq C \left ( \int_{\mathbf{R}^3} \left [ \left | \mathcal{E}_1 \right |^2 + \left | \mathcal{E}_2 \right |^2 \right ] \, \mathrm{d} \mu_x \right )^{1/2}.
\label{PinchingTerm1Int}
\end{equation}
Using the same ingredients as above, we also obtain the following estimate:
\begin{equation}
|II| \leq C \left ( \int_{\mathbf{R}^3} \left [ \left | \mathcal{E}_1 \right |^2 + \left | \mathcal{E}_2 \right |^2 \right ] \, \mathrm{d} \mu_x \right )^{1/2}.
\label{PinchingTerm2Int}
\end{equation}

Next we claim:
\begin{equation}
\int_{\mathbf{R}^3} \left | \mathcal{E}_i \right |^2 \, \mathrm{d}\mu_x \leq C W_{1/8}^4 \left (x_i \right ), \quad i=1,2.
\label{PinchingTerms}
\end{equation}
We can express the left-hand side of \eqref{PinchingTerms} as follows:
\begin{equation*}
\int_{\mathbf{R}^3} \left | \mathcal{E}_i \right |^2 \, \mathrm{d}\mu_x = 
\int_{\mathbf{R}^3} \frac{-1}{|y-x|} \phi' \left ( |y-x| \right ) \left | \mathcal{E}_i \right |^2 \, \mathrm{d}y =
\int_{\mathbf{R}^3} \left | \mathcal{E}_i \right |^2 m(y) \, \mathrm{d}y.
\end{equation*}
We observe that since $\phi'(s) = -2$ on $s \in [1/2,1]$ and $0$ elsewhere, $0 \leq \frac{-1}{s}\phi'(s) \leq 4$.
Moreover, since $x$ lies on the segment joining $x_1$ and $x_2$, 
$$ | y - x | \leq \left | y - x_i \right | + \left | x_i - x \right | \leq 1/4 + 1/4 = 1/2,$$ 
when $\left | y - x_i \right | \leq 1/4 $ for $i=1$ or $2$.
Likewise,
$$ | y - x | \geq \left | y - x_i \right | - \left | x_i - x \right | \geq 2 - 1/4 \geq 1, $$
when $ \left | y - x_i \right | \geq 2$ for $i=1$ or $2$.
Hence, $m(y)$ vanishes on the complement of $B_2 \left ( x_i \right ) \backslash B_{1/4} \left ( x_i \right )$.
Therefore, we get:
\begin{equation*}
\int_{\mathbf{R}^3} \left | \mathcal{E}_i \right |^2 \, \mathrm{d}\mu_x \leq 
4 \int_{B_2 \left ( x_i \right ) \backslash B_{1/4} \left ( x_i \right )} \left | \left ( y - x_i \right ) \cdot \nabla u - N_\phi \left ( x_i, \left | y - x_i \right | \right ) u \right |^2
\, \mathrm{d}y,
\end{equation*}
and \eqref{PinchingTerms} follows immediately from \eqref{QuantWeissEstimate}.

We note that \eqref{PinchingTerm1Int}, \eqref{PinchingTerm2Int} and \eqref{PinchingTerms} imply:
\begin{equation*}
|I| + |II| \leq C \left ( W_{1/8}^4 \left ( x_1 \right ) +  W_{1/8}^4 \left ( x_2 \right ) \right )^{1/2} 
\leq C \left [ \left ( W_{1/8}^4 \left ( x_1 \right ) \right )^{1/2} +  \left ( W_{1/8}^4 \left ( x_2 \right ) \right )^{1/2} \right ],
\end{equation*}
and there remains to estimate $III$.

We can write $\Xi = A + B + C$, where:
\begin{equation*}
\begin{aligned}
A = N_\phi \left ( x_1 , 1 \right ) - N_\phi \left ( x_2, 1 \right ), \\
B(z) = N_\phi \left ( x_1 , \left | z - x_1 \right | \right ) - N_\phi \left ( x_1, 1 \right ), \\
C(z) = N_\phi \left ( x_2, 1 \right ) - N_\phi \left ( x_2 , \left | z - x_2 \right | \right ).
\end{aligned}
\end{equation*}
From \eqref{IBP} we get:
\begin{equation}
\begin{aligned}
\int_{\mathbf{R}^3} \left [ \partial_{\eta_x} u \cdot u - N_\phi(x,1) |u|^2 \right ] A \, \mathrm{d} \mu_x & =
A \left [ \int_{\mathbf{R}^3} \partial_{\eta_x} u \cdot u \, \mathrm{d} \mu_x -  N_\phi(x,1) \int_{\mathbf{R}^3} |u|^2 \, \mathrm{d} \mu_x \right ] \\
& = A \left [ - \int \phi' \left ( |y-x| \right ) \partial_{\nu_x} u \cdot u \, \mathrm{d}y - D_\phi(x,1) \right ] \\
& = 0.
\end{aligned}
\label{NullTerm}
\end{equation}
Secondly, since $\mu_x$ is supported in $B_1(x) \backslash B_{1/2}(x)$, and $x$ lies on the segment joining $x_1$ and $x_2$, while $\left | x_1 - x_2 \right | \leq 1/4$,
we observe: 
$$ 
1/8 < | z - x | - \left | x_i - x \right | \leq \left | z - x_i \right | \leq | z - x | + \left | x - x_i \right | < 4, 
$$
for $i=1,2$, which implies for every $z \in B_1(x) \backslash B_{1/2}(x)$ and every $x$ on the segment joining $x_1$ and $x_2$:
\begin{equation}
|B(z)| + |C(z)| \leq W_{1/8}^4 \left ( x_1 \right ) +  W_{1/8}^4 \left ( x_2 \right ).
\label{NonNullTerms}
\end{equation}
Hence, using \eqref{NullTerm}, \eqref{NonNullTerms}, \eqref{LocalFreq} and \eqref{LogHeight}, we estimate:
\begin{equation}
\begin{aligned}
|III| & \leq  2 \frac{ \left [ W_{1/8}^4 \left ( x_1 \right ) +  W_{1/8}^4 \left ( x_2 \right ) \right ] }{H_\phi (x,1)}
\left [ N_\phi(x,1) \int_{\mathbf{R}^3} |u|^2 \, \mathrm{d}\mu_x  + \int_{\mathbf{R}^3} |u||\nabla u| \, \mathrm{d}\mu_x \right ] \\
& \leq 2 \frac{ \left [ W_{1/8}^4 \left ( x_1 \right ) +  W_{1/8}^4 \left ( x_2 \right ) \right ] }{H_\phi (x,1)}
\left [ \left ( 1 + N_\phi(x,1) \right ) H_\phi(x,1) + C D_\phi (x,2) \right ] \\
& \leq C \left ( 1 + N_\phi(x,1) + H_\phi (x,1)^{-1} N_\phi(x,2) H_\phi(x,2)  \right )  \left [ W_{1/8}^4 \left ( x_1 \right ) +  W_{1/8}^4 \left ( x_2 \right ) \right ] \\
& \leq C \left ( 1 + N_\phi(x,1) + C \left ( \overline{N} \right ) N_\phi(x,2) \right ) \left [ W_{1/8}^4 \left ( x_1 \right ) +  W_{1/8}^4 \left ( x_2 \right ) \right ] \\
& \leq C \left ( \overline{N} \right ) \left [ W_{1/8}^4 \left ( x_1 \right ) +  W_{1/8}^4 \left ( x_2 \right ) \right ].
\end{aligned}
\end{equation}
Thus, using \eqref{LocalFreq} once again, we conclude that for every $x$ on the segment joining $x_1$ and $x_2$:
\begin{equation*}
\left | \partial_v N_\phi (x,1) \right | \leq |I| + |II| + |III| \leq C \left ( \overline{N} \right ) 
\left [ \left ( W_{1/8}^4 \left ( x_1 \right ) \right )^{1/2} + \left ( W_{1/8}^4 \left ( x_2 \right ) \right )^{1/2} \right ],
\end{equation*}
which immediately implies \eqref{PinchingEstimate}.

\end{proof}

We need the following unique continuation lemma as a technical ingredient in proving further frequency pinching estimates:

\begin{lemma} \label{UniqueContinuation}
If $u$ and $v$ are energy-minimizing harmonic maps into $\mathbf{D}_\kappa$ in a connected domain $\Omega$ and $u = v$ in an open set $U \subset \Omega$,
then $u = v$ in $K$ for any $K \subset \subset \Omega$.
\end{lemma}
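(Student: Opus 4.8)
The plan is to deduce the statement from the classical unique continuation principle for harmonic maps into a real-analytic target, exploiting the fact that the defect sets of $u$ and $v$ are too small — of Hausdorff dimension at most $1$ — to disconnect $\Omega$ or to obstruct a continuity argument. Write $Z = u^{-1}\{0\} \cup v^{-1}\{0\}$. Two structural facts, both already available, enter here. First, by the regularity theory for energy-minimizing maps into $\mathbf{D}_\kappa$ (cf. \cite[Lemma 2.8]{AHL}), $u$ and $v$ are continuous on $\Omega$ and, on $\Omega \setminus u^{-1}\{0\}$ and $\Omega \setminus v^{-1}\{0\}$ respectively, are smooth classical harmonic maps taking values in the smooth manifold $\mathbf{D}_\kappa \setminus \{0\}$. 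Second, by dimension reduction (cf. \cite[Lemma 2.11]{AHL}), $\dim_{\mathcal{H}} u^{-1}\{0\} \le 1$ and $\dim_{\mathcal{H}} v^{-1}\{0\} \le 1$, so that $Z$ is relatively closed in $\Omega$ with $\dim_{\mathcal{H}} Z \le 1$; in particular $Z$ has empty interior and Lebesgue measure zero.

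I would then record that $\mathbf{D}_\kappa \setminus \{0\}$ is a real-analytic Riemannian manifold. Parametrizing its points as $\big(\sqrt{\kappa-1}\,\rho,\, [\rho\,\omega]\big)$ with $\rho \in (0,\infty)$ and $\omega \in \mathbf{RP}^2$, the metric inherited from the Euclidean structure on $\mathbf{C}_\kappa$ is $\kappa\, d\rho^2 + \rho^2\, g_{\mathbf{RP}^2}$, which is real-analytic on $(0,\infty) \times \mathbf{RP}^2$. Consequently the classical unique continuation theorem for harmonic maps — two harmonic maps from a connected domain into a real-analytic manifold that agree on a nonempty open subset agree throughout — applies on connected open subsets of $\Omega \setminus Z$.

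I would then argue as follows. Since $Z$ is relatively closed in $\Omega$ and has Hausdorff, hence topological, dimension at most $1 = 3 - 2$, the open set $\Omega \setminus Z$ is connected, because a relatively closed subset of codimension greater than one does not locally separate $\mathbf{R}^3$. As $U$ is a nonempty open set and $Z$ is Lebesgue-null, $U \setminus Z$ is a nonempty open subset of $\Omega \setminus Z$ on which $u$ and $v$ coincide as smooth harmonic maps into $\mathbf{D}_\kappa \setminus \{0\}$. Unique continuation therefore forces $u = v$ on all of $\Omega \setminus Z$. Finally, $\Omega \setminus Z$ is dense in $\Omega$ (as $Z$ has empty interior) and $u$, $v$ are continuous on $\Omega$, so $u = v$ on $\Omega$, and in particular on every $K \subset\subset \Omega$.

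The only genuine obstacle is the passage from the regular set to all of $\Omega$: one must check that deleting $Z$ leaves the domain connected — which is exactly where the dimension bound $\dim_{\mathcal{H}} Z \le 1 < 2$ does the essential work — and one must verify the routine but necessary fact that $\mathbf{D}_\kappa \setminus \{0\}$ carries a real-analytic metric, so that the off-the-shelf harmonic-map unique continuation theorem is applicable. Once $u = v$ has been established away from $Z$, it extends across $Z$ at no cost, by continuity of $u$ and $v$.
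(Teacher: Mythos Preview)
Your argument is correct and follows essentially the same route as the paper: remove the combined zero sets, use the dimension bound to keep the complement connected, apply unique continuation for the harmonic map system there, and extend across by continuity. The only cosmetic difference is that the paper invokes Aronszajn's unique continuation theorem for elliptic systems directly on the harmonic map equation $\Delta w + A_w(\nabla w,\nabla w)=0$, whereas you package this as the harmonic-map unique continuation theorem for real-analytic targets; these amount to the same thing.
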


\begin{proof}
Both $u^{-1} \{ 0 \}$ and $v^{-1} \{ 0 \}$ have Hausdorff dimension less than or equal to $1$ in $\Omega$ (cf. \cite[Lemma 2.11]{AHL}.)
Therefore, $\Omega' = \Omega \backslash \left ( u^{-1} \{0\} \cup u^{-1} \{0\} \right )$ is open and connected.
Note that in any open subset of $\Omega'$, $u$ and $v$ are smooth solutions of the harmonic map equation:
\begin{equation}
\Delta w + A_w \left ( \nabla w , \nabla w \right ) = 0,
\label{HarmonicMap}
\end{equation}
where $A_y$ denotes the second fundamental form of $\mathbf{D}_\kappa$ at $y$, cf. \cite[Appendix 2.12.3]{SimonBook} for details.
Then by the classical unique continuation theorem for elliptic systems in \cite{Aronszajn}, $u = v$ in $\Omega'$. 
Moreover, $u$ and $v$ are locally H\"older continuous in $\Omega$ (cf. \cite[Lemma 2.8]{AHL}.) 
Hence, by continuity $u = v$ in any $K \subset \subset \Omega$. 
\end{proof}

We end this section with another useful technical result from \cite{DLMSV}.
This lemma can be interpreted as a refined version of \cite[Lemma 4.2]{AHL}.

\begin{lemma} \label{FrequencyControl}
For $u$ and $\phi$ as above and $\rho, \tilde{\rho}, \overline{\rho}, \hat{\rho} \in (0,1)$ given, for all $\delta > 0$, there exists an
$ \epsilon = \epsilon \left ( \overline{N}, \rho, \tilde{\rho}, \overline{\rho}, \hat{\rho}, \delta \right ) > 0$ such that, if $x_1, x_2 \in B_1(0)$ satisfy
$\mathrm{dist} \left ( x_1, x_2 \right ) \geq \rho$, and
\begin{equation} 
W^2_{\tilde{\rho}} \left ( x_i \right ) = N_{\phi} \left ( x_i , 2 \right ) - N_{\phi} \left ( x_i , \tilde{\rho} \right ) < \epsilon, \quad i = 1,2,
\label{SmallDrop}
\end{equation}
then:
\begin{equation}
u^{-1} \{ 0 \} \cap \left ( B_1(0) \backslash B_{\hat{\rho}} (V) \right ) = \emptyset,
\label{HalfReifenberg} 
\end{equation}
and for all $y, y' \in B_1(0) \cap V$ and for all $r, r' \in \left [ \overline{\rho}, 1 \right ]$:
\begin{equation}
\left | N_{\phi} (y,r) - N_{\phi} \left ( y', r' \right ) \right | < \delta, 
\label{DropControl}
\end{equation}
where $V = x_1 \, + \, \mathrm{span} \left \{ x_2 - x_1 \right \}$.
\end{lemma}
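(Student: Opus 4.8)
The plan is to argue by contradiction and compactness, exactly in the spirit of \cite[Lemma 4.2]{AHL} but quantified through the frequency. Suppose the statement fails for some fixed $\rho,\tilde\rho,\overline\rho,\hat\rho$ and some $\delta>0$. Then there is a sequence of energy‑minimizing maps $u_k\colon B_{64}(0)\to\mathbf{D}_\kappa$ with $N_{\phi,u_k}(0,64)\le\overline N$, points $x_1^k,x_2^k\in B_1(0)$ with $\mathrm{dist}(x_1^k,x_2^k)\ge\rho$, satisfying $W^2_{\tilde\rho}(x_i^k)<1/k$ for $i=1,2$, yet for which either \eqref{HalfReifenberg} or \eqref{DropControl} fails. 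By the compactness theorem for energy minimizers into $\mathbf{D}_\kappa$ (cf. \cite[Lemma 2.7]{AHL}), after passing to a subsequence, $u_k\to u$ strongly in $H^1_{\mathrm{loc}}$ with $u$ again minimizing, and $x_i^k\to x_i$ with $\mathrm{dist}(x_1,x_2)\ge\rho$, so $x_1\ne x_2$. By strong $H^1$ convergence the smoothed quantities converge: $D_{\phi,u_k}\to D_{\phi,u}$, $H_{\phi,u_k}\to H_{\phi,u}$ (using that $H_\phi$ is an average over an annulus, so it passes to the limit), hence $N_{\phi,u_k}(x_i^k,r)\to N_{\phi,u}(x_i,r)$ for every fixed $r\in[\tilde\rho,2]$. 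Therefore the limit satisfies $W^2_{\tilde\rho,u}(x_i)=N_{\phi,u}(x_i,2)-N_{\phi,u}(x_i,\tilde\rho)=0$ for $i=1,2$.

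Next I would use the monotonicity formula \eqref{FreqDerivScale} together with the Weiss‑type identity behind Lemma \ref{QuantWeiss}: the vanishing of $W^2_{\tilde\rho,u}(x_i)$ forces $N_{\phi,u}(x_i,\cdot)$ to be constant on $[\tilde\rho,2]$, and in the equality case of \eqref{FreqDerivScale} (equivalently the vanishing of the right‑hand side of \eqref{ClassicalWeissIntegral}) $u$ must be homogeneous of degree $\alpha_i:=N_{\phi,u}(x_i,2)$ about the point $x_i$ on the annulus $B_2(x_i)\setminus B_{\tilde\rho}(x_i)$. Two distinct homogeneity points for the same map force, by the standard cone‑splitting / unique continuation argument (Lemma \ref{UniqueContinuation}), that $u$ is invariant in the direction $x_2-x_1$, i.e.\ $u$ is translation‑invariant along the line $V=x_1+\mathrm{span}\{x_2-x_1\}$ where all these balls overlap, and homogeneous of a common degree $\alpha$ about every point of $V$. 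By the classification of such maps in $\mathbf{R}^3$ — i.e.\ $u$ is, up to the line direction, a one‑ or two‑variable homogeneous minimizer, and by \cite[Lemma 3.1, Lemma 3.3]{AHL} the only possibility with a nonempty zero set away from a line is the cylindrical profile of frequency $1/(2\sqrt\kappa)$ whose zero set is exactly $V$ — we conclude $u^{-1}\{0\}\cap B_1(0)\subset V$. In particular $u^{-1}\{0\}\cap(B_1(0)\setminus B_{\hat\rho}(V))=\emptyset$ with room to spare, and $N_{\phi,u}(y,r)\equiv\alpha$ for all $y\in V\cap B_1(0)$ and all $r\in[\overline\rho,1]$, so $|N_{\phi,u}(y,r)-N_{\phi,u}(y',r')|=0<\delta$.

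Finally I would transfer these two facts back to $u_k$ for $k$ large. For \eqref{DropControl}: by the joint continuity of $N_\phi$ in $(x,r)$ from Lemma \ref{VarIdentities}, uniform on the compact set $\{(y,r):y\in\overline{B_1(0)}\cap V,\ r\in[\overline\rho,1]\}$ where $H_{\phi,u}$ is bounded below (by \eqref{LocalHeight}, since $u$ is not identically zero near $0$, as its frequency is finite and positive), together with $N_{\phi,u_k}\to N_{\phi,u}$ uniformly on that compact set — which follows from $H^1_{\mathrm{loc}}$ convergence plus the uniform lower bound on $H_{\phi,u_k}$ from \eqref{LocalHeight} and the $\overline N$‑bound — we get $|N_{\phi,u_k}(y,r)-N_{\phi,u_k}(y',r')|<\delta$ for $k$ large, contradicting the assumed failure. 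For \eqref{HalfReifenberg}: if it failed along the subsequence there would be $z_k\in u_k^{-1}\{0\}\cap(B_1(0)\setminus B_{\hat\rho}(V))$; passing to a further subsequence $z_k\to z$, and by uniform convergence $|u_k|\to|u|$ locally uniformly (Hölder estimates of \cite[Lemma 2.8]{AHL} are uniform under the energy bound), $z\in u^{-1}\{0\}$, yet $z\notin B_{\hat\rho}(V)$, contradicting $u^{-1}\{0\}\cap B_1(0)\subset V$. This produces the desired $\epsilon$.

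The main obstacle I expect is the rigidity/classification step in the middle paragraph: extracting from "$W$ vanishes at two distinct points" the conclusion that the limiting minimizer is exactly the cylindrical profile over the line $V$. This requires combining the equality case of the Weiss/monotonicity formula (homogeneity about each $x_i$), a cone‑splitting argument upgrading two homogeneity centers to translation invariance, the unique continuation Lemma \ref{UniqueContinuation}, and crucially the $\mathbf{R}^3$ classification of homogeneous minimizers into $\mathbf{D}_\kappa$ from \cite[Section 3]{AHL}; one must also rule out the case that the limit is a constant or has empty zero set near $0$ — which is where the quantitative lower frequency bound $1/(2\sqrt\kappa)$ and the topological persistence of zeros from \cite[Lemma 4.2]{AHL} enter to guarantee the zero set of $u_k$, hence of $u$, actually meets $B_1(0)$.
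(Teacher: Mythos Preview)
Your approach is the same as the paper's: contradiction plus compactness, then use the vanishing of the frequency drop at two separated centers to force homogeneity about each $x_i$, upgrade via unique continuation (Lemma~\ref{UniqueContinuation}) to translation invariance along $V$, and derive a contradiction. Two small corrections are worth making.

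First, the nontriviality of the limit is handled in the paper not by invoking persistence of zeros or the lower frequency bound $1/(2\sqrt\kappa)$, but simply by normalizing $H_\phi(0,64)=1$ along the sequence. This is legitimate because $\mathbf{D}_\kappa$ is a cone, so scalar multiplication preserves both energy minimality and the frequency; with this normalization the limit is automatically nonzero and $N_{\phi,u_k}\to N_{\phi,u}$ at every center and scale. Your proposed fix via \cite[Lemma 4.2]{AHL} does not address the possibility $u\equiv 0$.

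Second, you invoke more than is needed in the rigidity step. The paper does not appeal to the classification in \cite[Section 3]{AHL} and does not identify the limit as the cylindrical profile. Once the limit $u$ is translation-invariant along $v=x_2-x_1$ and homogeneous about $x_1$, a zero $y$ with $\mathrm{dist}(y,V)\ge\hat\rho$ generates (under dilation about $x_1$ and translation by $v$) a two-dimensional set of zeros, contradicting $\dim_{\mathcal H}u^{-1}\{0\}\le 1$ from \cite[Lemma 2.11]{AHL}. The argument for \eqref{DropControl} is likewise immediate: translation invariance and homogeneity force $N_{\phi,u}(y,r)$ to be the common constant $\alpha$ for every $y\in V$ and $r>0$, with no classification required.
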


\begin{proof}
The basic idea is to assume by contradiction that the lemma does not hold, to pass to a limit map by a compactness argument, and to show that either \eqref{HalfReifenberg} or \eqref{DropControl}
failing for the limit map gives a contradiction. We will prove the existence of an $\epsilon$ ensuring \eqref{HalfReifenberg} first, then prove the existence of an $\epsilon$ ensuring
\eqref{DropControl}. Clearly, taking the minimum of the two will imply the claim.

Firstly, suppose there exist positive $\rho$, $\tilde{\rho}$ and  $\hat{\rho}$ for which \eqref{HalfReifenberg} fails to hold for every $\epsilon > 0$.
Then we have a sequence of maps $u_k$, which are energy minimizing in $B_{64}(0)$, satisfying $N_\phi(0,64) \leq \overline{N}$, $H_\phi(0,64) = 1$, 
and for a sequence of pairs $\left \{ x_1^k, x_2^k \right \} \subset B_1(0)$ such that 
$\mathrm{dist} \left ( x_1, x_2 \right ) \geq \rho$, \eqref{SmallDrop} holds with $\epsilon_k \to 0$ and fixed $\tilde{\rho}$,
while there exist $y^k \in u_k^{-1} \{ 0 \} \cap \left ( B_1(0) \cap B_{\hat{\rho}} \left ( V^k \right ) \right )$ with fixed $\hat{\rho}$ 
and $V^k = x_1^k + \mathrm{span} \left \{ x_2^k - x_1^k \right \}$.

Note that once we fix an arbitrary $K \subset \subset B_{64}(0)$, the bounds $N_{\phi, k}(0,64) \leq \overline{N}$ and $H_{\phi, k}(0,64) = 1$ give a uniform $H^1(K)$ bound.
We apply the compactness lemma \cite[Lemma 2.7]{AHL} to obtain convergence (of a subsequence) to a limit map $u$, which is an energy minimizer in $K \subset \subset B_{64}(0)$, 
and where the convergence is strong in $H^1 \left ( K \right )$ and uniform in $K$. 
Furthermore, using the classical version of \eqref{HeightDerivScale}, (cf. \cite[Equation (2.4)]{AHL}), we can also show that the convergence is strong in $L^2 \left ( B_{64}(0) \right )$, 
which together with $H_{\phi, k}(0,64) = 1$ and \eqref{HeightDerivScale} imply that the limit $u$ is nontrivial, $H_\phi (x, s ) > 0$, and $N_{\phi, k}(z,s)$ converge to the smoothed
frequency for the limit, $N_{\phi}(z,s)$, whenever $s < 64 - |z|$.

Thus, we obtain $x_1, x_2, y \in \overline{B_1(0)}$ such that $\mathrm{dist} \left ( x_1, x_2 \right ) \geq \rho$, $u(y) = 0$, and $u$ satisfies:
$$
N_{\phi} \left ( x_i , 2 \right ) = N_{\phi} \left ( x_i , \tilde{\rho} \right ), \quad i = 1,2.
$$
Hence, by \eqref{FreqDerivScale} and the Cauchy-Schwartz inequality, 
$u$ is homogeneous of degree $\alpha_i$ with respect to $x_i$ in $B_2 \left ( x_i \right ) \backslash B_{\tilde{\rho}} \left ( x_i \right )$ for $i=1,2$. 

We can consider the homogeneous extensions of degree $\alpha_1$, $\alpha_2$ to $\mathbf{R}^3$ with respect to $x_1$ and $x_2$ respectively, and by continuity and Lemma \ref{UniqueContinuation}, 
each must agree with $u$ on $B_{64}(0)$. Hence, we get an extension of $u$ as a locally minimizing map defined in $\mathbf{R}^3$. 
Note that $u \left ( x_1 \right ) = u \left ( x_2 \right )  = u (y) = 0$.
Moreover, one can show by elementary means that $\alpha_1 = \alpha_2$ must hold and that $u$ is independent of the direction $v = x_2 - x_1$. 
However, this contradicts the fact that $u^{-1} \{ 0 \}$ has Hausdorff dimension $1$ at most.

By an analogous argument we can show that by shrinking $\epsilon$ to be small enough with respect to $\overline{\rho}$, $\tilde{\rho}$, $\rho$ and $\delta$, 
we can ensure that \eqref{DropControl} holds as well. Suppose that the claim fails to hold for some $\delta$, $\rho$, $\tilde{\rho}$, $\overline{\rho}$ positive.
This time alongside sequences $u_k$, $\left \{ x_1^k, x_2^k \right \}$, $\epsilon_k$ as above, we have pairs
$\left \{ y_1^k, y_2^k \right \}$ and radii $r_i$, $r'_i$ for which \eqref{DropControl} fails. 
Passing to a limit by arguing as above above we get $y_1, y_2 \in \overline{B_1(0)} \cap V$, where $V = x_1 \, + \, \mathrm{span} \left \{ x_2 - x_1 \right \}$
and $r, r' \in \left [ \overline{\rho}, 1 \right ]$ such that:
\begin{equation}
\left | N_\phi \left ( y_1, r \right ) -  N_\phi \left ( y_2, r' \right ) \right | \geq \delta,
\label{Conclusion1}
\end{equation}
whereas:
\begin{equation}
N_{\phi} \left ( x_i , 2 \right ) = N_{\phi} \left ( x_i , \tilde{\rho} \right ), \quad i = 1,2.
\label{Conclusion2}
\end{equation}
However, \eqref{Conclusion2} allows us to consider homogeneous extensions of $u$ as above. 
Once again using the continuity of the limit map $u$ and Lemma \ref{UniqueContinuation}, we can show that the limit map $u$ is independent of the direction $v = x_2 - x_1$,
and homogeneous of degree $\alpha$. 
In particular, we arrive at the conclusion that for the limit map $u$, $N_\phi (x,r) = \alpha$, for every $r > 0$ and any $x \in V$ and, contradicting \eqref{Conclusion1}. 
\end{proof}

\section{Distortion bound} \label{GeometricSection}

As in \cite{NV} and \cite{DLMSV}, an important quantity in our analysis is the Jones $\beta_2$-number, 
which was originally defined in \cite{Jones} in the context of the analyst's traveling salesman problem in $\mathbf{R}^2$. 

\begin{definition}
For $\mu$ a nonnegative Radon measure in $\mathbf{R}^3$, $x \in \mathbf{R}^3$ and $r > 0$, we define the Jones $\beta_2$-number:
\begin{equation}
D_\mu(x,r) = \inf_{L} \frac{1}{r^3} \int_{B_r(x)} \mathrm{dist} (y,L)^2 \, \mathrm{d} \mu(y),
\label{JonesBeta}
\end{equation}
where the infimum is with respect to all lines in $\mathbf{R}^3$ and $\mathrm{dist} (y,L) = \inf_{z \in L} |z-y|$. 
\end{definition}

We need to characterize the Jones $\beta_2$-number and the optimal lines $L$ in \eqref{JonesBeta} in a linear algebraic fashion, which yields to estimates in terms of frequency pinching.
For $x_0 \in \mathbf{R}^3$ and $ r_0 > 0$ such that $\mu \left ( B_{r_0} \left ( x_0 \right ) \right ) > 0$, we define the barycenter of $\mu$ in $B_{r_0} \left ( x_0 \right )$:
$$
\overline{x}_{x_0,r_0} = \frac{1}{\mu \left ( B_{r_0} \left ( x_0 \right ) \right )} \int_{B_{r_0} \left ( x_0 \right )} x\, \mathrm{d} \mu(x).
$$
We also define the bilinear form $B \, : \mathbf{R}^3 \times \mathbf{R}^3 \to \mathbf{R}$ as:
$$
B(v,w) = \int_{B_{r_0} \left ( x_0 \right )} \left ( \left ( x - \overline{x}_{x_0,r_0} \right ) \cdot v \right ) \left ( \left ( x - \overline{x}_{x_0,r_0} \right ) \cdot w \right ) 
\, \mathrm{d}\mu(x), \quad v,w \in \mathbf{R}^3.
$$
Note that $B$ is symmetric and positive semi-definite. Hence, there exists an orthonormal basis $\left \{ v_1, v_2, v_3 \right \}$ of $\mathbf{R}^3$ such that
$ B \left ( v_i , v_j \right ) = \lambda_i \delta_{ij}$. In particular:
\begin{equation}
\int_{B_{r_0} \left ( x_0 \right )} \left ( \left ( x - \overline{x}_{x_0,r_0} \right ) \cdot v_i \right ) x \, \mathrm{d}\mu(x) = \lambda_i v_i, \quad i=1,2,3.
\label{Eigenvectors}
\end{equation}
Consequently, for $x_0 \in \mathbf{R}^3$ and $r_0 > 0$ such that $\mu \left ( B_{r_0} \left ( x_0 \right ) \right ) > 0$, we observe that:
\begin{equation}
D_{\mu} \left ( x_0, r_0 \right ) = \frac{\lambda_2 + \lambda_3}{r_0^3} \leq 2 \frac{\lambda_2}{r_0^3},
\label{Eigenvalues}
\end{equation}
where $\lambda_1 \geq \lambda_2 \geq \lambda_3$ are the eigenvalues corresponding to the eigenvectors in \eqref{Eigenvectors}, 
and the infimum in \eqref{JonesBeta} is achieved by the line $L = \overline{x}_{x_0,r_0} + \mathrm{span} \left \{ v_1 \right \}$.

The following distortion bound was proved in \cite[Proposition 5.3]{DLMSV}, and it will be an important ingredient in estimating the Minkowski content of $u^{-1} \{ 0 \}$ locally 
in Section \ref{MeasureSection}.

\begin{lemma} \label{BetaEstimate}
There exists a $C = C \left ( \overline{N} \right ) > 0$ such that if $\mu$ is a finite, nonnegative Radon measure with $\mathrm{spt}(\mu) \subset \mathcal{Z}$,
then for every $x_0 \in B_{1/8}(0)$ and $r \in (0,1]$:
\begin{equation}
D_\mu \left ( x_0 , r/8 \right ) \leq \frac{C}{r} \int_{B_{r/8} \left ( x_0 \right ) } W^{4r}_{r/8} (x) \, \mathrm{d} \mu(x).
\label{BetaBound}
\end{equation}
\end{lemma}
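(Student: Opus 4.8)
The strategy is to reduce \eqref{BetaBound} to the quantitative Weiss–type estimate \eqref{QuantWeissEstimate} via the linear-algebraic characterization \eqref{Eigenvalues}. By \eqref{Eigenvalues}, it suffices to bound $\lambda_2/(r/8)^3$, the second-largest eigenvalue of the bilinear form $B$ associated to $\mu$ restricted to $B_{r/8}(x_0)$; after rescaling we may assume $r=1$, so we must bound $\lambda_2$ by a constant multiple of $\int_{B_{1/8}(x_0)} W^4_{1/8}(x)\,\mathrm{d}\mu(x)$. The key observation is that for \emph{any} fixed unit vector $e$, the frequency pinching estimate \eqref{PinchingEstimate} of Lemma \ref{Pinching} controls the oscillation of $N_\phi(\cdot,1)$ along the direction $e$ on the support of $\mu$, and this oscillation is what obstructs $\mathrm{spt}(\mu)$ from concentrating near a line transverse to $e$.

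\emph{Main steps.} First, fix the eigenbasis $\{v_1,v_2,v_3\}$ and barycenter $\overline{x}=\overline{x}_{x_0,1/8}$ as above, so $\lambda_2 = B(v_2,v_2) = \int_{B_{1/8}(x_0)} \big((x-\overline{x})\cdot v_2\big)^2\,\mathrm{d}\mu(x)$. Second, test the Euler–Lagrange/variational structure: I would apply the function $x\mapsto N_\phi(x,1)$ (which by Lemma \ref{VarIdentities} is $C^1$ with $\partial_v N_\phi$ given explicitly, and whose oscillation is controlled by $W$ through \eqref{PinchingEstimate}) and exploit that, on the support of $\mu$, the map $u$ vanishes. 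Concretely, following \cite[Proposition 5.3]{DLMSV}, one writes $\lambda_2 v_2 = \int \big((x-\overline{x})\cdot v_2\big)(x-\overline{x})\,\mathrm{d}\mu(x)$ from \eqref{Eigenvectors}, pairs with $v_2$, and then inserts the identity
$$
\big((x-\overline{x})\cdot v_2\big)\,v_2 = \big(\text{gradient-type term}\big) + \big(\text{error controlled by }\mathcal{E}_i\text{ and }\Xi\big),
$$
mirroring the decomposition $\partial_v u = \mathcal{E}_1 - \mathcal{E}_2 + \Xi u$ used in the proof of Lemma \ref{Pinching}. Third, use Cauchy–Schwarz on the double integral $\int\!\int \big((x-\overline{x})\cdot v_2\big)\big((x'-\overline{x})\cdot v_2\big)\langle\cdots\rangle\,\mathrm{d}\mu(x)\mathrm{d}\mu(x')$ and bound the integrand by $\big|(x-x')\cdot\nabla u - N_\phi(x,|\cdot|)u\big|$-type quantities at the points $x,x'\in\mathrm{spt}(\mu)$; since $u(x)=u(x')=0$, these reduce to averaged versions of the left-hand side of \eqref{QuantWeissEstimate}. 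Fourth, invoke Lemma \ref{QuantWeiss} pointwise on $B_{1/8}(x)$ for each $x$ in the support, plus the local frequency and height bounds \eqref{LocalFreq}, \eqref{LocalHeight}, to replace everything by $W^4_{1/8}(x)$ and integrate in $\mu$.

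\emph{The main obstacle.} The delicate point is the bookkeeping in the decomposition: one must express the ``distance to the optimal line'' square $\lambda_2$ purely in terms of objects that vanish when $u$ is homogeneous with respect to points of $\mathrm{spt}(\mu)$, so that the Weiss defect $W$ genuinely controls it. This requires carefully choosing the test vector field (essentially $\phi(|y-x|)\,v_2$ paired against $\nabla u$, with the barycenter subtraction ensuring the mean-zero cancellation that kills the principal direction $v_1$) and then tracking which error terms are of order $\mathcal{E}_i$ (controlled by $W^4_{1/8}(x_i)$ via \eqref{QuantWeissEstimate}) versus order $\Xi$ (controlled by frequency oscillation, hence again by $W$ via \eqref{PinchingEstimate}). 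Getting the constant to depend only on $\overline{N}$ hinges on the uniform two-sided height bounds and uniform frequency bounds on the relevant annuli, exactly as in the proofs of Lemmas \ref{QuantWeiss} and \ref{Pinching}. Beyond this algebraic reorganization — which is identical in spirit to \cite[Proposition 5.3]{DLMSV} since it uses only the variational identities of Lemma \ref{VarIdentities} — the argument is routine.
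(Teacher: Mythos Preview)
Your sketch misses the central mechanism of the paper's proof and, as written, would not close. The key move is \emph{not} to bound $\lambda_2$ by testing the frequency oscillation or by decomposing $((x-\overline{x})\cdot v_2)\,v_2$ algebraically; rather, one dots the eigenvector identity \eqref{Eigenvectors} against $\nabla u(y)$ to obtain, for each $j$,
\[
\lambda_j\,|\partial_{v_j}u(y)|^2 \le \int_{B_{1/8}(x_0)} \bigl|(y-x)\cdot\nabla u(y) - \alpha\,u(y)\bigr|^2\,\mathrm{d}\mu(x),
\]
and then integrates in $y$ over an annulus $B_{5/4}(x_0)\setminus B_{3/4}(x_0)$. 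The right-hand side is then controlled by the Weiss estimate \eqref{QuantWeissEstimate} and the pinching estimate \eqref{PinchingEstimate} after choosing $\alpha$ to be the $\mu$-average of $N_\phi(\cdot,1)$. But to extract $D_\mu(x_0,1/8)\lesssim \lambda_2$ on the left you must \emph{divide} by the factor
\[
\int_{B_{5/4}(x_0)\setminus B_{3/4}(x_0)} \bigl(|\partial_{v_1}u|^2 + |\partial_{v_2}u|^2\bigr)\,\mathrm{d}z,
\]
and the whole point is that this quantity is bounded \emph{below} by a positive constant $c(\overline{N})$. The paper proves this lower bound by a compactness/contradiction argument: if it failed along a sequence, the limit minimizer would depend on a single variable in some ball, hence (by unique continuation, Lemma~\ref{UniqueContinuation}) everywhere in $B_{1/8}(0)$; but such a map is a minimizing geodesic in $\mathbf{D}_\kappa$, and no nontrivial minimizing geodesic passes through the vertex $0\in\mathbf{D}_\kappa$, contradicting $u^{-1}\{0\}\cap B_{1/8}(0)\neq\emptyset$.

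This lower bound is entirely absent from your plan, and without it there is no way to pass from ``$\lambda_2\cdot(\text{something})\le \int W\,\mathrm{d}\mu$'' to ``$\lambda_2\le C\int W\,\mathrm{d}\mu$''. Your proposed route through the $\mathcal{E}_1,\mathcal{E}_2,\Xi$ decomposition is a red herring here: that decomposition controls $\partial_v N_\phi$, which is the right object for Lemma~\ref{Pinching} but does not by itself bound the geometric spread of $\mathrm{spt}(\mu)$. Likewise, the condition $u(x)=u(x')=0$ on $\mathrm{spt}(\mu)$ is not what makes the Weiss-type integrand small; it enters only indirectly, through the compactness argument just described. In short, the missing idea is: test against $\nabla u$, integrate over an annulus, and use a compactness-based lower bound on the tangential energy to divide through.
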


\begin{proof}
We give the proof in \cite{DLMSV} for the convenience of the reader. 
A simple scaling argument shows that it suffices to prove \eqref{BetaBound} for $r=1$ and $H_\phi (0,1) = 1$.
Unless \eqref{BetaBound} holds trivially, we can assume $\mu \left ( B_{1/8}(0) \right ) > 0$, and consequently $u^{-1} \{0\} \cap B_{1/8}(0) \neq \emptyset$. 
Let $\overline{x}$ be the barycenter of $\mu$ in $B_{1/8} \left ( x_0 \right )$, and $ \{ v_1, v_2, v_3 \}$ an eigenbasis of $\mathbf{R}^3$, satisfying \eqref{Eigenvectors}
with $\lambda_1 \geq \lambda_2 \geq \lambda_3 \geq 0$. 
Then for every $j=1,2,3$, every $y \in B_{3/2} \left ( x_0 \right ) \backslash B_{1/2} \left ( x_0 \right )$ and any constant $\alpha$, we have:
\begin{equation}
- \lambda_j v_j \cdot \nabla u(y) = \int_{B_{1/8} \left ( x_0 \right )} 
\left ( \left ( x- \overline{x} \right ) \cdot v_j \right ) \left ( (y-x) \cdot \nabla u - \alpha u \right ) \, \mathrm{d} \mu(x).
\label{AlphaIdentity}
\end{equation}
Squaring both sides of \eqref{AlphaIdentity}, applying the Cauchy-Schwartz inequality to the right-hand side, using the identity $B \left ( v_j, v_j \right ) = \lambda_j$,
and finally dividing both sides by $\lambda_j$, we obtain for every $j=1,2,3$:
\begin{equation}
\lambda_j \left | \partial_{v_j} u(z) \right |^2 \leq \int_{B_{1/8} \left ( x_0 \right ) } \left | (z-x) \cdot \nabla u - \alpha u \right |^2 \, \mathrm{d} \mu(x).
\label{BetaEstimateStep1} 
\end{equation} 
Using \eqref{Eigenvalues}, $\lambda_1 \geq \lambda_2 \geq \lambda_3 \geq 0$, the integral of \eqref{BetaEstimateStep1} over $B_{5/4} \left ( x_0 \right ) \backslash B_{3/4} \left ( x_0 \right )$,
and Fubini's Theorem respectively, we get:
\begin{equation}
\begin{aligned}
D_\mu \left ( x_0, 1/8 \right ) \int_{B_{5/4} \left  (x_0 \right ) \backslash B_{3/4} \left ( x_0 \right )} 
\left [ \left | \partial_{v_1} u \right |^2 + \left | \partial_{v_2} u \right |^2  \right ] \, \mathrm{d}z \\
\leq 2 \int_{B_{5/4} \left  (x_0 \right ) \backslash B_{3/4} \left ( x_0 \right )} 
\lambda_2 \left [ \left | \partial_{v_1} u \right |^2 + \left | \partial_{v_2} u \right |^2  \right ] \, \mathrm{d}z \\
\leq 2 \int_{B_{5/4} \left  (x_0 \right ) \backslash B_{3/4} \left ( x_0 \right )} 
\left [ \lambda_1 \left | \partial_{v_1} u \right |^2 + \lambda_2 \left | \partial_{v_2} u \right |^2 + \lambda_3 \left | \partial_{v_3} u \right |^2  \right ] \, \mathrm{d}z \\
\leq 2 \int_{B_{5/4} \left  (x_0 \right ) \backslash B_{3/4} \left ( x_0 \right )}
\int_{B_{1/8} \left ( x_0 \right ) } \left | (z-x) \cdot \nabla u - \alpha u \right |^2 \, \mathrm{d} \mu(x) \, \mathrm{d}z \\
\leq 2 \int_{B_{1/8} \left ( x_0 \right ) } \int_{B_{3/2} \left  (x \right ) \backslash B_{1/2} \left ( x \right )}
 \left | (z-x) \cdot \nabla u - \alpha u \right |^2  \, \mathrm{d}z \, \mathrm{d} \mu(x).
 \label{BetaEstimateStep2}
\end{aligned}
\end{equation}

We can bound the left hand-side of \eqref{BetaEstimateStep2} from below by using the following lower bound:
\begin{equation}
\int_{B_{5/4} \left  (x_0 \right ) \backslash B_{3/4} \left ( x_0 \right )} 
\left [ \left | \partial_{v_1} u \right |^2 + \left | \partial_{v_2} u \right |^2  \right ] \, \mathrm{d}z \geq c \left ( \overline{N} \right ) > 0.
\label{CompactnessLowerBound}
\end{equation}
By $N_{\phi}(0,4) \leq \overline{N}$, \eqref{LogHeight} and the normalization $H_\phi (0,1) = 1$:
$$
\int_{B_1(0)} |\nabla u |^2 \, \mathrm{d}z \leq D_{\phi}(0,4) \leq \overline{N} H_{\phi} (0,4) \leq C \overline{N} H_\phi (0,1) \leq C \overline{N}.
$$
Suppose \eqref{CompactnessLowerBound} is false. Then we can find a sequence of maps with bounded Dirichlet energies, 
normalized $H_\phi (0,1)$, and $u_k^{-1} \{ 0 \} \cap B_{1/8}(0) \neq \emptyset$, as well as 
$x_0^k \in B_{1/8}(0)$, and orthonormal vectors $v_1^k$, $v_2^k$, for which the left-hand side of \eqref{CompactnessLowerBound} is arbitrarily small. 
By the compactness of energy-minimizing maps into $\mathbf{D}_\kappa$ (cf. \cite[Lemma 2.7]{AHL}), we obtain a nontrivial minimizing map $\hat{u}$ with 
$\hat{u}^{-1} \{ 0 \} \cap B_{1/8}(0) \neq \emptyset$,
$p \in \overline{B}_{1/8}(0)$, and orthonormal vectors $\hat{v}_1, \hat{v}_2$, for which the left-hand side of \eqref{CompactnessLowerBound} is $0$.
But then the minimizing map $\hat{u}$ agrees with a single-variable map in some ball $B_{\rho}(q) \subset B_2(0)$. 
This single-variable map defines a minimizing geodesic on $\mathbf{D}_\kappa$, which can be extended indefinitely,
and thus, the single-variable map agreeing with $\hat{u}$ in $B_{\rho}(q)$ extends to a single-variable map in $B_{1/8}(0)$. 
Consequently, by Lemma \ref{UniqueContinuation}, $\hat{u}$ is a single-variable map in $B_{1/8}(0)$. However, $\hat{u}^{-1} \{ 0 \} \cap B_{1/8}(0) \neq \emptyset$.
Since there are no nontrivial minimizing geodesics on $\mathbf{D}_\kappa$ which hit $0 \in \mathbf{D}_\kappa$ at an interior point, we obtain a contradiction. 

Hence, we have:
\begin{equation}
D_\mu \left ( x_0, 1/8 \right ) \leq C \left ( \overline{N} \right ) \int_{B_{1/8} \left ( x_0 \right ) } \int_{B_{3/2} \left  (x \right ) \backslash B_{1/2} \left ( x \right )}
 \left | (z-x) \cdot \nabla u - \alpha u \right |^2  \, \mathrm{d}z \, \mathrm{d} \mu(x).
\label{BetaEstimateStep3}
\end{equation}
The triangle inequality applied to \eqref{BetaEstimateStep3} gives
$D_\mu \left ( x_0, 1/8 \right ) \leq C ( I + II )$,
where:
$$
I = \int_{B_{1/8} \left ( x_0 \right ) } \int_{B_{3/2} \left  (x \right ) \backslash B_{1/2} \left ( x \right )}
\left | (z-x) \cdot \nabla u - N_\phi(x,1) u \right |^2  \, \mathrm{d}z \, \mathrm{d} \mu(x),
$$
$$
II = \int_{B_{1/8} \left ( x_0 \right ) } \int_{B_{3/2} \left  (x \right ) \backslash B_{1/2} \left ( x \right )}
\left ( N_\phi(x,1) - \alpha \right )^2 \left | u \right |^2  \, \mathrm{d}z \, \mathrm{d} \mu(x).
$$

Furthermore, using the triangle inequality and $2ab \leq a^2 + b^2$, we can write $I \leq 2 \left ( I_1 + I_2 \right )$, where:
$$
I_1 = \int_{B_{1/8} \left ( x_0 \right ) } \int_{B_{3/2} \left  (x \right ) \backslash B_{1/2} \left ( x \right )}
\left ( N_\phi (x,1) - N_\phi \left ( x, |z-x| \right ) \right )^2  | u |^2  \, \mathrm{d}z \, \mathrm{d} \mu(x),
$$
$$
I_2 = \int_{B_{1/8} \left ( x_0 \right ) } \int_{B_{3/2} \left  (x \right ) \backslash B_{1/2} \left ( x \right )}
\left | (z-x) \cdot \nabla u - N_\phi \left ( x , |z-x| \right ) u \right |^2  \, \mathrm{d}z \, \mathrm{d} \mu(x).
$$
As $z \in B_{3/2}(x) \backslash B_{1/2}(x)$, $ 1/8 \leq |z-x| \leq 4$. Therefore, by the monotonicity of $N_\phi$:
$$
\left | N_\phi (x,1) - N_\phi \left ( x, |z-x| \right ) \right | \leq N(x,4) - N_\phi(x,1/8) = W_{1/8}^4(x).
$$
Using this bound first, estimating the inner integral in $I_1$ later, and recalling the normalization $H_\phi(0,1)=1$, we get:
\begin{equation*}
I_1 \leq C H_\phi (0,1) \int_{B_{1/8} \left ( x_0 \right ) } \left ( W_{1/8}^4 (x) \right )^2 \, \mathrm{d}\mu(x) 
\leq C \left ( \overline{N} \right )  \int_{B_{1/8} \left ( x_0 \right ) } W_{1/8}^4 (x) \, \mathrm{d}\mu(x).
\end{equation*}
Secondly, the Weiss-type estimate \eqref{QuantWeissEstimate} applied to the inner integral in $I_2$ gives:
\begin{equation*}
I_2 \leq C  \int_{B_{1/8} \left ( x_0 \right ) } W_{1/8}^4 (x) \, \mathrm{d}\mu(x).
\end{equation*}

We can estimate the inner integral in $II$ by making use of $H_\phi(0,1) = 1$, \eqref{LogHeight} and \eqref{LocalHeight}. 
Next we choose $\alpha$ as:
\begin{equation*}
\alpha = \frac{1}{\mu \left ( B_{1/8} \left ( x_0 \right ) \right )} \int_{B_{1/8} \left ( x_0 \right )} N_{\phi} (y,1) \, \mathrm{d} \mu(y), 
\end{equation*}
and apply Jensen's inequality to get:
\begin{equation*}
II \leq \frac{C}{\mu \left ( B_{1/8} \left ( x_0 \right ) \right )} \int_{B_{1/8} \left ( x_0 \right )} \int_{B_{1/8} \left ( x_0 \right )}
\left ( N_\phi (x,1) - N_\phi (y,1) \right )^2 \, \mathrm{d}\mu(y) \, \mathrm{d} \mu(x).
\end{equation*}
Note that $x, y \in B_{1/8} \left ( x_0 \right )$ implies $|x-y| \leq 1/4$.
Hence, we can use the frequency pinching estimate \eqref{PinchingEstimate}, and evaluate the integrals to get:
\begin{equation*}
\begin{aligned}
II  & \leq \frac{C}{\mu \left ( B_{1/8} \left ( x_0 \right ) \right )} \int_{B_{1/8} \left ( x_0 \right )} \int_{B_{1/8} \left ( x_0 \right )}
\left ( W_{1/8}^4 (x) + W_{1/8}^4 (y) \right ) \, \mathrm{d}\mu(y) \, \mathrm{d} \mu(x) \\
    & = 2 C \left ( \overline{N} \right ) \int_{B_{1/8} \left ( x_0 \right )} W_{1/8}^4(x) \, \mathrm{d} \mu(x). 
\end{aligned}
\end{equation*}
The inequality \eqref{BetaEstimateStep3} and the estimates for $I_1$, $I_2$ and $II$ immediately give \eqref{BetaBound}.
\end{proof}

\section{Minkowski content estimate} \label{MeasureSection}

We would like to estimate the Minkowski content of $u^{-1} \{0\}$ in $B_r(x)$ under the assumption that 
the smoothed frequency $N_\phi(y,r)$ is close to $\frac{1}{2\sqrt{\kappa}}$ for every $y \in u^{-1} \{ 0 \} \cap B_r(x)$.
In our Minkowski content estimate, we will crucially rely on the following discrete Reifenberg theorem of Naber and Valtorta in \cite{NV},
which we state in a special case that is relevant to our application.

\begin{theorem}[Naber-Valtorta] \label{NaberValtorta}
Let $ \left \{ B_{s_j} \left ( x_j \right ) \right \}_{j \in J} \subset B_2(0) \subset \mathbf{R}^3$, 
be a collection of pairwise disjoint balls with $\left \{ x_j \right \}_{j \in J} \subset B_1(0)$,
and let:
$$
\mu = \sum_{j \in J} s_j \delta_{x_j}.
$$
There exist absolute positive constants $\delta_0$, $C_0$ such that if for all $B_r(x) \subseteq B_2(0)$ with $x \in B_1(0)$:
\begin{equation}
\frac{1}{r} \int_{B_r(x)} \left ( \int_0^r D_\mu (y,s) \frac{\mathrm{d}s}{s} \right ) \, \mathrm{d}\mu(y) < \delta_0^2,
\label{SmallDeviation}
\end{equation}
then:
\begin{equation}
\mu \left ( B_1(0) \right ) = \sum_{j \in J} s_j \leq C_0.  
\label{NVBound}
\end{equation}
\end{theorem}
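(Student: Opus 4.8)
The plan is to prove this by the Reifenberg‑type construction of Naber and Valtorta \cite{NV}, specialized here to $1$‑dimensional sets in $\mathbf{R}^3$. The conceptual content is that smallness of the integrated Jones numbers forces $\mathrm{spt}(\mu)$ to lie in a thin tube around a short curve, and then the disjointness of the balls $B_{s_j}(x_j)$ bounds $\sum_j s_j$ by the length of that curve. Thus the argument splits into a geometric construction step and a final packing step, with the hypothesis \eqref{SmallDeviation} entering as a Carleson‑type bound that makes the errors in the construction summable over scales.

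First I would build, at every dyadic scale $s = 2^{-i} \le 1$, an approximating curve. Using the linear‑algebraic description \eqref{Eigenvalues}, at each point $x$ and scale $s$ with $\mu(B_s(x)) > 0$ one has an optimal line $L_{x,s} = \overline{x}_{x,s} + \mathrm{span}\{v_1\}$, and via a partition of unity subordinate to a Vitali cover of $\mathrm{spt}(\mu)$ at scale $s$ one glues these lines into a single embedded $C^{1,1}$ curve $T_s$. The construction must yield: (i) $\mathrm{spt}(\mu) \cap B_1(0)$ lies in a tube of radius $\lesssim \delta_0 s$ about $T_s$; (ii) $T_s$ is a Lipschitz graph with small constant over the best line at the top scale; and, crucially, (iii) the squared displacement and tilt between $T_s$ and $T_{s/2}$ over a ball $B_t(z)$ are controlled by the local averaged Jones energy $t^{-1} \int_{B_t(z)} \int_0^t D_\mu(y,\sigma) \, \sigma^{-1} \, \mathrm{d}\sigma \, \mathrm{d}\mu(y)$ appearing in \eqref{SmallDeviation}. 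The point of (iii) — that errors between consecutive scales accumulate like a sum of squared Jones numbers (a Carleson quantity) rather than like a bare sum $\sum_s D_\mu^{1/2}$ of Jones numbers, which need not converge — is exactly the refinement of \cite{NV} over the classical Reifenberg theorem, and it is what makes \eqref{SmallDeviation} usable.

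Granting the construction, I would close the argument as follows. One runs it down to a "stopping scale" at each point: near $x_j$ one follows $\mathrm{spt}(\mu)$ only to scale $s_j$, below which the isolated atom $s_j \delta_{x_j}$ carries no further Jones information, obtaining a limiting curve $T$. By the area formula together with (ii) and (iii), $\mathcal{H}^1(T \cap B_2(0)) \le C \exp\bigl(C \sum_{\text{scales}} (\text{scale error})\bigr) \le C$, where the total scale error is bounded by \eqref{SmallDeviation} with an absolute constant once $\delta_0$ is fixed small. Finally, since each $x_j$ lies within $\lesssim \delta_0 s_j$ of $T$ and $T$ is nearly flat at that scale, the nearest‑point projections $\pi(x_j) \in T$ have the property that the arcs of $T$ of length $\sim s_j$ about $\pi(x_j)$ are pairwise quantitatively disjoint — here one uses the disjointness of the balls $B_{s_j}(x_j)$ — so that $\sum_j s_j \le C \, \mathcal{H}^1(T \cap B_2(0)) \le C_0$, which is \eqref{NVBound}.

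The main obstacle is item (iii) together with the construction of the $T_s$ itself: one must estimate in an $L^2(\mu)$‑averaged fashion the interpolation errors produced by the partition of unity, control the curvature of the glued curve, and verify that "stopping" at the scales $s_j$ does not destroy the global graph structure (a Reifenberg‑with‑holes phenomenon); this is an induction on scales in which the content bound is assumed at finer scales and improved. All of this is carried out in detail in \cite{NV}, whereas the reduction to it through \eqref{SmallDeviation} and the concluding packing estimate are comparatively routine; accordingly it suffices here to invoke \cite{NV}.
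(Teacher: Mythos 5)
The paper does not prove Theorem~\ref{NaberValtorta}; it is imported verbatim (in the $1$-dimensional, $\mathbf{R}^3$ special case) from Naber--Valtorta \cite{NV}, with the remark ``We refer to \cite{NV} for the statement in full generality and its proof.'' Your proposal ultimately does the same thing---sketches the Reifenberg-with-Carleson-excess construction and then defers to \cite{NV}---so it takes essentially the same approach; the sketch itself is a reasonable précis of the \cite{NV} argument, and the places where it glosses over real subtleties (the gluing of best lines when $\mu$ can be locally degenerate, the inductive up-the-scales content bound rather than a single limit curve, the Reifenberg-with-holes stopping) are correctly flagged as belonging to \cite{NV}.
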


We refer to for \cite{NV} for the statement in full generality and its proof. Also compare with the classical Reifenberg theorem, introduced in \cite{Reifenberg60},
which was a key ingredient of the analysis in \cite{AHL}.

\begin{remark} \label{FrequencyRemark}
By \cite[Lemma 4.2]{AHL}, $N \left ( x,r \right ) \geq \frac{1}{2 \sqrt{\kappa}}$ for every non-isolated $x \in u^{-1} \{ 0 \}$ and every $r > 0$.
This lower bound is due to the classification of homogeneous blow-ups of two variables, cf. \cite[Lemma 3.1]{AHL}.
(In fact, by \cite[Corollary 3.6]{AHL}, for every non-isolated $x \in u^{-1} \{ 0 \}$, $N \left ( x, 0^+ \right ) = \frac{1}{2 \sqrt{\kappa}}$, 
if and only if $u$ has a cylindrical tangent map at $x$. Moreover, by \cite[Corollaries 4.4 and 4.5]{AHL}, for any compact $K \subset \Omega$,
this condition is satisfied at all but finitely many $x \in u^{-1} \{0\} \cap K$.)
Also note that \eqref{FreqDerivScale} holds with equality if any only if $u$ is homogeneous on $B_r(x)$. Therefore, we have:
\begin{equation}
N_\phi (x, r) \geq \frac{1}{2 \sqrt{\kappa}},
\label{FrequencyLowerBound}
\end{equation}
for every non-isolated $x \in u^{-1} \{ 0 \}$ and $r > 0$.

We recall that by \cite[Corollary 5.3]{AHL} $u^{-1} \{ 0 \}$ consists locally of a finite union of H\"{o}lder continuous curves (with finitely many crossings) and isolated points.
Since the finite set of isolated points in $u^{-1} \{0\}$ has no effect on the validity of the Minkowski content estimate or the consequent rectifiability result we would like to prove,
we can assume without losing generality that $u^{-1} \{0\}$ consists locally of a finite union of H\"{o}lder continuous curves with finitely many crossings.
\end{remark}

The following lemma corresponds to Theorem 2.5, Proposition 7.2 and Proposition 7.3 in \cite{DLMSV} combined.
While we are following the strategy of \cite{DLMSV}, the proof is simpler, due to the strong assumption \eqref{SmallnessAssumption},
which we are able to verify in the applications of Lemma \ref{ConditionalEstimate} in the proof of Theorem \ref{LocalFiniteLength}. 

\begin{lemma} \label{ConditionalEstimate}
Let $u \, : \, \Omega \to \mathbf{D}_\kappa$ be an energy-minimizing map and $B_{2r}(x) \subset \Omega$. 
There exist $\delta = \delta \left ( \overline{N} \right )$ and an absolute constant $C$ such that:
\begin{equation}
\sup_{y \in u^{-1} \{ 0 \} \cap B_{r}(x)} N_\phi (y,r) < \frac{1}{2 \sqrt{\kappa}} + \delta
\label{SmallnessAssumption} 
\end{equation}
implies:
\begin{equation}
\left | B_\rho \left ( u^{-1} \{ 0 \} \cap B_r(x) \right ) \right | \leq C r \rho^2, \quad \forall \rho \in (0,r]. 
\label{ConditionalMinkowski} 
\end{equation}
\end{lemma}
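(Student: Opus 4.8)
The plan is to establish the Minkowski content bound \eqref{ConditionalMinkowski} by a standard covering argument built on the Naber--Valtorta discrete Reifenberg theorem (Theorem \ref{NaberValtorta}), using the distortion bound (Lemma \ref{BetaEstimate}) to verify its hypothesis \eqref{SmallDeviation}. After rescaling so that $x=0$, $r=1$ and $B_2(0)\subset\Omega$, the core object is, for each scale $\rho\in(0,1]$, a maximal family of pairwise disjoint balls $\{B_{\rho}(x_j)\}_{j\in J}$ with centers $x_j\in u^{-1}\{0\}\cap B_1(0)$; by maximality the doubled balls $\{B_{2\rho}(x_j)\}$ cover $u^{-1}\{0\}\cap B_1(0)$, so $|B_\rho(u^{-1}\{0\}\cap B_1(0))|\le \#J\cdot C\rho^3$, and it suffices to show $\#J\le C\rho^{-1}$, i.e.\ $\mu(B_1(0))\le C$ for the rescaled packing measure $\mu=\sum_{j}(2\rho)\,\delta_{x_j}$ after dividing through by $\rho$ — more precisely one works with $\mu=\sum_j s_j\delta_{x_j}$, $s_j=\rho$, and aims for $\mu(B_1(0))=\rho\cdot\#J\le C_0$.

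The key step is verifying the smallness condition \eqref{SmallDeviation} for this $\mu$. For any $B_t(z)\subseteq B_2(0)$ with $z\in B_1(0)$ and any scale $s\le t$, Lemma \ref{BetaEstimate} (rescaled) gives $D_\mu(y,s)\lesssim \frac{1}{s}\int_{B_s(y)}W^{cs}_{s/8}(w)\,d\mu(w)$ whenever $\mu(B_s(y))>0$; inserting this into $\int_0^t D_\mu(y,s)\frac{ds}{s}$ and applying Fubini, one bounds the relevant triple integral by $\frac{C}{t}\int_{B_t(z)} W^{C}_{c}(w)\,d\mu(w)$ up to logarithmic factors in the scales, since the nested frequency drops $W^{cs}_{s/8}$ telescope. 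The assumption \eqref{SmallnessAssumption} together with the lower bound $N_\phi(w,\rho)\ge \frac{1}{2\sqrt\kappa}$ from Remark \ref{FrequencyRemark} forces $W^{4}_{c}(w)=N_\phi(w,4)-N_\phi(w,c)$ to be small — bounded by (a constant times) $\delta$ plus whatever the frequency can grow between the relevant dyadic scales, which is again controlled by \eqref{SmallnessAssumption} and the monotonicity of $N_\phi$. Choosing $\delta=\delta(\overline N)$ small enough makes the total $<\delta_0^2$. Then Theorem \ref{NaberValtorta} yields $\mu(B_1(0))\le C_0$, hence $\#J\le C_0\rho^{-1}$ and \eqref{ConditionalMinkowski} follows after undoing the rescaling.

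The main obstacle I expect is bookkeeping the passage from the pointwise frequency bound \eqref{SmallnessAssumption}, which is stated at the single scale $r$ and only for points in $B_r(x)$, to the uniform control of all the pinching quantities $W^{Cs}_{cs}(w)$ appearing at every intermediate scale $s\in(0,r]$ and every center $w\in u^{-1}\{0\}$ that enters the covering. One must use the local frequency comparison \eqref{LocalFreq}, the monotonicity of $N_\phi$, and the lower bound \eqref{FrequencyLowerBound} to squeeze $N_\phi(w,\cdot)$ between $\frac{1}{2\sqrt\kappa}$ and $\frac{1}{2\sqrt\kappa}+C\delta$ on the whole relevant range of scales, so that each $W^{Cs}_{cs}(w)\le C\delta$; only then does the Fubini/telescoping estimate close with a constant that can be absorbed below $\delta_0^2$. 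A secondary point to handle carefully is the requirement in Theorem \ref{NaberValtorta} that \eqref{SmallDeviation} hold for \emph{all} sub-balls $B_t(z)\subseteq B_2(0)$, not just $B_1(0)$ itself, which is why the estimate must be carried out at an arbitrary scale and localized correctly; the scale-invariance of $D_\mu$ and of the frequency pinching makes this routine but it must be stated.
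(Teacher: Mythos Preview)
Your overall architecture is right—Vitali packing, distortion bound, telescoping of the frequency pinching, then Theorem \ref{NaberValtorta}—but there is a genuine circularity you have not addressed. After inserting \eqref{BetaBound} into \eqref{SmallDeviation} and applying Fubini, the quantity you must control is
\[
\frac{1}{t}\int_0^t \frac{1}{s^2}\int_{B_{2t}(z)} W^{32s}_{s}(\xi)\,\mu\bigl(B_s(\xi)\bigr)\,\mathrm{d}\mu(\xi)\,\mathrm{d}s,
\]
and to reduce the factor $s^{-2}\mu(B_s(\xi))$ to $s^{-1}$ (so that the telescoping argument even has a chance) you need the Ahlfors upper bound $\mu(B_s(\xi))\le C s$ for \emph{every} $s\in(\rho,t]$. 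But this is precisely the packing estimate you are trying to prove; disjointness of your Vitali balls only gives it at the bottom scale $s=\rho$, and the local frequency comparison \eqref{LocalFreq} says nothing about measures. Your claim that the Fubini step yields $\frac{C}{t}\int_{B_t(z)} W(w)\,\mathrm{d}\mu(w)$ ``up to logarithmic factors'' tacitly assumes this missing bound, and the same hidden assumption reappears when you estimate the outer integral $\mu(B_{2t}(z))$.

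The paper breaks this circularity by an induction on dyadic scales $2^j\sigma$, $j=0,1,\dots,A$: one assumes $\mu(B_{2^j\sigma}(\cdot))\le C_0\,2^j\sigma$, derives from it the \emph{coarse} bound $\mu(B_{2^{j+1}\sigma}(\cdot))\le \hat C C_0\,2^{j}\sigma$ by a trivial covering, feeds this coarse bound into the Fubini estimate above (restricted to $\overline{\mu}=\mu\llcorner B_{2^{j+1}\sigma}(z)$), and only then applies Theorem \ref{NaberValtorta} to recover the sharp constant $C_0$ at scale $2^{j+1}\sigma$. Without this bootstrap, the hypothesis \eqref{SmallDeviation} cannot be verified and your direct application of the discrete Reifenberg theorem does not go through. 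A minor additional point: the assumption \eqref{SmallnessAssumption} controls $N_\phi(\cdot,r)$, not $N_\phi(\cdot,4r)$, so your telescoping should terminate at scale $r$ (as in the paper's computation) rather than at $4$; the comparison \eqref{LocalFreq} would only give a bound in terms of $\overline{N}$, not $\tfrac{1}{2\sqrt\kappa}+C\delta$.
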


\begin{proof}
Our goal is to construct for any given $\rho  \in (0,r]$, a finite cover $\left \{ B_{\rho} \left ( x_i \right )  \right \}$ of $u^{-1} \{ 0 \} \cap B_r(x)$ such that
$M \rho \leq C r/8$, where $M$ is the number of balls in the cover and $C$ is an absolute constant. Once we construct such a cover, we obtain:
\begin{equation}
\left | B_\rho \left ( u^{-1} \{ 0 \} \cap B_r(x) \right ) \right | \leq \left | \cup_i B_{2 \rho} \left ( x_i \right ) \right | \leq M \left ( 2 \rho \right )^3 \leq C r \rho^2.
\label{FinalStroke}
\end{equation}
which is \eqref{ConditionalMinkowski}. Hence, we fix an arbitrary $\rho \in (0,r)$, as the case $\rho = r$ holds trivially.
Firstly, we will build a finite cover $\left \{ B_{\rho} \left ( x_i \right )  \right \}$ by induction, then we will verify $M \rho \leq C r/8$ by another induction argument.
For brevity, we introduce the notation $\mathcal{Z}_r(x) = u^{-1} \{ 0 \} \cap B_r(x)$.

{\bf{Construction:}} We begin with the cover $\mathcal{F}_0 = \left \{ B_r(x) \right \}$. 
Since by Remark \ref{FrequencyRemark} we assume that $\mathcal{Z}_r(x)$ has no isolated points, 
either $\mathcal{Z}_r(x) \subset B_{r/10} (y)$ for some $y \in \mathcal{Z}_r(x)$, 
or there are $y_1, y_2 \in \mathcal{Z}_r(x)$ such that $\mathrm{dist} \left ( y_1, y_2 \right ) \geq r/10$.
If the first case holds, then we proceed our construction with $\left \{ B_{r/10}(y) \right \}$ instead of $\left \{ B_{r}(x) \right \}$.
Otherwise, we let $V$ be the line such that $y_1, y_2 \in V$. 
For $\delta = \delta \left ( \overline{N} \right )$ small enough, $\mathcal{Z}_r(x)$ is contained in $B_{r/20}(V)$ by \eqref{HalfReifenberg} in Lemma \ref{FrequencyControl}.
Cover $\mathcal{Z}_r(x) \cap B_{r/20}(V)$ with balls of radius $r/10$ that satisfy the following properties:
\begin{enumerate}[(i)]
 \item the centers $x_i \in \mathcal{Z}_r(x) \cap V$,
 \item the concentric balls of radius $r/50$ are pairwise disjoint. 
\end{enumerate}
We denote such a cover as $\left \{ B_{r/10} \left ( x_i \right ) \right \}_{i \in I_1}$.
Hence, $\mathcal{F}_1$ is either $\left \{ B_{r/10}(y) \right \}$ or $\left \{ B_{r/10} \left ( x_i \right ) \right \}_{i \in I_1}$.

We apply this argument inductively to each ball in $\mathcal{F}_k$, $k \geq 1$, while maintaining the Vitali property.
In particular, for each $B_{r/10^k}(z) \in \mathcal{F}_k$: 
\begin{enumerate}[(a)]
 \item either $\mathcal{Z}_r(x) \cap B_{r/10^k}(z) \subset B_{r/10^{k+1}} \left ( \tilde{z} \right )$ for some $\tilde{z} \in \mathcal{Z}_r(x) \cap B_{r/10^k}(z)$, \label{CaseA}
 \item or there are $z_1, z_2 \in \mathcal{Z}_r(x) \cap B_{r/10^k}(z)$ such that $\mathrm{dist} \left  ( z_1, z_2 \right ) \geq r/10^{k+1}$. \label{CaseB}
\end{enumerate}
If \eqref{CaseA} holds, we include $B_{r/10^{k+1}} \left ( \tilde{z} \right ) $ in $\mathcal{F}_{k+1}$. 
For each $B_{r/10^k}(z) \in \mathcal{F}_k$ satisfying \eqref{CaseB}, we denote by $V_z$ the line containing $z_1$ and $z_2$.
As in the first step, for $\delta = \delta \left ( \overline{N} \right )$ small enough, $\mathcal{Z}_r(x) \cap B_{r/10^k}(z) \subset B_{r/ \left ( 2 \cdot 10^{k+1} \right )}  \left (V_z \right )$.
We cover the set:
$$
\mathcal{Z}_r(x) \cap \bigcup_z \left ( B_{r / 2 \cdot 10^{k+1}} \left ( V_z \right ) \cap B_{r/10^k}(z) \right ),
$$
where the union is over those centers $z$ that satisfy \eqref{CaseB} above,
with balls of radius $r/10^{k+1}$ with the following properties:
\begin{enumerate}[(i)]
 \item the centers $x_j$ lie on $\mathcal{Z}_r(x) \cap \left ( \cup_z B_{r/10^k}(z) \cap V_z \right )$,
 \item the concentric balls of radius $r/\left ( 5 \cdot 10^{k+1} \right )$ are pairwise disjoint.
\end{enumerate}
From these balls, we add to $\mathcal{F}_{k+1}$ those which do not intersect
$B_{r/ \left ( 5 \cdot 10^k \right )}(y)$, where $B_{r/ 10^k}(y) \in \mathcal{F}_k$ such that $\mathcal{Z}_r(x) \cap B_{r/ 10^k}(y) \subset B_{r/10^{k+1}} \left ( \tilde{y} \right )$
for some $\tilde{y} \in \mathcal{Z}_r(x) \cap B_{r/ 10^k}(y)$. We claim that despite the omission of balls $B_{r/10^{k+1}}(z)$ failing to satisfy this condition, 
$\mathcal{F}_{k+1}$ is still a cover of $\mathcal{Z}_r(x)$. Indeed, in this case, we observe that $B_{r/10^{k+1}}(z) \subset  B_{r/10^k}(y)$, and 
$ \mathcal{Z}_r(x) \cap B_{r/10^k}(y)  \subset B_{r/10^{k+1}} \left ( \tilde{y} \right )$. Since $B_{r/10^{k+1}} \left ( \tilde{y} \right ) \in \mathcal{F}_{k+1}$, the claim follows.

We enumerate these finite covers as $\mathcal{F}_k = \left \{ B_{r/10^{k}} \left ( x_i \right ) \right \}_{i \in I_k}$, for any integer $k \geq 1$.
In particular, we obtain $\mathcal{F}_\tau = \left \{ B_{r/10^{\tau}} \left ( x_i \right ) \right \}_{i \in I_\tau}$ 
for $\tau = \left \lceil{\log_{10} \left ( r/\rho \right )}\right \rceil $,
which is finer cover than $ \left \{ B_\rho \left ( x_i \right ) \right \}_{i \in I_\tau}$. We remark that we have already imposed a smallness assumption on $\delta$ 
(depending on $\overline{N}$ only) in our construction. Our next task is to show that by shrinking $\delta$ further, if necessary, we can ensure that:
\begin{equation}
M \frac{r}{10^\tau} = \sum_{i \in I_\tau} \frac{r}{10^\tau} \leq \frac{C}{80} r,
\label{CardinalityBound}
\end{equation}
where $M$ is the cardinality of the index set $I_\tau$, and $C$ is the absolute constant in \eqref{ConditionalMinkowski}. 
Since $\rho/10 \leq r/10^\tau$, \eqref{CardinalityBound} immediately implies the last inequality in \eqref{FinalStroke}, and consequently \eqref{ConditionalMinkowski} as well.

{\bf{Cardinality bound:}} We let $\sigma = r/ \left ( 5 \cdot 10^\tau \right)$ and introduce the measures: $\mu = \sigma \sum_{i \in I_\tau} \delta_{x_i}$. 
Hence, \eqref{CardinalityBound} would follow from:
\begin{equation}
\mu \left ( B_r(x) \right ) \leq \tilde{C}_0 r, 
\label{MeasureBound} 
\end{equation}
once we choose $C = 400 \tilde{C}_0$. Here $\tilde{C}_0$ is to be determined.

If $\sigma \geq r/128$, then the number of balls of radius $\sigma$ needed to cover $B_r(x)$ is bounded by an absolute constant. 
In this case \eqref{MeasureBound} follows from the fact that $\left \{ B_\sigma \left ( x_i \right ) \right \}_{i \in I_\tau}$ are pairwise disjoint.
Otherwise, we set $A$ to be the largest positive integer satisfying $2^A \sigma < r/64$. 
We will prove by induction that there exists an absolute constant $C_0$ such that:
\begin{equation}
\mu \left ( B_{2^j \sigma}(z) \right ) \leq C_{0} 2^j \sigma,
\label{InductionBound}
\end{equation}
for every $z \in B_r(x)$ and $j = 0,1,2, ..., A$.
We note that by the choice of $A$, $2^A \sigma \geq r/128$. 
In particular, the number of balls of radius $2^A \sigma$ needed to cover $B_r(x)$ is bounded by an absolute constant. 
Hence, \eqref{InductionBound} with $j = A$ would imply \eqref{MeasureBound}.

Firstly, we note that by construction $\left \{ B_\sigma \left ( x_i \right ) \right \}_{i \in I_\tau}$ is a collection of pairwise disjoint balls in $B_{2r}(x)$ 
with $x_i \in B_r(x)$ for every $i \in I_\tau$. As a result, $\mu \left ( B_{\sigma}(z) \right ) \leq 8 \sigma$, and as long as $C_{0} \geq 8$,
\eqref{InductionBound} holds in the base case $j = 0$.

We assume that \eqref{InductionBound} holds for every $z \in B_r(x)$ for some $j < A$, and set $\tilde{\sigma} = 2^j \sigma$. 
There remains to show \eqref{InductionBound} for every $z \in B_r(x)$ for $j+1$. That is:
\begin{equation}
\mu \left ( B_{2 \tilde{\sigma}}(z) \right ) \leq C_{0} \left ( 2 \tilde{\sigma} \right ), \quad \forall z \in B_r(x).
\label{FinalEstimate}
\end{equation}

We observe that since the number of balls of radius $\tilde{\sigma}$ required to cover $B_{2 \tilde{\sigma}}(z)$ is bounded by an absolute constant $\hat{C}$,
covering and the induction hypothesis together imply the coarse bound:
\begin{equation}
\mu \left ( B_{2 \tilde{\sigma}}(z) \right ) \leq \frac{\hat{C}}{2} C_{0} \left ( 2 \tilde{\sigma} \right ).
\label{CoarseBound}
\end{equation}
However, in order to obtain \eqref{FinalEstimate}, we need to invoke Theorem \ref{NaberValtorta}. 
We fix an arbitrary $z \in B_r(x)$ and set $\overline{\mu} = \mu \llcorner B_{2 \tilde{\sigma}}(z)$. 
Once we verify the hypothesis that for every $y \in B_{2 \tilde{\sigma}}(z)$ and every $t \in \left ( 0, 2 \tilde{\sigma} \right )$:
\begin{equation}
I(t) = \int_{B_t(y)} \left ( \int_0^t D_{\overline{\mu}} (\zeta,s) \, \frac{\mathrm{d}s}{s} \right ) \, \mathrm{d} \overline{\mu}(\zeta) < \delta_0^2 t,
\label{MainHypothesis}
\end{equation}
after an appropriate scaling, Theorem \ref{NaberValtorta} gives $\overline{\mu} \left ( B_{2 \tilde{\sigma}}(z) \right ) \leq C_{0} \left ( 2 \tilde{\sigma} \right )$,
for $C_0$ as in \eqref{NVBound}, which is \eqref{FinalEstimate}.

Finally, we will combine \eqref{BetaBound}, \eqref{CoarseBound} and Fubini's theorem to verify \eqref{MainHypothesis} for any fixed $t \leq 2 \tilde{\sigma}$. 
Setting:
\begin{equation*}
\overline{W}_s \left ( \zeta \right ) =  
\begin{cases}
N_\phi \left ( \zeta, 32 s \right ) - N_\phi \left ( \zeta , s \right ), & \mathrm{if} \; s > \sigma, \\
0, \quad & \mathrm{otherwise},
\end{cases}
\end{equation*}
we note that for all $i \in I_\tau$ and all $s \in (0,1)$:
\begin{equation}
D_{\overline{\mu}} \left ( x_i , s \right ) \leq C \left ( \overline{N} \right ) \frac{1}{s} \int_{B_s \left ( x_i \right ) } \overline{W}_s( \xi ) \mathrm{d} \overline{\mu} (\xi),
\label{PointwiseBound}
\end{equation}
since this is exactly \eqref{BetaBound} for $s > \sigma$, and otherwise $ \mathrm{spt} ( \mu ) \cap B_s \left ( x_i \right ) = \left \{ x_i \right \}$, which implies that 
both sides of \eqref{PointwiseBound} are $0$. 

Using \eqref{PointwiseBound} and Fubini's Theorem, we get:
\begin{equation*}
\begin{aligned}
I(t) & \leq C \int_0^t \frac{1}{s^2} \int_{B_t(y)} \int_{B_s(\zeta)} \overline{W}_s ( \xi ) \, \mathrm{d} \overline{\mu} (\xi) \, \mathrm{d}s \, \mathrm{d} \overline{\mu} ( \zeta) \\
     & = C \int_0^t \frac{1}{s^2} \int_{B_t(y)} \int_{B_s(\zeta)} \overline{W}_s ( \xi ) \, \mathrm{d} \overline{\mu} ( \xi ) \, \mathrm{d} \overline{\mu} ( \zeta ) \, \mathrm{d}s.
\end{aligned}
\end{equation*}
By the fact that $\overline{\mu}$ is supported on $B_{2 \tilde{\sigma}}(z)$ and Fubini's Theorem once again:
\begin{equation*}
I(t) \leq C \int_0^t \frac{1}{s^2} 
\int_{B_{t+s}(y) \cap B_{2 \tilde{\sigma}(z)}} \overline{W}_s ( \xi ) \int_{B_s(\xi) \cap B_{2 \tilde{\sigma}(z)}} \, \mathrm{d} \mu(\zeta) \, \mathrm{d}\mu(\xi) \, \mathrm{d}s.
\end{equation*}
Using the coarse bound \eqref{CoarseBound} to estimate the inner most integral, and applying Fubini's theorem one last time, we get:
\begin{equation}
\begin{aligned}
I(t) & \leq C \int_0^t \int_{B_{t+s}(y) \cap B_{2 \tilde{\sigma}(z)}} \overline{W}_s ( \xi ) \, \mathrm{d} \mu \, \frac{\mathrm{d}s}{s}
     & \leq C \int_{B_{2t}(y)} \int_0^t \overline{W}_s ( \xi ) \, \frac{\mathrm{d}s}{s} \mathrm{d} \overline{\mu}. 
 \label{NVHStep1}
\end{aligned}
\end{equation}

Next we estimate the inner integral for an arbitrary $\xi = x_i \in B_{2 \tilde{\sigma}}(z) \cap \mathcal{Z}_r(x)$ that we fix. 
Let $\tilde{A}$ be the smallest integer that $2^{\tilde{A}} \sigma > t$. Then since $j+1 \leq A$:
\begin{equation*}
2^{\tilde{A}-1} \leq t \leq 2 \tilde{\sigma} = 2^{j+1} \sigma \leq 2^A \sigma < \frac{r}{64},
\end{equation*}
and therefore:
\begin{equation}
2^{\tilde{A}} \sigma > t, \quad \mathrm{and} \; 2^{\tilde{A}+5} < r.
\label{ThingsFit}
\end{equation}
Using the monotonicity of \eqref{FreqDerivScale}, \eqref{ThingsFit} and \eqref{SmallnessAssumption} we estimate:
\begin{equation}
\begin{aligned}
\int_0^t \overline{W}_s ( \xi ) \, \frac{\mathrm{d}s}{s} 
& = \int_\sigma^t \left [ N_\phi \left ( x_i, 32s \right  ) - N_\phi \left ( x_i, s \right  ) \right ] \, \frac{\mathrm{d}s}{s} \\
& \leq \sum_{l = 0}^{\tilde{A}-1} \int_{2^l \sigma}^{2^{l+1} \sigma}  \left [ N_\phi \left ( x_i, 32 s \right  ) - N_\phi \left ( x_i, s \right  ) \right ] \, \frac{\mathrm{d}s}{s} \\
& \leq \sum_{l = 0}^{\tilde{A}-1} \left [ N_\phi \left ( x_i, 2^5 \cdot 2^{l+1} \sigma \right  ) - N_\phi \left ( x_i, 2^l \sigma \right  ) \right ] 
\int_{2^l \sigma}^{2^{l+1} \sigma} \, \frac{\mathrm{d}s}{s} \\
& = \log 2 \sum_{l=0}^{\tilde{A}-1} \left [ N_\phi \left ( x_i, 2^{l+6} \sigma \right  ) - N_\phi \left ( x_i, 2^l \sigma \right  ) \right ] \\
& = \log 2 \sum_{m=0}^5 \sum_{l=0}^{\tilde{A}-1} \left [ N_\phi \left ( x_i, 2^{l+m+1} \sigma \right  ) - N_\phi \left ( x_i, 2^{l+m} \sigma \right  ) \right ] \\
& =  \log 2 \sum_{m=0}^5 \left [ N_\phi \left ( x_i, 2^{\tilde{A}+m} \sigma \right  ) - N_\phi \left ( x_i, 2^{m} \sigma \right  ) \right ] \\
& \leq 6 \log 2 \left [ N_\phi \left ( x_i, r \right  ) - N_\phi \left ( x_i, \sigma \right  ) \right ] \\
& \leq 6 \log 2 \left [ \frac{1}{2 \sqrt{\kappa}} + \delta - \frac{1}{2\sqrt{\kappa}} \right ] \leq \left  ( \log 64 \right ) \delta. 
\end{aligned}
\label{PenultimateBound}
\end{equation}

Finally, covering $B_{2t}(y) \subset B_{20 \tilde{\sigma}}(z)$ by balls of radius $\tilde{\sigma}$, the number of which can be bounded by an absolute constant,
and using the coarse bound \eqref{CoarseBound}, we can estimate: 
\begin{equation}
\overline{\mu} \left ( B_{2t}(y) \right ) \leq \mu \left ( B_{2t}(y) \right ) \leq \hat{C} t.
\label{CoarseApplied}
\end{equation}

Combining \eqref{NVHStep1}, \eqref{PenultimateBound} and \eqref{CoarseApplied}, we obtain:
\begin{equation}
\int_{B_t(y)} \left ( \int_0^t D_{\overline{\mu}} (\zeta,s) \, \frac{\mathrm{d}s}{s} \right ) \, \mathrm{d} \overline{\mu}(\zeta) < C \left ( \overline{N} \right ) \delta t. 
\label{UltimateBound}
\end{equation}
Hence, shrinking $\delta = \delta \left ( \overline{N} \right )$ further, if necessary, to ensure $\delta \leq \delta_0^2 / C \left ( \overline{N} \right )$, we obtain \eqref{MainHypothesis}.
Thus, the proof of \eqref{FinalEstimate} is complete. 
\end{proof}

\section{Topological Structure} \label{TopologicalSection}

In this section we restate several important lemmas from \cite{AHL} for the reader's convenience.
We will heavily rely on these lemmas in the proof of Theorem \ref{LocalFiniteLength} in Section \ref{FinalSection}.
Firstly, we recall the characterization of the defect set of homogeneous maps $v: \mathbf{R}^3 \to \mathbf{D}_k$ minimizing the Dirichlet energy in any bounded subset of $\mathbf{R}^3$.

\begin{lemma} \label{3DHomogeneous}
There exist positive constants $N_0$, $d_0$, $C$ so that for any non-constant homogeneous map $v: \mathbf{R}^3 \to \mathbf{D}_k$ minimizing the Dirichlet energy,
$v^{-1}\{ 0 \} \cap \mathbf{S}^2$ consists of $2m$ points separated by distances at least $d_0$, where $2m \leq K_0$,
where both $d_0$ and $K_0$ depend on $N \left ( 0; 0^+ \right )$, namely the vanishing order of $v$ at $0$. 
Near each $a \in v^{-1}\{ 0 \} \cap \mathbf{S}^2$, the following asymptotic estimate holds:
\begin{equation}
\left | v(x) - w_a \circ p_a (x-a) \right | \leq C \left ( |x-a|^{1/2 \sqrt{k}} \right ),
\label{asymptoticEstimate} 
\end{equation}
for some two-dimensional minimizer $w_a$ and orthogonal projection $p_a \, : \, \mathbf{R}^3 \to \mathbf{R}^2$ with $p_a(a) = 0$. 
Furthermore, for $\epsilon > 0$,
there exist $\beta = \beta \left ( \epsilon, N \left (0; 0^+ \right ) \right )> 0$ and $\gamma = \gamma \left ( \epsilon, N \left ( 0; 0^+ \right ) \right ) > 0$ such that:
\begin{equation}
N_\phi \left ( b ; r \right ) \leq \frac{1}{2 \sqrt{k} } + \epsilon,
\label{frequencyperturbation}
\end{equation}
whenever $ \left | a - \frac{b}{|b|} \right | < \beta$ and
$r \in (0, |b| \gamma ]$ for some $a \in v^{-1} \{0\} \cap \mathbf{S}^2$.  
\end{lemma}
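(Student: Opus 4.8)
The plan is to assemble this statement from the analysis of homogeneous minimizers in \cite{AHL}: the structural claims and the asymptotic estimate \eqref{asymptoticEstimate} are a repackaging of \cite[Lemma 3.1]{AHL} and the three-dimensional classification carried out there, while the frequency bound \eqref{frequencyperturbation}, proved in \cite{AHL} for the classical Almgren frequency, transfers verbatim to the smoothed frequency $N_\phi$. First I would record the structure of the zero set. Since $v$ is homogeneous of degree $\alpha := N(0;0^+)$, the set $v^{-1}\{0\}$ is a cone over $Z := v^{-1}\{0\}\cap\mathbf{S}^2$; the dimension reduction principle (cf. \cite[Lemma 2.11]{AHL}) gives $\dim_{\mathcal H} v^{-1}\{0\}\le 1$, so $Z$ is $0$-dimensional, and the compactness of energy-minimizing maps into $\mathbf{D}_\kappa$ (cf. \cite[Lemma 2.7]{AHL}) then forces $Z$ to be finite. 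That $\#Z$ is even, that $\#Z\le K_0(\alpha)$, and that the points of $Z$ are $d_0(\alpha)$-separated I would take directly from the classification of homogeneous minimizers in $\mathbf{R}^3$ in \cite{AHL}, which combines the two-variable classification with $\epsilon$-regularity and the compactness of minimizers into $\mathbf{D}_\kappa$; the dependence of $K_0$ and $d_0$ on $\alpha$ only is a consequence of that compactness.

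Next I would fix $a\in Z$ and identify the tangent map there. By the separation just quoted, $v^{-1}\{0\}$ coincides near the ray $\mathbf{R}_{>0}a$ with that single ray, so any tangent map of $v$ at $a$ is a nonconstant homogeneous minimizer whose zero set is a line through the origin and which, by Federer's dimension reduction, is invariant along that line; hence it is a two-variable minimizer $w_a\circ p_a$, with $p_a$ an orthogonal projection onto a $2$-plane and $p_a(a)=0$. By the classification \cite[Lemma 3.1]{AHL}, every such $w_a$ has vanishing order exactly $\tfrac{1}{2\sqrt{\kappa}}$; in particular the tangent map at $a$ is unique up to the choice of $p_a$, $w_a^{-1}\{0\}=\{0\}$, and the vanishing order of $v$ at $a$ is $\tfrac{1}{2\sqrt{\kappa}}$. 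I would then obtain \eqref{asymptoticEstimate} from the convergence — uniform over the relevant compact family of homogeneous minimizers, by \cite[Lemma 2.7]{AHL} — of the rescalings $\rho^{-1/2\sqrt{\kappa}}\,v(a+\rho\,\cdot\,)$ to $w_a\circ p_a$, together with the fact that $v$ and $w_a\circ p_a$ have matching homogeneity order $\tfrac{1}{2\sqrt{\kappa}}$ at $a$, which accounts for the exponent.

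For \eqref{frequencyperturbation} the point is that the argument in \cite{AHL} uses only two properties of the frequency: its monotonicity in $r$ — available here by \eqref{FreqDerivScale} — and the fact that at a zero of a homogeneous minimizer the frequency tends to $\tfrac{1}{2\sqrt{\kappa}}$ as $r\to 0^+$, which is the previous step. Moreover, on a homogeneous map a direct computation in polar coordinates shows that $N_\phi(\cdot,r)$ is independent of $r$ and equal to the degree, so $N_\phi(\cdot;0^+)$ and $N(\cdot;0^+)$ agree at every point; hence nothing in the argument is sensitive to the replacement of $N$ by $N_\phi$. Concretely, I would argue by contradiction: if \eqref{frequencyperturbation} failed for some $\epsilon>0$, then for each $k$ there would be a nonconstant homogeneous minimizer $v_k$ with $N(0;0^+)\le\alpha$, a point $b_k$ with $b_k/|b_k|$ within $1/k$ of some $a_k\in v_k^{-1}\{0\}\cap\mathbf{S}^2$, and $r_k\in(0,|b_k|/k]$ with $N_\phi^{v_k}(b_k,r_k) > \tfrac{1}{2\sqrt{\kappa}}+\epsilon$; rescaling so that $|b_k|=1$ gives $r_k\to 0$ and, after passing to a subsequence and normalizing, say, $H_\phi(0,1)=1$, one gets $v_k\to v$, a nonconstant homogeneous minimizer, with $b_k\to a\in v^{-1}\{0\}\cap\mathbf{S}^2$. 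For any fixed small $t>0$, monotonicity gives $\tfrac{1}{2\sqrt{\kappa}}+\epsilon \le N_\phi^{v_k}(b_k,t)$ for $k$ large; letting $k\to\infty$ and then $t\to 0^+$ yields $N_\phi^v(a;0^+)\ge \tfrac{1}{2\sqrt{\kappa}}+\epsilon$, contradicting the previous step.

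The main obstacle is entirely the input from \cite{AHL}: the classification \cite[Lemma 3.1]{AHL} of two-variable homogeneous minimizers into $\mathbf{D}_\kappa$ and the consequent three-dimensional structure theory, on which all three assertions rest. Once those are granted, the only work specific to this article is the elementary observation that passing from $N$ to $N_\phi$ changes neither the monotonicity nor the blow-up values, so every compactness argument from \cite{AHL} carries over unchanged.
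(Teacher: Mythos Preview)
Your proposal is correct and matches the paper's approach: the paper does not give an independent proof but simply records that the lemma is a restatement of \cite[Lemma~3.4]{AHL}, proved there by successive compactness and topological arguments, with the sole modification being the replacement of the classical frequency $N$ by the smoothed $N_\phi$ in \eqref{frequencyperturbation}, which goes through because that bound is obtained via compactness. The only item the paper cites that you do not mention explicitly is Lemma~\ref{UniqueContinuation}, which the paper invokes alongside the compactness argument when transferring \eqref{frequencyperturbation} to $N_\phi$; this is a minor technical point and does not affect the substance of your outline.
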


The proof of Lemma \ref{3DHomogeneous} is based on successive compactness arguments, as well as a topological argument.
We remark that the original statement in \cite[Lemma 3.4]{AHL} is in terms of the classical frequency function $N$. 
However, since \eqref{frequencyperturbation} is proved via a compactness argument, 
combining the proof of \cite[Lemma 3.4]{AHL} with Lemma \ref{UniqueContinuation} immediately yields the claim.

Before stating the next lemma, we need to recall the definition of isolated and non-isolated defects of maps $u \, : \, \Omega \to \mathbf{D}_k$ minimizing the Dirichlet energy.

\begin{definition}
We denote $u^{-1} \{ 0 \} = \mathcal{Z}_0 \cup \mathcal{Z}_1$, where:
\begin{equation}
\mathcal{Z}_0 = \left \{ a \in u^{-1} \{ 0 \} \, : \, u_{\infty}^{-1} \{ 0 \} \cap \mathbf{S}^2 = \emptyset \; \mathrm{for} \; \mathrm{every} \; \mathrm{tangent} \; \mathrm{map}
\; u_{\infty} \; \mathrm{at} \; a \right \},
\end{equation}
and
$\mathcal{Z}_1 = u^{-1} \{ 0 \} \backslash \mathcal{Z}_0$. 
\end{definition}

The main application of Lemma \ref{3DHomogeneous} is the following result on the defect set of energy-minimizing maps, which are not necessarily homogeneous.

\begin{lemma} \label{nonisolated2}
For any map 
$u \, : \, B_{64}(0) \to \mathbf{D}_k$ minimizing the Dirichlet energy
with $0 \in \mathcal{Z}_1$ 
and for every $\epsilon > 0$, there exists an
$R = R \left ( \epsilon, N \left ( 0; 0^+ \right ) \right ) > 0$ and
positive $2m \leq K_0 = 
K_0 \left ( N \left ( 0; 0^+ \right ) \right ) $, so that
for each $r \in (0, R ]$ there is a corresponding
homogeneous energy minimizing map $v_r$ such that
$v_r^{-1} \{ 0  \} \cap \mathbf{S}^2$ has
exactly $2m$ points, and for
\begin{equation*}
u_r(x) = \left ( \fint_{ B_r(0)} |u|^2 \, \mathrm{d}A \right )^{-1/2} u(rx),
\end{equation*}
there holds:
\begin{equation}
\left \| u_r - v_r \right \|_{H^1 \left ( B_1(0) \right ) }  \leq \epsilon.
\label{nonisolatedE2a}
\end{equation}
Moreover, the following inclusions hold:
\begin{equation}
\left ( \overline{B_r(0)} \backslash B_{r/2}(0) \right ) 
\cap u^{-1} \{0\} \subset
\left \{ x \; : \; \mathrm{dist} \left (x, v_r^{-1} \{ 0 \} \right )
< r \epsilon \right \},
\label{nonisolatedE2b}
\end{equation}
and
\begin{equation}
\left ( \overline{B_r(0)} \backslash B_{r/2}(0) \right )
\cap v_r^{-1} \{0\} \subset
\left \{ x \; : \; \mathrm{dist} \left (x, \mathcal{Z}_1 \right )
< r \epsilon \right \}.
\label{nonisolatedE2c}
\end{equation} 
\end{lemma}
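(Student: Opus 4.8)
The plan is to recall the argument of \cite{AHL}, which combines four ingredients: the monotonicity of the Almgren frequency (so that normalized rescalings at $0$ subconverge to homogeneous minimizers), the classification of three-dimensional homogeneous minimizers in Lemma \ref{3DHomogeneous}, the compactness of energy minimizers into $\mathbf{D}_k$ together with $\epsilon$-regularity (\cite[Lemma 2.7]{AHL}, \cite[Lemma 2.8]{AHL}), and the topological persistence of zeros (\cite[Lemma 4.2]{AHL}).

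First I would fix the vanishing order $\alpha = N(0;0^+)$, a well-defined positive number since $0 \in \mathcal{Z}_1 \subset u^{-1}\{0\}$ and the frequency is monotone (cf. \cite{Lin91}). For any sequence $r_j \downarrow 0$ the rescalings $u_{r_j}$ in the statement satisfy $\fint_{B_1(0)}|u_{r_j}|^2 = 1$ and have Dirichlet energies on $B_1(0)$ bounded uniformly in $j$, by the frequency bound and the doubling of the height; hence by \cite[Lemma 2.7]{AHL} a subsequence converges strongly in $H^1_{\mathrm{loc}}(\mathbf{R}^3)$ — in particular in $H^1(B_1(0))$ — to a non-constant energy minimizer $u_0$, which the usual argument (the limiting frequency is constant, by strong convergence) shows is homogeneous of degree $\alpha$. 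Applying Lemma \ref{3DHomogeneous} to $u_0$, its zero set meets $\mathbf{S}^2$ in $2m$ points that are $d_0(\alpha)$-separated, with $2 \le 2m \le K_0(\alpha)$, where the lower bound holds because $0 \in \mathcal{Z}_1$ lets us choose $u_0$ among the tangent maps at $0$ that do vanish somewhere on $\mathbf{S}^2$. The same compactness shows that for $r$ small $u_r$ is $H^1(B_1(0))$-close to a homogeneous minimizer — otherwise some sequence $u_{r_j}$ would stay a fixed distance away, contradicting the existence of $u_0$. This yields, for $r \le R(\epsilon,\alpha)$, a homogeneous minimizer $v_r$ with $\|u_r - v_r\|_{H^1(B_1(0))} \le \epsilon$, which is \eqref{nonisolatedE2a}.

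Next I would transfer the information from $v_r$ to $u^{-1}\{0\}$ on the annulus, in both directions. Away from $v_r^{-1}\{0\}$ the homogeneous map $v_r$ has $|v_r|$ bounded below on $\overline{B_1(0)}\backslash B_{1/2}(0)$; since $u_r \to v_r$ strongly in $H^1$, hence uniformly on the annulus by $\epsilon$-regularity \cite[Lemma 2.8]{AHL}, $|u_r|$ is bounded below there as well once $\epsilon$ is small, and rescaling gives \eqref{nonisolatedE2b}. Conversely, near each of the $2m$ rays of $v_r^{-1}\{0\}$ the asymptotic estimate \eqref{asymptoticEstimate} shows that $v_r$ looks, at the relevant scale, like a two-dimensional minimizer composed with a projection; the topological persistence of zeros (\cite[Lemma 4.2]{AHL}), applied to $u_r$, which is uniformly close to $v_r$, then forces a zero of $u$ to lie within $r\epsilon$ of that ray, and blowing $u$ up at the corresponding point produces a cylindrical tangent map by the classification \cite[Lemma 3.1]{AHL}, so that this zero belongs to $\mathcal{Z}_1$ — this is \eqref{nonisolatedE2c}. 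Finally, the integer $2m$ is independent of $r \in (0,R]$: by \eqref{nonisolatedE2b}, \eqref{nonisolatedE2c} and the uniform separation $d_0(\alpha)$, the number of points of $u^{-1}\{0\}$ on $\partial B_r(0)$ is a locally constant function of $r$ (pass from scale $r$ to scale $2r$ using that $u_r$ and $u_{2r}$ are uniformly close on the overlapping annulus and both close to the corresponding homogeneous minimizers), hence constant, and it equals the count for $u_0$.

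The step I expect to be the main obstacle is the second direction of the previous paragraph: passing from $H^1$-closeness of $u_r$ and $v_r$ to the assertion that \emph{every} ray of $v_r^{-1}\{0\}$ is shadowed by an actual zero of $u$, and that the number of such zeros on each small sphere is exactly $2m$ rather than merely at most $2m$. The "no spurious zeros" half is soft, but the "no zero is lost" half is precisely where the fine structure of two-dimensional minimizers from \cite{HardtLin93} and the no-holes property \cite[Lemma 4.2]{AHL} are indispensable, and it is what pins down the exact count and the constancy of $2m$.
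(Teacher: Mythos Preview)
Your proposal is correct and follows exactly the approach the paper indicates: the paper does not prove this lemma in full but states that \eqref{nonisolatedE2a} and \eqref{nonisolatedE2b} are obtained by compactness while \eqref{nonisolatedE2c} requires a topological argument, referring to \cite[Lemma 4.3]{AHL} for the details; your outline supplies precisely these ingredients (frequency monotonicity and strong $H^1$ compactness for \eqref{nonisolatedE2a}, $\epsilon$-regularity/uniform convergence for \eqref{nonisolatedE2b}, and the no-holes lemma \cite[Lemma 4.2]{AHL} for \eqref{nonisolatedE2c}), and you correctly flag the persistence-of-zeros step as the substantive one.
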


The proofs of \eqref{nonisolatedE2a} and \eqref{nonisolatedE2b} are based on compactness, while \eqref{nonisolatedE2c} is proved via a topological argument.
We refer to \cite[Lemma 4.3]{AHL} for the details. An important consequence of Lemma \ref{nonisolated2} is the following corollary:

\begin{corollary} \label{discretezeros}
For any map $u \, : \, \Omega \to \mathbf{D}_k$ minimizing the Dirichlet energy and any compact $K \subset \Omega$, $\mathcal{Z}_0 \cap K$ is a discrete set.
\end{corollary}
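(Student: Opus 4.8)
The plan is to establish two facts and then combine them. First, every point of $\mathcal{Z}_0$ is isolated in the full zero set $u^{-1}\{0\}$; equivalently, $\mathcal{Z}_0$ is exactly the set of isolated points of $u^{-1}\{0\}$ and $\mathcal{Z}_1$ is its derived set, hence $\mathcal{Z}_1$ is relatively closed in $\Omega$. Second, no point of $\mathcal{Z}_1$ is an accumulation point of $\mathcal{Z}_0$. Since $u^{-1}\{0\}$ is relatively closed in $\Omega$ and equals $\mathcal{Z}_0\cup\mathcal{Z}_1$, these two facts together say $\mathcal{Z}_0$ has no accumulation point in $\Omega$, so it is locally finite; intersecting with a compact $K$ then gives the claim.

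\emph{First fact.} Suppose some $a\in\mathcal{Z}_0$ is not isolated in $u^{-1}\{0\}$, so there are $a_j\in u^{-1}\{0\}\setminus\{a\}$ with $r_j:=|a_j-a|\to 0$. Consider the normalized rescalings $u_{a,r_j}(x)=\bigl(\fint_{B_{r_j}(a)}|u|^2\bigr)^{-1/2}u(a+r_jx)$. By the compactness of energy minimizers into $\mathbf{D}_\kappa$ (\cite[Lemma 2.7]{AHL}), the uniform interior H\"older bounds (\cite[Lemma 2.8]{AHL}), and the monotonicity of the Almgren frequency (which forces any blow-up limit to be homogeneous), a subsequence converges strongly in $H^1_{\mathrm{loc}}$ and locally uniformly to a tangent map $u_\infty$ at $a$, a non-constant homogeneous minimizer. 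Writing $\xi_j=(a_j-a)/r_j\in\mathbf{S}^2$ and passing to a further subsequence with $\xi_j\to\xi\in\mathbf{S}^2$, local uniform convergence gives $u_\infty(\xi)=\lim_j u_{a,r_j}(\xi_j)=0$, since $u_{a,r_j}(\xi_j)$ is a positive multiple of $u(a_j)=0$. Thus $u_\infty^{-1}\{0\}\cap\mathbf{S}^2\neq\emptyset$, contradicting $a\in\mathcal{Z}_0$.

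\emph{Second fact.} Fix $a\in\mathcal{Z}_1$ (placed at the origin) and suppose, for contradiction, that $a_j\in\mathcal{Z}_0$ with $a_j\to a$. Apply Lemma \ref{nonisolated2} at $a$ with a small tolerance $\epsilon$, to be fixed below: for each $r$ below the resulting threshold we get a homogeneous minimizer $v_r$ whose zero set is a union of $2m\le K_0$ rays, satisfying \eqref{nonisolatedE2a} and the inclusion \eqref{nonisolatedE2b}. For $j$ large pick $r_j$ with $|a_j-a|\in(r_j/2,r_j]$ below the threshold; after rescaling to unit scale, the zero $\tilde a_j$ of $u_{r_j}$ lies in the annulus $\{1/2<|x|\le 1\}$ and, by \eqref{nonisolatedE2b}, within $\epsilon$ of a non-vertex point of $v_{r_j}^{-1}\{0\}$, so $\tilde a_j$ is within angular distance $C\epsilon$ of one of the ray directions of $v_{r_j}$. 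Applying the frequency perturbation estimate \eqref{frequencyperturbation} of Lemma \ref{3DHomogeneous} to $v_{r_j}$ (with uniform constants, since $N_\phi^{v_{r_j}}(0;0^+)\le\overline N$) gives, once $\epsilon$ is small, $N_\phi^{v_{r_j}}(\tilde a_j,\rho)\le\tfrac{1}{2\sqrt\kappa}+\epsilon$ for a fixed scale $\rho>0$. Using the strong $H^1$ proximity \eqref{nonisolatedE2a} (applying Lemma \ref{nonisolated2} at a fixed multiple of $r_j$ if needed, so that $B_\rho(\tilde a_j)$ lies in the region of control) together with the variational identities of Lemma \ref{VarIdentities} to compare the smoothed Dirichlet and height integrals at scale $\rho$ — the height being bounded below — we transfer this to $N_\phi^{u_{r_j}}(\tilde a_j,\rho)\le\tfrac{1}{2\sqrt\kappa}+\omega(\epsilon)$ with $\omega(\epsilon)\to 0$. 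Unscaling and using the monotonicity \eqref{FreqDerivScale} then yields that the vanishing order $N_\phi(a_j;0^+)$ is at most $\tfrac{1}{2\sqrt\kappa}+\omega(\epsilon)$. Finally, a short compactness argument, based on the classification of homogeneous minimizers near the ground-state frequency (\cite[Lemma 3.1]{AHL}, \cite[Corollary 3.6]{AHL}) and the topological persistence of zeros (\cite[Lemma 4.2]{AHL}), shows that a non-constant homogeneous minimizer whose zero set is only $\{0\}$ has vanishing order at least $\tfrac{1}{2\sqrt\kappa}+c_0$ for some $c_0(\kappa)>0$; equivalently, vanishing order within $c_0$ of $\tfrac{1}{2\sqrt\kappa}$ forces a cylindrical tangent map, hence a \emph{non-isolated} zero. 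Choosing $\epsilon$ so small that $\omega(\epsilon)<c_0$, the bound above forces $a_j\in\mathcal{Z}_1$, contradicting $a_j\in\mathcal{Z}_0$.

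The routine step is the first fact (a standard blow-up). The main obstacle is in the second: transporting the frequency-perturbation bound from the homogeneous comparison map $v_{r_j}$ to $u$ near $a_j$ in a quantitative way — this requires the continuity of the smoothed frequency $N_\phi(\cdot,\rho)$ under the strong $H^1$ convergence of Lemma \ref{nonisolated2}, including a uniform positive lower bound on $H_\phi$ at the comparison scale — and the spectral gap separating cylindrical (line-defect) homogeneous minimizers from homogeneous minimizers with an isolated zero, which is what converts the mild bound $N_\phi(a_j;0^+)\le\tfrac{1}{2\sqrt\kappa}+\omega(\epsilon)$ into the contradiction.
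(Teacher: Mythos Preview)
Your proof is correct and follows the same broad approach that the paper points to: the result is quoted from \cite[Corollary 4.4]{AHL} and described there only as ``based on Lemma \ref{nonisolated2} and a contradiction argument,'' which is precisely the structure of your Second fact. Your First fact is the standard blow-up showing that points of $\mathcal{Z}_0$ are isolated in $u^{-1}\{0\}$; your Second fact applies Lemma \ref{nonisolated2} at a putative accumulation point $a\in\mathcal{Z}_1$, uses Lemma \ref{3DHomogeneous} to control the frequency of the approximating homogeneous map near the relevant ray, and transfers this bound to $u$ via \eqref{nonisolatedE2a}. That frequency-transfer step is exactly the manoeuvre carried out in \eqref{FirstStepEstimate} in the proof of Theorem \ref{LocalFiniteLength}, so it is fully justified within the paper.

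One small overclaim worth tightening: in the last sentence of the Second fact you say that vanishing order within $c_0$ of $\tfrac{1}{2\sqrt\kappa}$ ``forces a cylindrical tangent map.'' What your compactness/spectral-gap argument actually delivers is only that some tangent map at $a_j$ has a zero on $\mathbf{S}^2$, i.e.\ $a_j\in\mathcal{Z}_1$; cylindricality would require the vanishing order to equal $\tfrac{1}{2\sqrt\kappa}$ exactly (cf.\ \cite[Corollary 3.6]{AHL}). Since membership in $\mathcal{Z}_1$ is all you need for the contradiction, this does not affect the validity of the argument.
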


We refer to \cite[Corollary 4.4]{AHL} for the proof based on Lemma \ref{nonisolated2} and a contradiction argument, and recall the following conditional structure
result for the defect set of maps minimizing the Dirichlet energy.

\begin{lemma} \label{StructureLemma}
For every $\epsilon > 0$, there exists a $\delta_0 = \delta_0 ( \epsilon )$ such that if $u \, : \, B_{64}(0) \to \mathbf{D}_k$ is 
a map minimizing the Dirichlet energy and satisfying:
\begin{equation*}
\left ( \overline{B_1(0)} \backslash B_{1/2}(0) \right ) \cap u^{-1} \{0\} \neq \emptyset, \quad \mathrm{and}
\end{equation*}
\begin{equation}
N_\phi(0;2) < \frac{1}{2 \sqrt{k}} + \delta_0,
\label{PerturbativeHypothesis}
\end{equation}
then for each $b \in B_1(0) \cap \mathcal{Z}_1$, $0<r \leq 1/2$, there exists $L_r^b$, a line passing through $b$, such that the following hold:
\begin{equation}
\overline{B_r(b)} \cap u^{-1} \{ 0\} \subset \left \{ x \, : \, \mathrm{dist} \left ( x, \mathcal{Z}_1 \right ) < r \epsilon \right \}, \quad \mathrm{and}
\label{StructureInclusion1}
\end{equation}
\begin{equation}
\overline{B_r(b)} \cap L_r^b \subset \left \{ x \, : \, \mathrm{dist} \left ( x, \mathcal{Z}_1 \right ) < r \epsilon \right \}.
\label{StructureInclusion2}
\end{equation}
Furthermore, there holds
\begin{equation}
B_{1/2}(0) \cap \mathcal{Z}_1 \subset \Gamma \subset B_1(0) \cap \mathcal{Z}_1,
\label{StructureInclusion3}
\end{equation}
for a single embedded H\"older continuous arc $\Gamma$. 
\end{lemma}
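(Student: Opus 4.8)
The plan is to reproduce the argument of \cite{AHL}, which rests on three ingredients: a compactness reduction by which the hypothesis \eqref{PerturbativeHypothesis} is seen to force a limiting cylindrical structure; the blow-up approximation of Lemma \ref{nonisolated2} together with the homogeneous classification of Lemma \ref{3DHomogeneous}, which upgrade this into the scale-by-scale line approximations \eqref{StructureInclusion1}--\eqref{StructureInclusion2}; and Reifenberg's Topological Disk Theorem \cite{Reifenberg60} together with the no-holes property \cite[Lemma 4.2]{AHL}, which extract from those inclusions the single embedded arc in \eqref{StructureInclusion3}.

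The first step is a compactness argument showing that \eqref{PerturbativeHypothesis} propagates to uniform frequency control near the non-isolated defects: for every $\epsilon_1 > 0$ there is $\delta_0 = \delta_0(\epsilon_1)$ so that \eqref{PerturbativeHypothesis} forces $N_\phi(b,\rho) < \frac{1}{2\sqrt k} + \epsilon_1$ for all $b \in B_1(0) \cap \mathcal{Z}_1$ and all $\rho \in (0,1]$, and forces every $\mathcal{Z}_0$-point in $B_1(0)$ to lie within $\epsilon$ of $\mathcal{Z}_1$ at the relevant scale. If this failed one would obtain energy minimizers $u_j$ satisfying the non-emptiness hypothesis with $N_{\phi,j}(0,2) \le \frac{1}{2\sqrt k} + 1/j$, converging, after normalizing $H_\phi$ so the limit is nontrivial, strongly in $H^1_{\mathrm{loc}}$ and uniformly on compacta (\cite[Lemma 2.7]{AHL}) to a minimizer $u_\infty$ with $N_{\phi,\infty}(0,2) \le \frac{1}{2\sqrt k}$; combined with the lower bound \eqref{FrequencyLowerBound}, the equality case of \eqref{FreqDerivScale}, and \cite[Corollary 3.6]{AHL}, this forces $u_\infty$ to be a cylindrical homogeneous map of degree $\frac{1}{2\sqrt k}$ whose zero set is a line, along which the frequency is identically $\frac{1}{2\sqrt k}$. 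Passing the offending points and scales to the limit (using Lemma \ref{UniqueContinuation} and the convergence of $N_{\phi,j}$) contradicts the assumed failure, and the same limit rules out $\mathcal{Z}_0$-points accumulating near $\mathcal{Z}_1$, since by the classification in \cite[Lemma 3.1]{AHL} such points carry vanishing order strictly above $\frac{1}{2\sqrt k}$.

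Given this frequency control I would apply Lemma \ref{nonisolated2} at each $b \in B_1(0) \cap \mathcal{Z}_1$ and each dyadic scale $\rho = r, r/2, r/4, \dots \le 1/2$, with its small parameter chosen comparable to $\epsilon$; the bound $N_\phi(b,\cdot) < \frac{1}{2\sqrt k} + \epsilon_1$ and the energy gap above $\frac{1}{2\sqrt k}$ force the homogeneous comparison map $v_\rho^b$ to be cylindrical, i.e. $2m = 2$, so that $(v_\rho^b)^{-1}\{0\}$ is a line $L_\rho^b$ through $b$. Then \eqref{nonisolatedE2b} and \eqref{nonisolatedE2c} give, on each dyadic annulus about $b$, that $u^{-1}\{0\}$ lies in an $\epsilon\rho$-tube about $L_\rho^b$ and that $L_\rho^b$ lies in an $\epsilon\rho$-tube about $\mathcal{Z}_1$; since the zero set is close to each $L_\rho^b$ the lines at comparable scales are mutually close, so summing the dyadic annuli and adjoining the center $b$ yields \eqref{StructureInclusion1} and \eqref{StructureInclusion2} with $L_r^b$ a fixed line through $b$ (a covering of $\overline{B_r(b)}$ by balls $\overline{B_R(b_i)}$, $b_i \in \mathcal{Z}_1$, disposes of the finitely many scales $\rho > R(\epsilon, N(b;0^+))$).

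Finally, \eqref{StructureInclusion1}--\eqref{StructureInclusion2} assert exactly that $\mathcal{Z}_1 \cap B_{1/2}(0)$ is, at every point and every scale $\le 1/2$, both contained in and $\epsilon$-dense in a line, i.e. it satisfies the Reifenberg flatness hypothesis with parameter $\epsilon$; choosing $\epsilon$, hence $\delta_0$, below the Reifenberg threshold, \cite{Reifenberg60} produces a bi-Hölder homeomorphism from a segment onto $\mathcal{Z}_1 \cap B_{1/2}(0)$, and combined with the no-holes property \cite[Lemma 4.2]{AHL}, which forbids branch points and interior endpoints, this exhibits $B_{1/2}(0) \cap \mathcal{Z}_1$ inside a single embedded Hölder arc $\Gamma \subset B_1(0) \cap \mathcal{Z}_1$. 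I expect the main obstacle to be precisely this last, genuinely topological step: verifying the Reifenberg hypotheses uniformly in the base point and invoking the no-holes property correctly to conclude that $\Gamma$ neither branches nor terminates inside $B_{1/2}(0)$. The secondary delicate point is pinning $2m = 2$, which relies essentially on the classification and energy gap of \cite[Lemma 3.1]{AHL} and \cite[Corollary 3.6]{AHL}.
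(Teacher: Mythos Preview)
Your proposal is correct and follows essentially the same route the paper outlines: the paper does not reprove this lemma but refers to \cite[Lemma 5.1]{AHL}, noting that \eqref{PerturbativeHypothesis} enters through a compactness argument, that \eqref{StructureInclusion1}--\eqref{StructureInclusion2} come from an iterative application of the cylindrical ($2m=2$) case of Lemma \ref{3DHomogeneous}, and that \eqref{StructureInclusion3} is then obtained via Reifenberg's Topological Disk Theorem. Your three-step sketch (compactness frequency propagation, dyadic approximation by cylindrical tangent maps, Reifenberg plus no-holes) matches this description precisely.
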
 

While we refer to \cite[Lemma 5.1]{AHL} for a proof, several remarks are in order. 
As in Lemma \ref{3DHomogeneous}, replacing the classical frequency function in the original statement in \cite[Lemma 5.1]{AHL} with the smoothed frequency
is not an issue, as \eqref{PerturbativeHypothesis} is used in a compactness argument.
We also note that \eqref{StructureInclusion1} and \eqref{StructureInclusion2} are analogous to \eqref{nonisolatedE2b} and \eqref{nonisolatedE2c} in Lemma \ref{nonisolated2}.
However, \eqref{StructureInclusion1} and \eqref{StructureInclusion2} hold at every scale $r \in (0,1/2]$, 
which is proved by an iterative argument and a special case of Lemma \ref{3DHomogeneous}.
Finally, \eqref{StructureInclusion3} follows from \eqref{StructureInclusion1} and \eqref{StructureInclusion2} via Reifenberg's Topological Disk Theorem, cf. \cite{Reifenberg60}.

\section{Proof of the Main Result} \label{FinalSection}

Firstly, we prove Theorem \ref{LocalFiniteLength}.

\begin{proof}[Proof of Theorem \ref{LocalFiniteLength}]
If $b$ is an isolated zero in $u^{-1} \{ 0 \}$, there is nothing to prove. 
Without loss of generality we assume that $b = 0$ is non-isolated, and the set of isolated zeros $\mathcal{Z}_0 = \emptyset$.
The latter is possible, since $\mathcal{Z}_0$ is a discrete set by Corollary \ref{discretezeros}. 
Theorem \ref{LocalFiniteLength} differs from \cite[Theorem 5.2]{AHL} by the additional measure estimate $\mathcal{H}^1 \left ( \Gamma_i \right ) < + \infty$ for each $i = 1, 2, ..., 2m$.
For the sake of completeness we recall the proof of \cite[Theorem 5.2]{AHL} first.

{\bf{Step 1:}}
For $\epsilon > 0$ to be determined, there exist an $R = R \left ( \epsilon, N_\phi \left ( 0, 0^+ \right ) \right ) > 0$ and an approximating homogeneous minimizer $v_r$
for each $r \in (0,R]$ by Lemma \ref{nonisolated2}. 
By \eqref{nonisolatedE2c}, for each $a \in v_r^{-1} \{0\} \cap \mathbf{S}^2$, there is at least a point: 
$$ b_{a,r} \in \partial B_{\frac{3r}{4}}(0) \cap B_{\epsilon r} \left ( \frac{3a}{4} \right ) \cap u^{-1} \{ 0 \}. $$
Before applying Lemma \ref{StructureLemma}, 
we firstly choose $\delta = \delta \left ( d_0 /16 \right )$ as in Lemma \ref{StructureLemma}, where $d_0$ is as in Lemma \ref{3DHomogeneous}, 
(and  $d_0$ hence depends on $N_\phi \left ( 0; 0^+ \right )$ only.)
Secondly, we choose $\beta = \beta \left ( d_0 /2, N_\phi \left ( 0, 0^+ \right ) \right )$ 
and $\gamma = \gamma \left (d_0 /2, N_\phi \left ( 0, 0^+ \right ) \right )$ as in Lemma \ref{3DHomogeneous}.
Finally, we choose $R = R \left ( \epsilon, N_\phi \left ( 0, 0^+ \right ) \right )$ for $\epsilon$ yet to be determined.
Hence, we can update all our parameters successively, based on our choice of $\epsilon$.
For such parameters, we obtain:
\begin{equation}
N_\phi \left ( b_{a,r}, 2 \gamma r  \right ) < N_{v_r, \phi} \left ( b_{a,r}, 2 \gamma r \right ) + \delta/2 <  \frac{1}{2 \sqrt{\kappa}} + \delta,
\label{FirstStepEstimate}
\end{equation}
where the first inequality follows from \eqref{nonisolatedE2a} for $\epsilon$ chosen sufficiently small, and the second inequality follows from Lemma \ref{3DHomogeneous},
as $\beta$ and $\gamma$ have been chosen sufficiently small with respect to $\delta$. 

{\bf{Step 2:}} Shrinking $\epsilon$ further so that it is much smaller than $\gamma/2$, we claim that the map $u_{a,r} = u \left ( b_{a,r} + \gamma r x \right )$ satisfies 
the hypothesis of Lemma \ref{StructureLemma}. 
Since we have already obtained \eqref{FirstStepEstimate}, there remains to check that:
\begin{equation}
\left ( \overline{ B_{r\gamma} \left ( b_{a,r} \right ) } \backslash B_{\frac{r\gamma}{2}} \left ( b_{a,r} \right ) \right ) \cap u^{-1} \{0\} \neq \emptyset.
\label{StepTwoVerification}
\end{equation}
This follows from the inclusion \eqref{nonisolatedE2c} and the observation that $B_{r \epsilon}(p)$ is contained in the annulus
$\overline{ B_{r \gamma} \left ( b_{a,r} \right ) } \backslash B_{\frac{r \gamma}{2}} \left ( b_{a,r} \right )$ for some $p \in v_r^{-1} \{0\}$,
since $\left | b_{a,r} - \frac{3}{4}a \right | < r \epsilon$.
Hence, applying Lemma \ref{StructureLemma}, by the choice $\delta = \delta \left ( d_0 /16 \right )$, we conclude:
\begin{equation*}
\overline{B_{\frac{r \gamma}{2}} \left ( b_{a,r} \right )} \cap u^{-1} \{0\} \subset \Gamma_{a,r} \subset B_{r\gamma} \left ( b_{a,r} \right ) \cap u^{-1} \{ 0 \}
\subset \left \{ x \, : \, \mathrm{dist} \left ( x, L_{a,r} \right ) < d_0 r \gamma /16 \right \},
\end{equation*}
for some embedded H\"older continuous arc $\Gamma_{a,r}$ and some line $L_{a,r}$ passing through $b_{a,r}$.

{\bf{Step 3:}} Once again recalling that $ \left | b_{a,r} - \frac{3}{4}a \right | < r \epsilon$, 
where $\epsilon$ has been chosen to be much smaller than $\gamma/2$, we obtain:
\begin{equation*}
\left ( \overline{B_{\frac{3}{4}r + \frac{1}{4}r\gamma }(0)} \backslash B_{\frac{3}{4}r - \frac{1}{4}r\gamma }(0) \right ) 
\cap \left \{ x \, : \, \mathrm{dist} \left ( x, v_r^{-1} \{ 0 \} \right ) < \epsilon r \right \} \subset \overline{B_{\frac{r\gamma}{2}} \left ( b_{a,r} \right )},
\end{equation*}
for each $a \in v_r^{-1} \{ 0 \} \cap \mathbf{S}^2$ and the corresponding $b_{a,r}$. Consequently:
\begin{equation*}
\left ( \overline{B_s(0)} \backslash B_{\lambda s}(0) \right ) \cap u^{-1} \{ 0 \} \subset \bigcup_{a \in v_r^{-1} \{ 0 \} } \Gamma_{a,r} \subset u^{-1} \{ 0 \},
\end{equation*}
where $s = \frac{3}{4}r + \frac{1}{4} r \gamma $ and $ \lambda = 1 - \frac{2 \gamma}{3+\gamma}$.
By the inclusion \eqref{nonisolatedE2c}, each arc $\Gamma_{a,r}$ intersects both the outer sphere $\partial B_s(0)$ and the inner sphere $\partial B_{\gamma s}(0)$.
Arguing as in Lemma \ref{nonisolated2} we infer that when $r$ is sufficiently small, $\Gamma_{a,r}$ overlaps with $\Gamma_{\bar{a}, \lambda r }$, when $\bar{a}$ is the nearest
point to $a$ in $v_{\lambda r}^{-1} \{ 0 \} \cap \mathbf{S}^2$. We finally note that $\Gamma_{a,r} \cup \Gamma_{\bar{a}, \lambda r}$ is clearly also a H\"older continuous arc.

{\bf{Step 4:}} Beginning with $r = R$, we iterate the argument in Steps 1, 2 and 3 with $r = R, \lambda R, \lambda^2 R$, ...  
Consequently, we construct chains of overlapping arcs starting with $\Gamma_{a,R}$ for each
$a \in v^{-1}_R \{ 0 \} \cap \mathbf{S}^2$, which we enumerate as $\Gamma_1$, $\Gamma_2$, ..., $\Gamma_{2m}$. 
Finally, we define $\mathcal{O}$ to be $B_{\frac{3R}{4}}(0)$. 

{\bf{Measure estimate:}}
There remains to prove that $\mathcal{H}^1 \left ( \Gamma_i \right ) < + \infty$ for each $i = 1, 2, ..., 2m$. 
The idea is to shrink $\delta$ in the above construction to check \eqref{SmallnessAssumption} in $B_{\tilde{r}} \left (b_{a,R} \right )$ for some $\tilde{r}$, and
estimate $\mathcal{H}^1 \left ( \Gamma_i \right )$ by an iteration similar to the one above. 
Updating the choice of $\delta$ in Step 1 with respect to Lemma \ref{ConditionalEstimate}, for $\gamma$ and $R$ as above, we have:
$$ N_\phi \left ( b_{a,R}, 2 \gamma R  \right ) < \frac{1}{2 \sqrt{\kappa}} + \delta, $$ 
for $b_{a,R} \in \Gamma_j$ for some $j \in \{1,2, ..., 2m \}$. 
Then by Lemma \ref{StructureLemma}, that is: 
by noting that for $\delta$ small enough $u$ can be approximated by a cylindrical map near $b_{a,R}$, and 
arguing as in the proof of the frequency estimate in Lemma \ref{3DHomogeneous} for general homogeneous minimizers, we have:
$$ N_\phi \left ( z, \gamma R \right ) < \frac{1}{2 \sqrt{\kappa}} + \delta, $$ 
for every $z \in u^{-1} \{ 0 \} \cap B_{\gamma R} \left ( b_{a,R}  \right )$. 

Hence, by Lemma \ref{ConditionalEstimate}:
\begin{equation}
\left | B_\rho \left ( u^{-1} \{0 \} \cap B_{\gamma R} \left ( b_{a,R} \right ) \right ) \right | \leq C \gamma R \rho^2, \quad \forall \rho \in (0, \gamma R],
\label{Iteration1}
\end{equation}
for an absolute constant $C$.
The Hausdorff measure estimate: 
$$ \mathcal{H}^1 \left ( u^{-1} \{ 0 \} \cap B_{\gamma R} \left ( b_{a,R} \right ) \right ) \leq \hat{C} \gamma R, $$ 
follows easily from \eqref{Iteration1}, cf. \cite[Proposition 3.3.3]{Mattila}. 
Moreover, by Step 2 above: $\Gamma_{a,r} \subset B_{\gamma R} \left ( b_{a,R} \right ) \cap u^{-1} \{ 0 \}$. 

For $\lambda$ as in Step 3, we iterate and obtain for $l \geq 1$:
$$ \mathcal{H}^1 \left ( u^{-1} \{ 0 \} \cap B_{\gamma \lambda^l R} \left ( b_{a_\lambda^l,\lambda^l R} \right ) \right ) \leq \hat{C} \gamma \lambda^l R,  $$
and therefore:
\begin{equation}
\mathcal{H}^1 \left ( \Gamma_j \right ) \leq C \cdot \left ( \sum_{l=0}^\infty \lambda^l \right) \cdot \gamma R 
= C \cdot \left ( \frac{3+\gamma}{2\gamma} \right ) \cdot \gamma R = C \cdot \left ( \frac{3+\gamma}{2} \right ) \cdot R.
\label{ArcLength}
\end{equation}
Finally, note that since $\gamma$ depends on $N \left ( 0, 0^+ \right )$ only,
$ \mathcal{H}^1 \left ( \Gamma_j \right ) \leq L $, where $L = L \left ( N \left ( 0, 0^+ \right ), \mathrm{diam}( \mathcal{O}) \right ) $.
\end{proof}

Finally, we are ready to prove the main result of this article.

\begin{proof}[Proof of Theorem \ref{Strengthened}]
Fix $K \subset \subset \Omega$. 
By Corollary \ref{discretezeros}, the set of isolated points in $u^{-1} \{0\} \cap K$ is discrete.
We can cover $u^{-1} \{ 0 \} \cap K$ by neighborhoods $\mathcal{O}$ as in Theorem \ref{LocalFiniteLength} and pass onto a finite subcover.
In particular, we conclude that, $u^{-1} \{ 0 \} \cap K$ consists of a finite union of isolated points and embedded H\"older continuous curves of finite length with finitely many crossings,
and $\mathcal{H}^1 \left ( u^{-1} \{ 0 \} \cap K \right )$ is bounded.
The fact that each H\"older continuous curve in $u^{-1} \{ 0 \} \cap K$ admits a Lipschitz parametrization follows from the $\mathcal{H}^1$-bound, cf. \cite[Theorem I.1.8]{DavidSemmes}.
In conclusion, $u^{-1} \{ 0 \} \cap K$ is rectifiable.
\end{proof}

\section*{Acknowledgment}
I would like to thank Professor Robert Hardt and my thesis advisor Professor Fang-Hua Lin for suggesting this problem and many helpful discussions. 
I would also like to thank the anonymous referee for a careful reading of an earlier version of this article and helpful comments.

\section*{Compliance with Ethical Standards}
The author has no potential conflicts of interest to declare.

\bibliography{bibext}
\bibliographystyle{amsplain}
\end{document}